\newcommand{\labitem}[2]{%
\def\@itemlabel{\textbf{#1}}
\item
\def\@currentlabel{#1}\label{#2}}
\newcounter{thecounter}
\numberwithin{thecounter}{section}
\newtheorem{lemma}[thecounter]{Lemma}
\newtheorem{proposition}[thecounter]{Proposition}
\newtheorem{theorem}[thecounter]{Theorem}
\newtheorem{corollary}[thecounter]{Corollary}
\newtheorem{conjecture}[thecounter]{Conjecture}
\theoremstyle{definition}
\newtheorem{defn}[thecounter]{Definition}
\DeclareMathOperator{\GL}{GL}
\DeclareMathOperator{\gl}{\mathfrak{gl}}
\DeclareMathOperator{\Aut}{Aut}
\DeclareMathOperator{\diag}{diag}
\DeclareMathOperator{\rank}{rank}
\DeclareMathOperator{\wts}{wts}
\DeclareMathOperator{\End}{End}
\DeclareMathOperator{\Exp}{Exp}
\DeclareMathOperator{\ad}{ad}
\DeclareMathOperator{\htt}{ht}
\DeclareMathOperator{\re}{{re}}
\DeclareMathOperator{\im}{{im}}
\DeclareMathOperator{\mult}{{mult}}
\DeclareMathOperator{\Span}{{span}}
\DeclareMathOperator*{\freeprod}{\raisebox{-0ex}{\scalebox{1.5}{$\Asterisk$}}}
\renewcommand{\a}{\alpha}
\renewcommand{\b}{\beta}
\newcommand{\g}{\gamma}
\newcommand{\Z}{{\mathbb Z}}
\newcommand{\C}{{\mathbb C}}
\newcommand{\N}{{\mathbb N}}
\newcommand{\Q}{{\mathbb Q}}
\newcommand{\fh}{{\mathfrak{h}}}
\newcommand{\fg}{{\mathfrak{g}}}
\newcommand{\fn}{{\mathfrak{n}}}
\newcommand{\Uhat}{{\widehat{U}}}
\newcommand{\Uhp}{{\widehat{U}^+}}
\renewcommand{\re}{{\text{\rm re}}}
\renewcommand{\im}{{\text{\rm im}}}
\newcommand{\fs}{\mathfrak{s}}
\newcommand{\n}{{\mathfrak{n}}}
\newcommand{\la}{\langle}
\newcommand{\ra}{\rangle}
\newcommand{\wt}{\widetilde}
\newcommand{\wh}{\widehat}
\begin{document}

\title{\bf{Prosummability in Kac--Moody groups}}

\date{\today}

\begin{abstract} Let $\mathfrak{g}$ be a symmetrizable Kac--Moody algebra. 
We describe  {standard graded} $\mathfrak{g}$-modules $V$, which we use to construct a completion $\widehat{V}$ and pro-unipotent group $\widehat{U}$ in $\GL(\widehat{V})$.
These standard graded modules include the adjoint module, all integrable modules, Category~$\mathcal{O}$ modules, and opposite Category~$\mathcal{O}$ modules.  
We prove that the elements of $\widehat{U}$ are pro-summable series, that is, they are projective limits of summable series on quotients $\widehat{V}/\prod_{j=k}^\infty{V}_j$, for each $k>0$.  
We give an explicit construction of root subalgebras and their completions, corresponding to every root including the imaginary roots. 
We also construct complete root groups for imaginary roots, whose elements are also pro-summable series acting on $\widehat{V}$. 
We show that these groups are isomorphic to groups of power series in variables corresponding to basis elements for the imaginary root space. 
\end{abstract}

\author[Abid Ali]{Abid Ali}
\address{Department of Mathematics and Statistics, University of Saskatchewan
Saskatoon, SK S7N5E6, Canada }
\email{abid.ali@usask.ca}

\author[Lisa Carbone]{Lisa Carbone}
\address{Department of Mathematics, Rutgers University, Piscataway, NJ 08854-8019, USA}
\email{lisa.carbone@rutgers.edu\footnote{Corresponding author}}

\author[Elizabeth Jurisich]{Elizabeth Jurisich}
\address{School of Sciences, Mathematics and Engineering - College of Charleston, Charleston, SC 29403}
\email{jurisiche@cofc.edu}

\author[Scott H. Murray]{Scott H. Murray}
\address{Department of Mathematics, Rutgers University, Piscataway, NJ 08854-8019, USA}
\email{scotthmurray@gmail.com}

\thanks{{\bf Keywords:}  Kac--Moody algebra, Kac--Moody group}
\thanks{The first author's research is partially supported by the Simons Foundation, Mathematics and Physical Sciences-Collaboration Grants for Mathematicians, Award Number 961267}
\maketitle
%%%%%%%%%%%%%%%%%%%%%%%%%%%%%%%%%%%%%%%%%%%%%%%%%%%%%%

%%%%%%%%%%%%%%%%%%%%%%%%%%%%%%%%%%%%%%%%%%%%%%%%%%%%%%
%%%%%%%%%%%%%%%%%%%%%%%%%%%%%%%%%%%%%%%%%%%%%%%%%%%%%%
%\tableofcontents
\section{Introduction}
Kac--Moody groups are Lie group analogs of  Kac--Moody algebras. 
There is an established theory of these groups (see, for example, \cite{MarT}, \cite{TitsSurvey}).
This theory is most developed in the affine case \cite{Garland}). 
Most constructions realize  Kac--Moody groups as automorphisms of the associated Kac--Moody algebras. 
Until recently, many constructions of Kac--Moody groups contained no explicit contribution from the imaginary root vectors (that is, the root vectors of the Lie algebra with  nonpositive squared norm). 
%There is a fundamental obstruction to doing so.  

In order to describe the fundamental obstruction to describing imaginary root groups, we review the notion of \emph{summability}.  
An infinite sum $\sum_n E_n$ of operators on some space $V$ is called \emph{summable} if $\sum_n E_n(y)$ reduces to a finite sum for all $y\in V$ \cite{Bourbaki1-3,LL}.
Recall that, for a finite-dimensional semisimple Lie algebra $\frak g$, the simple root vectors $e_i$ and $f_i$ are $\ad$-nilpotent, that is, there exists $n$ such that
$(\ad(e_i))^{n}=(\ad(f_i))^{n}=0$.
Thus, 
$$\chi_{\a_i}(u):=\exp(u\ad(e_i))\quad\text{and}\quad\chi_{\a_i}(u):=\exp(u\ad(e_i))$$
are finite sums for $u\in\C$, where $\exp(X):
=1+t X+\frac{t^2}{2}X^2+\cdots$ as usual.
These elements generate the adjoint group $G_{\ad}$, whose elements are $\fg$-automorphisms.
Now consider $\frak g$ a symmetrizable Kac--Moody algebra. 
The simple root vectors $e_i$ and $f_i$ are \emph{locally $\ad$-nilpotent}, that is,
for all $y\in  \mathfrak{g}$ there exists $n=n(y)$  such that
$(\ad(e_i))^{n}(y)=(\ad(f_i))^{n}(y)=0$.
This means that $\chi_{\a_i}(u)$ and $\chi_{-\a_i}(u)$ are summable for $u\in\C$. 
This again ensures that
$\chi_{\pm\a_i}(u)$
can be viewed as an element of $\Aut(\mathfrak{g})$, and the group they generate is an adjoint Kac-Moody group $G_{\ad}$.
Given a representation $\rho$ acting on an integrable highest-weight $\fg$-module $V$, 
we can similarly define the Kac--Moody analog of a Chevalley group $G_V$ \cite{CG,Garland,KP}.

%Recall that an infinite sum $\sum_n E_n$ of operators is called \emph{summable} if, for every $y$, the expression $\sum_n E_n(y)$ reduces to a finite sum (\cite{Bourbaki1-3}, \cite{LL}).

%Let $\mathfrak{g}$ be a Lie algebra, and let $x, y \in \mathfrak{g}$ and $u \in \mathbb{C}$. Define $\ad(x)(y) := [x, y]$, and set
%\[
%\exp(u\,\ad(x))(y) = y + u[x, y] + \frac{u^2}{2!}[x, [x, y]] + \cdots.
%\]
%When $\mathfrak{g}$ is a finite-dimensional semisimple Lie algebra, each element $x \in \mathfrak{g}$ acts nilpotently under the adjoint representation. Therefore, the exponential $\exp(u\,\ad(x))$ becomes a finite sum for any $x$, making it a summable operator. As a result, $\exp(u\,\ad(x))$ defines a well-defined automorphism of $\mathfrak{g}$, giving rise to the Chevalley group of adjoint type.

Obtaining a construction of Kac--Moody groups which fully incorporates the contribution from  imaginary root vectors has proved to be a challenging task.  
If $\a$ is a positive imaginary root with root vector $x_\alpha$, then $\exp\rho(x_\alpha)$ is not summable on the module~$V$.

In this work, we construct such Kac--Moody groups relative to a large  class of suitable modules. We work in the class of graded weight modules with a grading function $f:Q\to\Z$ satisfying conditions (F1)--(F4) of Lemma~\ref{L-f}, where $Q$ is the root lattice. We call such modules {\it standard graded modules}. This large class contains many representations $V$ of $\frak g$, including Category $\mathcal O$, opposite Category $\mathcal O$ and the adjoint representation. We recall that an irreducible highest weight module Category $\mathcal O$  $\frak g$-module is integrable if and only if the highest weight $\lambda$ is dominant integral (\cite{Kac74}, Section 10.1). Thus Category $\mathcal O$ contains many non-integrable representations. Nevertheless, our methods allow us to construct Kac--Moody groups with respect to these representations. This is discussed in more detail in Section~\ref{S-rtgp}.

In order to generalize previous constructions of  Kac--Moody groups, our goal is to obtain a group construction which has explicit elements corresponding to  imaginary root vectors. Our approach is to build an ambient structure where infinite sums of the form $\exp\ad(x_\alpha)$, for $\a$ imaginary, make sense. Inspired by the approach in \cite{Ku}, we form the positive formal completion $\widehat{\frak g}$ of $\frak g$:
$$\widehat{\frak g} = \frak n^- \oplus \frak h \oplus \widehat{\frak n}^+$$
That is, we pass from the  direct sum ${\frak n}^+ := \oplus_{\a\in\Delta^+} \frak g_\a$, (where elements are finite sums), to the infinite direct product
$$\widehat{\frak n}^+ := \prod_{\a\in\Delta^+} \frak g_\a = \prod_{k\in\mathbb N} \frak g_k, $$
(where elements are infinite sums), 
for a compatible $\Z$-grading, where each $\frak g_k$ is  finite dimensional.

We recall that a group is \emph{pro-unipotent}, respectively  \emph{pro-nilpotent},  if it is a projective limit of finite dimensional unipotent algebraic groups, respectively of finite dimensional nilpotent algebraic groups. Similarly a pro-nilpotent Lie algebra is a projective limit of finite dimensional nilpotent Lie algebras (see \cite{Ku} for more details.)

 In Section~\ref{prounipgp}, we construct a complete pro-unipotent group $\widehat{U}$ of automorphisms of a completion $\widehat{V}$ of a standard graded module $V$ for $\mathfrak g$.
 For these modules, $\exp\rho(x_\alpha)$ is pro-summable  when $x_\alpha\in \frak g_\alpha$ is imaginary, in the sense of  \cite{CJM} (see Proposition~\ref{P-formulas}).
 That is, each $\exp\rho(x_\alpha)$ is  summable on quotients $\widehat{V}/\prod_{j=k}^\infty{V}_j$, for each $k>0$.

Rousseau \cite{Rou16} made the constructions of complete  Kac--Moody groups by Mathieu \cite{Mat} and  \cite{MarThesis} more explicit. The construction in \cite{Rou16} is similar to the present one.

In Section~\ref{S-rtgp}, we define the root algebra $\mathfrak n_\a$, corresponding to an imaginary root $\a$, to be the smallest subalgebra of $\mathfrak g$ containing the root space $\mathfrak g_\a$.  This definition differs from the definition in \cite{MarT},  which defines a larger root algebra.  We also define  the completion, $\widehat{\mathfrak n}_\a$, to be the smallest complete subalgebra of $\widehat{\mathfrak g}$ containing $\mathfrak g_\a$.

%{\color{blue} Check this: We   construct root groups  for imaginary roots and show that they are  free products of finitely many one-parameter groups (Theorem~\ref{T-Uhat}, (5)).}
 
We also define complete root groups $\wh{U}_\a$ in Section~\ref{S-rtgp} corresponding to  imaginary roots $\a$. We show that these are pro-unipotent groups of power series on a basis of the corresponding imaginary root space (see Theorem~\ref{T-Uhat}, (\ref{T-Uhat-free}) and that the elements of these groups are pro-summable series acting on $\widehat{V}$ (see Theorem~\ref{T-Uhat}, (\ref{CompleteProSum}).)  This definition  of $\wh{U}_\a$ differs from the imaginary root groups $\mathfrak{U}_{(\a)}$ constructed in \cite{MarT} in the following ways. 
Firstly, our group $\wh{U}_\a$ is defined relative to a general class of faithful representations, in contrast to the definition of  $\mathfrak{U}_{(\a)}$ in \cite{MarT} which is defined in terms of the adjoint representation. Secondly, our group $\wh{U}_\a$ is the smallest closed group containing $ \exp(\rho(\fg_{\a}))$ for each $\a$ (see Theorem~\ref{T-Uhat}), (3)). The group $\mathfrak{U}_{(\a)}$ in \cite{MarT} is larger as it contains contributions from integral multiples of the imaginary root $\a$. When $V$ is the adjoint representation, we have $\Uhat_\a\leq \mathfrak{U}_{(\a)}$ for each imaginary root $\a$. While our definitions are slightly different from those in  \cite{MarT} and \cite{Rou16}, the benefit of our approach is that we realize complete root groups for imaginary roots as Magnus groups (see also \cite{MarThesis}, Proposition 7.10 and \cite{MarT}, Example 8.89).  %they are elements of $\GL(\widehat{V})$ but not module automorphisms.

We then  define
completions 
$$\widehat{B}_V:= \langle H_V, \widehat{U}_V \rangle \quad\text{and}\quad \widehat{G}_V:= \langle U^{-}_V, \widehat{B}_V \rangle $$
where  $H_V$ is a group associated to the Cartan subalgebra and $U^-_V$ is generated by exponentials corresponding to the negatives of the simple root vectors.
The group $\wh{G}_V$ differs from most other Kac--Moody group constructions in that it contains exponentials of positive imaginary root vectors. This is not the case in earlier definitions of Kac--Moody groups in this generality (such as, for example \cite{TitsJAlg} and \cite{CG}). Contributions of group elements corresponding to the imaginary roots  for affine Kac--Moody algebras over $p$-adic fields, expressed in coordinate forms, can be found in \cite[Section 7]{Ga} and \cite[Section 4]{BKP}. Our group construction involves root vectors of imaginary roots directly and more explicitly in general symmetrizable Kac--Moody groups. Furthermore, our definition of group elements extends to other fields of interest, including non-archimedian local fields, without significant difficulty. When $V$ is the adjoint representation of $\mathfrak g$, $\wh{G}_V$ is similar to the group constructed by Rousseau \cite{Rou16}. As in \cite{Rou16} and \cite{GR}, we expect  $\wh{G}_V$ to have an associated building with Coxeter group the Weyl group of the Kac--Moody group (see \cite{Rou16}, Prop. 3.16, Cor. 3.18).  It would be interesting to understand the  the role of group elements corresponding to imaginary root vectors from a geometric point of view  in the context of Bruaht-Tits buildings and hovels.

The authors are grateful to  Timoth\'ee Marquis, Shrawan Kumar and Guy Rousseau for helpful comments and for explaining their work.

%Using the notion of prosummabillity, the natural pro-nilpotent structure of the completion of $\widehat{\mathfrak n}^+$ and the grading on $\widehat{\mathfrak g}$, we obtain a group of operators on the completion  $\widehat{\mathfrak g}$ of the Kac--Moody algebra $\mathfrak g$ which are finite sums on finite dimensional subspaces.}

%%%%%%%%%%%%%%%%%%%%%%%%%%%%%%%%%%%%%%%%%%
%%%%%%%%%%%%%%%%%%%%%%%%%%%%%%%%%%%%%%%%%%
\section{Kac--Moody algebras and Kac--Moody groups}\label{S-KMA}
In this section, we review the basic theory of Kac--Moody Lie algebras and Kac--Moody groups over the complex numbers $\C$. Our references for this section are \cite{Kac90}, \cite{Ku} and \cite{MP}.

%%%%%%%%%%%%%%%%%%%%%%%%%%%%%%%%%%%%%%%%%%
\subsection{Generalized Cartan matrices}\label{SS-Cartan}
Let $I=\{1,2,\dots,\ell\}$ be a finite index set.
A \emph{generalized Cartan matrix} is
an  $\ell\times \ell$ matrix $A=(a_{ij})_{i,j\in I}$ with entries in $\Z$ satisfying
\begin{align*}
\tag{C1}\label{Cdiag} a_{ii}&=2,\\
\tag{C2}\label{Coffdiag} a_{ij}&\le0\quad\text{if $i\ne j$},\\
\tag{C3}\label{Czeros} a_{ji}&=0\quad\text{if $a_{ij}=0$},
\end{align*}
for all $i,j\in I$. 
The generalized Cartan matrix $A$ is \emph{symmetrizable} if there exist positive rational numbers $d_1,\dots, d_{\ell}$, such that
the matrix $\diag(d_1,\dots, d_\ell) A$ is symmetric.

We assume throughout that $A$ is a symmetrizable generalized Cartan matrix.
Let $\frak{h}$ be a $\C$-vector space and let $\langle\circ,\circ\rangle: \frak{h}^*\times\frak{h}\to \C$ denote the natural pairing,
that is, $\langle \varphi, h\rangle:= \varphi(h)$ for $\varphi\in\fh^*$, $h\in\fh$. 
As in \cite{Kac90}, $(\fh,\Pi,\Pi^\vee)$ is a \emph{realization} of $A$ if
\begin{description}
\item[\rm(R1)]\label{Rdim} $\fh$ has dimension $n:= 2\ell-\rank(A)$,
\item[\rm(R2)]\label{Rindep} $\Pi=\{\alpha_1,\dots,\alpha_{\ell}\}\subseteq \frak{h}^*$ and
$\Pi^\vee=\{\alpha_1^\vee,\dots,\alpha_{\ell}^\vee\}\subseteq \frak{h}$ are  linearly independent sets,
\item[\rm(R3)]\label{Rcartan} $\langle\alpha_j,\alpha_i^{\vee}\rangle=\alpha_j(\alpha_i^{\vee})=a_{ij}$ for $i,j\in I$.
\end{description}
We call the elements $\a_i$ \emph{simple roots} 
and $\a^\vee_i$ \emph{simple coroots}.
Every symmetrizable generalized Cartan matrix has a realization~\cite{Kac90}.

%%%%%%%%%%%%%%%%%%%%%%%%%%%%%%%%%%%%%%%%%%
\subsection{Kac--Moody algebras}\label{SS-alg}
As in \cite{MP} and \cite[Theorem 9.11]{Kac90}, given a realization $(\fh,\Pi,\Pi^\vee)$,  the associated \emph{Kac--Moody algebra} $\frak{g}=\fg(A)$ is the Lie algebra over $\C$ with generating set  
$\frak{h}\cup\{e_i,f_i\mid i\in I\}$ and defining relations:
\begin{align*}
\tag{L1}\label{Lhh} [h,h']&=0;\\
\tag{L2}\label{Lhef} [h,e_i]&=\langle\alpha_i,h\rangle e_i;&
[h,f_i]&=-\langle\alpha_i,h\rangle f_i;\\
\tag{L3}\label{Lef} [e_i,f_i]&=\alpha_i^{\vee};&
[e_i,f_j]&=0;\\
\tag{L4}\label{Lad} (\ad e_i)^{-a_{ij}+1}(e_j)&=0;&
(\ad f_i)^{-a_{ij}+1}(f_j)&=0;
\end{align*}
for $h,h'\in\frak{h}$, and $i,j\in I$.

We say that $A$ is of \emph{finite type} if $A$ is positive definite, and that $A$ is of {\it affine type} if $A$ is positive semidefinite but not positive definite. If $A$ is not of finite or affine type, we say that $A$ is of \emph{indefinite type}.  
In particular, $A$ is of \emph{hyperbolic type} if $A$ is indefinite type but every proper indecomposable submatrix is either of finite or of affine type.

%%%%%%%%%%%%%%%%%%%%%%%%%%%%%%%%%%%%%%%%%%
\subsection{Roots and the Weyl group}\label{RWG}
The \emph{roots}  of $\frak{g}$ are the nonzero  
$\a\in\fh^*$ for which the corresponding \emph{root space}
$$\frak{g}_{\alpha} := \{x\in \frak{g}\mid[h,x]=\alpha(h)x\text{ for all $h\in \frak{h}$}\}$$
is non-zero. 
Note that $0\in\fh^*$ is not considered a root, but $\fg_0=\fh$.
The simple roots $\a_i$ and their negatives have root spaces $\frak{g}_{\a_i}=\C e_i$ and $\frak{g}_{-\a_i}=\C f_i$.  
Every root $\alpha$ can be written in the form $\alpha=\sum_{i=1}^{\ell} k_i\alpha_i$ where the $k_i$ are integers, with either all $k_i \ge 0$, in which case $\alpha$ is called {\it positive}, or all $k_i\leq 0$, in which case $\alpha$ is called {\it negative}. 
The {\it height} of $\alpha$ is  $\htt(\alpha):=\sum_{i=1}^{\ell}k_i$ and the \emph{multiplicity} of $\a$ is $\mult(\a):= \dim(\fg_\a)$.
We denote the set of roots by $\Delta$ and the set of positive (respectively negative) roots by $\Delta^+$ (respectively $\Delta^-$). 
The \emph{root lattice} is $Q:=\Span_\mathbb{Z}(\Pi)\subseteq\frak h^*$.

The Lie algebra $\fg$ has a \emph{root  space decomposition}
$$ \fg=\fh\oplus\bigoplus_{\a\in\Delta}\fg_\a$$ 
and a \emph{triangular decomposition} $$\frak{g}=\frak{n}^- \oplus \frak{h} \oplus \frak{n}^+,$$ 
where 
$\frak{n}^+ =\bigoplus\limits_{\alpha\in\Delta^+}\frak{g}_{\alpha}$
and
$\frak{n}^- = \bigoplus\limits_{\alpha\in\Delta^-}\frak{g}_{\alpha}$, by \cite[Theorem 1.2]{Kac90}.

Since $A$ is  symmetrizable,  $\frak{g}$ admits a nondegenerate symmetric invariant bilinear form $(\circ,\circ)_{\fg}$ satisfying 
\begin{align*}
%\tag{B1} (h_{-1,1},h_{jk}) &= -(j-1),\\
\tag{B1}\label{B-hh}
(\a_i^\vee,\a_j^\vee)_{\mathfrak{g}} &= d_ia_{ij},\\
\tag{B2}\label{B-ef}
({e}_{i},{f}_{j})_{\mathfrak{g}} &= \delta_{ij} , \\
\tag{B3}\label{B-gg}
\left(\fg_{\a}, \fg_{\b} \right)_{\fg}&=0 \quad\text{if $\a,\b\in\Delta$ with $\a+\b\ne0$,}\\
\tag{B4}\label{B-hg}
\left(\fh, \fg_{\a} \right)_{\fg}&=0 \quad\text{if $\a\in\Delta$}
\end{align*}  
for $i,j\in I$.
The restriction of $(\circ,\circ)_\fg$ to $\fh$ then induces a symmetric bilinear form $(\circ,\circ)=(\circ,\circ)_{\fh^*}$ on $\fh^*$  with
$$(\a_i,\a_j) = d_ia_{ij}.$$
Note that 
$$\langle\a_i,\a_j^\vee\rangle =\a_i(\a_j^\vee)=a_{ij} = \dfrac{2d_{i}a_{ij}}{d_{i}a_{ii}}= \dfrac{2(\a_i,\a_j)}{(\a_i,\a_i)}.$$
A root $\a\in\Delta$ is called \emph{real} (respectively \emph{imaginary}) if
$(\a,\a)>0$ (respectively $(\a,\a)\le0$).
We write $\Delta_\re$ (respectively $\Delta_\im$) for the set of all  real (respectively imaginary) roots.
We also write $\Delta_\re^{\pm}=\Delta_\re\cap\Delta^\pm$ and
$\Delta_\im^{\pm}=\Delta_\im\cap\Delta^\pm$.

Given a real root $\a$, we define the corresponding \emph{coroot} $\a^\vee$ to be the unique
element of $\fh$ such that
$$\langle x, \a^\vee\rangle = \frac{2(x,\a)}{(\a,\a)}$$
for all $x\in\fh^*$.  Note that this agrees with the definition of $\a_i^\vee$ above.
Each coroot $\alpha^\vee$ is an integral linear combination of the simple coroots, and so is an element of the \emph{coroot lattice} $Q^\vee:=\Span_\Z(\Pi^\vee)\subset\mathfrak h$. 
We define the \emph{reflection in  $\a$} by
$$w_\a: \quad \fh^*\rightarrow\fh^*, \quad  x\mapsto    
   x - \langle x,\alpha^{\vee}\rangle\alpha.$$
   
A \emph{simple reflection} is $w_i:= w_{\a_i}$ for $i\in I$.
%It follows that $w_i(\alpha_i)=-\alpha_i$. 
The group $W\subseteq \GL(\frak{h}^{\ast})$ generated by the simple reflections is called the \emph{Weyl group}. 
The bilinear form $(\circ,\circ)$ on ${\frak{h}^*}$ is $W$-invariant. 
A root $\a$ is real if and only if there exists $w\in W$ such that $w\alpha$ is a
simple root, 
thus  $\Delta_{\re}=W\cdot \Pi$ \cite{Kac90}.  
Since $w\cdot \a=\a_i$ for some $w\in W$ and $i\in I$, we have $w_\a =w w_i w^{-1}\in W$. The group $W$ preserves the sets $\Delta_{\re}$, $\Delta^+_{\im}$,
and $\Delta_{\im}^-$.

Using the generalized Cartan matrix $A=(a_{ij})_{i,j\in I}$, we define the \emph{Coxeter matrix}
$(c_{ij})_{i,j\in I}$ by
$$
c_{ij} := 
 \begin{cases} 
  1 &\text{if $i=j$,}\\
  2 &\text{if $a_{ij}a_{ji}=0$,}\\
  3 &\text{if $a_{ij}a_{ji}=1$,}
 \end{cases}\qquad
 \begin{cases}
  4 &\text{if $a_{ij}a_{ji}=2$,}\\
  6 &\text{if $a_{ij}a_{ji}=3$,}\\
  0 &\text{otherwise.}
 \end{cases}
$$
The Weyl group $W$ is a \emph{Coxeter group} with presentation
$$W=\langle w_1,\dots,w_\ell \mid (w_iw_j)^{c_{ij}}=1 \text{ for $i,j\in I$}
\rangle.$$

For $w\in W$, a word $w=w_{i_1}w_{i_2}\cdots w_{i_k}$ of minimal length is called a \emph{reduced word} for $w$. We call $\ell(w):=k$ the \emph{length} of $w$.
%A pair of roots $\alpha, \beta$ is said to form a {\it prenilpotent pair} if there exist $w, w'\in W$ such that $w\alpha, w\beta\in \Delta_+$ and $w'\alpha, w'\beta\in \Delta_-$. The pair $\alpha, \beta$ is a prenilpotent pair if and only if the set 
%\begin{eqnarray*}%\label{prenil1}
%\rrbracket \alpha, \beta\llbracket:=
%\left(
%$(\mathbb{N}\alpha+\mathbb{N}\beta)\cap\Delta$ %\right)\setminus\{\alpha,\beta\} 
%\end{eqnarray*}
 %is finite (see \cite{KP}, Proposition 4.7). 

%%%%%%%%%%%%%%%%%%%%%%%%%%%%%%%%%%%%%%%%%%
\subsection{Weight modules}\label{SS-module} 
There are elements $\omega_i\in \fh^*$ satisfying $\langle \omega_i,\alpha_j^\vee\rangle=\delta_{ij}$ for $i,j\in I$. We call 
$\{\omega_{i}\mid i\in I\}$ a set of \emph{fundamental weights}.
An element $\mu\in\mathfrak h^*$ is called \emph{regular}, if $\langle \mu,\alpha_i^\vee\rangle\ne0$ for all $i\in I$. 

%Here is where we put the category of modules we are working with-liz

A subgroup $P$ of $\mathfrak h^*$ is called a \emph{weight lattice} if 
\begin{description}
    \item[\rm (W1)] $Q\subseteq P$,
    \item[\rm (W2)] $\langle \mu,\a_i^\vee \rangle \in\Z$ for all $\mu\in P$ and $i\in I$,
    \item[\rm (W3)] $P$ contains a weight which is minimal with respect to being regular,
    \item[\rm (W4)] $P$ contains a set of fundamental weights $\{\omega_i\mid i\in I\}$.
\end{description}
If the fundamental weights can be extended to a set
$$\{\omega_1,\dots,\omega_\ell,\omega_{\ell+1},\dots,\omega_n\}$$
that is both a $\Z$-basis for $P$ and a $\C$-basis for $\fh^*$, then we call $P$ a \emph{restricted} weight lattice. 
It is easily seen that there is a restricted weight lattice for every~$\fg$. 
Note that if $\det(A)\ne0$ then the weight lattice 
$$P = \{ \mu\in\fh^* \mid \langle \mu,\a_i^\vee \rangle \in\Z\text{ for all $i\in I$}\}$$
is unique,
and the set of fundamental weights is a $\Z$-basis of $P$.
We assume throughout that $P$ is a restricted weight lattice (see Section 6.1 of \cite{MP}).

%The {\it weight lattice} $P \subseteq\frak{h}^{\ast}$ is the dual lattice of $Q^\vee$, that is,
%$$P=\{\lambda\in \frak{h}^{\ast}\mid \la \lambda, \alpha_i^{\vee}\ra\in\Z,\ i\in I\}.$$
An element $\lambda\in \frak{h}^{\ast}$   is \emph{dominant} if $\la \lambda, \alpha_i^{\vee}\ra\geq 0$, for all $i\in I$. We denote the set of dominant elements of $P$ 
by $P^{+}$ and write $Q^+=\sum_{i=1}^\ell \Z_{\ge0}\a_i$.
%The {\it dominant weights} are
%$$P_+=\{\lambda\in \frak{h}^{\ast}\mid  \la \lambda, \alpha_i^{\vee}\ra\in\Z_{\geq 0},\ i\in I\}.$$
%An element $\lambda\in \frak{h}^{\ast}$ is called {\it regular} if $\la \lambda, \alpha_i^{\vee}\ra\ne 0$, for $i\in I$.
%Let $\mathcal {U}_{\Q}=\mathcal {U}_{\Q}(\frak{g})$ be the universal enveloping algebra of $\frak{g}$. 
The \emph{dominance order} $\le $ on~$\frak{h}^{\ast}$ is defined by $\mu\le \lambda $ when $\lambda-\mu\in Q^+$.

Let $V$ be a $\frak{g}$-module with defining homomorphism 
$\rho:\frak g\to\gl(V)$.
The \emph{weight space} of~$\mu\in \frak{h}^\ast$ is
$$V_\mu:= \{v\in V\mid \rho(h)(v) = \mu(h) v \text{ for all $h\in\frak{h}$} \},$$
and the set of \emph{weights} of $V$ is
$$\wts(V):=\left\{\mu\in \frak h^\ast \mid V_{\mu}\neq 0\right\}.$$
A nonzero element of $V_\mu$ is called a \emph{weight vector} with \emph{weight} $\mu$.
 We call $V$ \emph{diagonalizable} if 
$$V=\bigoplus_{\mu\in\wts(V)} V_\mu.$$
A diagonalizable module $V$ whose weight spaces are all finite dimensional is called a \emph{weight module}.
Recall that $x\in\frak{g}$ acts \emph{locally nilpotently} on $V$ if, for each $v\in V$, there is a natural number $n=n(x,v)$ such that
$\rho(x)^{n}(v)=0$. 
We call $V$ an \emph{integrable module} if
it is a weight module, and 
$e_i$ and $f_i$ act locally nilpotently on~$V$ for all $i\in I$. 
If $\det(A)\ne0$ and $V$ is integrable, then $\wts(V)\subseteq P$ as in \cite{Kac90}.

The \emph{fundamental group} $P/Q$ is an abelian group which is infinite if $\det(A)=0$, and has order $|\det(A)|$ otherwise \cite{CS}.
The \emph{weight lattice of $V$} is $L_V:=\Span_\Z(\wts(V))$.
Further, $V$ is faithful if and only if $Q\subseteq L_V$.
Hence $L_V/Q$ is a subgroup of $P/Q$ when $V$ is a faithful integrable module. 

%Recall that  $V$ is called  $\frak{h}$-\emph{diagonalizable} if it is a direct sum of its weight spaces, that is,
%$$V=\bigoplus_{\mu\in \frak{h}^\ast} V_{\mu},$$
%where $V_\mu:= \{v\in V\mid h\cdot v = \mu(h) v \text{ for all $h\in\frak{h}$} \}$ the weight space corresponding to $\mu\in \frak{h}^\ast$. The set of weights of the representation $V$ is
%The {\it weight space} of $V$ with weight $\mu\in P$ is 
%$$V_\mu=\{v\in V\mid  x v=\mu(x)v \text{ for all }x\in \frak{h}\}.$$

The \emph{adjoint module}  $V=\fg$ with $\rho=\ad$ is a  faithful integrable module with $\wts(V)=\Delta$.

We call $V$ a \emph{highest weight module} with  highest weight $\lambda$, if it has a weight vector $v_\lambda$ (the \emph{highest weight vector}) of weight $\lambda$ such that $\n^+ \cdot v_\lambda=0$ and the $\fn^-$-module generated by
$v_\lambda$ is all of $V$. The weight $\lambda$ 
satisfies $\mu\le \lambda$ for all $\mu\in\wts(V)$. 
For each $\lambda\in P^+$, there is a  unique irreducible highest weight module and this module is integrable \cite{Kac90}. 
More generally,  $V$ is a \emph{Category $\mathcal{O}$ module} \cite{MP95} if it is a weight module and there are finitely many weights $\lambda_1,\dots,\lambda_k\in P$ such that
every weight of $V$ is less than or equal to at least one of the $\lambda_i$. 

On the other hand, $V$ is a \emph{lowest weight module} with lowest weight $\lambda$ if it has a \emph{lowest weight vector} $v_\lambda$  such that $\fn^-\cdot v_\lambda =0$ and
the module generated by $\n^+ \cdot  v_\lambda$ is all of $V$.
More generally,  $V$ is an \emph{opposite Category $\mathcal{O}$ module} if it is a weight module and there are finitely many weights $\lambda_1,\dots,\lambda_k\in P$ such that
every weight of $V$ is greater than or equal to at least one of the $\lambda_i$. 

% this is not needed
%Note that $V$ is a highest weight (resp.\  Category $\mathcal{O}$) module if and only ifits dual $V^*$ is a lowest weight (resp.\  opposite Category $\mathcal{O}$) module. 

We assume throughout that $P$ is a  restricted weight lattice and $V$ is a  weight $\fg$-module, all of whose weights are contained in $P$. Thus the weights of $V$ are \emph{integral},
that is, $\la \mu, \alpha_i^{\vee}\ra\in \Z$ for all $\mu\in\wts(V)$ and~$i\in I$. 
%Do we have any example of integrable modules whose set of weights are not contained in $P$? Yes. If $\det(A)=0$ then the extension of the fundamental weights to a basis of P is essentially arbitrary, so we can easily take a different extension so the  weights are not contained in $P$.}

%%%%%%%%%%%%%%%%%%%%%%%%%%%%%%%%%%%%%%%%%%
\subsection{Kac--Moody Chevalley groups} 
Suppose $V$ is integrable. Then it follows that
\begin{align*}
\chi_{\a_i}(t)&:=\exp(t \rho(e_i))=1+t \rho(e_i)+\frac{t^2}{2}(\rho(e_i))^2+\cdots \quad\text{and}\\
\chi_{-\a_i}(t)&:=\exp(t \rho(f_i))=1+t \rho(f_i)+\frac{t^2}{2}(\rho(f_i))^2+\cdots
\end{align*}
are \emph{summable} for $t\in\C$, that is, they reduce to finite sums when applied to an element of $V$.
Hence they define elements of $\GL(V)$. 
We define the \emph{unipotent groups} $U^+_V$ and $U^-_V$ by
$$ U_V^\pm := \langle \chi_{\pm\a_i}(t)\mid i\in I,\, t\in\C\rangle.$$

Given $\varpi\in P$ and $t\in\C^\times$, we  define $h_{\varpi}(t)$  on  $V=\bigoplus_{\mu\in\wts(V)} V_\mu$ by
$$h_{\varpi}(t)\cdot v = t^{\langle\mu, \varpi\rangle} v $$
for $v\in V_\mu$.
Then $h_{\varpi}(t)$ is also in $\GL(V)$ and we define the \emph{toral group}
$$H_V=\langle h_{\varpi}(t)\mid \varpi\in L_V,\, t\in \C^\times\rangle.$$

Together these generate a {\it Kac--Moody Chevalley group}  
$$G_V:=\langle H_V,\ U^+_V,\, U^-_V\rangle.$$

For $i\in I$ and $s\in\C^\times$, let  
 \begin{eqnarray*}\label{rootref}
 \widetilde{w}_{i}(s)= \chi_{\alpha_i}(s)\chi_{-\alpha_i}(-s^{-1})\chi_{\alpha_i}(s),
\end{eqnarray*}
and set $\widetilde{w}_{i}:=\widetilde{w}_{i}(1)$. 
We write $\widetilde{W}$ for the subgroup of $G_V$ generated by $\widetilde{w}_{i}$ for all $i\in I$. Note that $\widetilde{W}H_V/H_V\cong W$   by the map  $\widetilde{w}_iH_V\mapsto w_i$. 
Each $w\in W$ can be written as  a reduced word $w_{j_{1}}w_{j_{2}}\dots w_{j_{m}}$ and we denote the corresponding element 
$
\widetilde{w}=\widetilde{w}_{j_{1}}\widetilde{w}_{j_{2}}\dots \widetilde{w}_{j_{m}} \in\widetilde{W}.
$
\begin{proposition} Each
$\widetilde{w}$ is independent of the choice of reduced word for $w$.
\end{proposition}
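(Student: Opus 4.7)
My plan is to reduce the claim to checking the corresponding \emph{braid identity} for the Tits lifts $\widetilde w_i$. By the Matsumoto--Tits solution of the word problem in the Coxeter group $W$, any two reduced expressions for $w$ can be transformed into one another by a sequence of braid moves
$$\underbrace{w_iw_jw_i\cdots}_{c_{ij}\text{ factors}}\;=\;\underbrace{w_jw_iw_j\cdots}_{c_{ij}\text{ factors}}\qquad(c_{ij}\in\{2,3,4,6\}),$$
so it suffices to establish the identity
$$\underbrace{\widetilde{w}_i\widetilde{w}_j\widetilde{w}_i\cdots}_{c_{ij}}\;=\;\underbrace{\widetilde{w}_j\widetilde{w}_i\widetilde{w}_j\cdots}_{c_{ij}}$$
in $G_V$ for each pair $i\ne j$ with $c_{ij}\in\{2,3,4,6\}$.

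To verify this identity on the integrable module $V$, I would restrict attention to the rank-two subalgebra $\fg^{(i,j)} := \langle e_i,f_i,e_j,f_j\rangle\subseteq\fg$. Since $a_{ij}a_{ji}\in\{0,1,2,3\}$ precisely in the cases $c_{ij}=2,3,4,6$, the Serre relations \eqref{Lad} force $\fg^{(i,j)}$ to be the finite-dimensional semisimple Lie algebra of type $A_1\oplus A_1$, $A_2$, $B_2$, or $G_2$, respectively. Because $V$ is integrable as a $\fg$-module, its restriction to $\fg^{(i,j)}$ is integrable, and hence decomposes as a direct sum of finite-dimensional irreducible $\fg^{(i,j)}$-modules. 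On each such summand, the operators $\chi_{\pm\a_i}(t)$ and $\chi_{\pm\a_j}(t)$ reduce to ordinary exponentials of nilpotent operators, and $\widetilde w_i,\widetilde w_j$ are the classical Chevalley lifts of the simple reflections.

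Next, I would invoke the classical rank-two braid identity for Chevalley lifts inside any finite-dimensional faithful representation of a semisimple Lie algebra of type $A_1\oplus A_1$, $A_2$, $B_2$, or $G_2$; this is proved by direct manipulation of products of exponentials and can be found in the standard Chevalley-group expositions of Steinberg and Tits. Since the identity then holds on every finite-dimensional $\fg^{(i,j)}$-summand of $V$, it holds on all of $V$, and Matsumoto's theorem yields that $\widetilde w$ is independent of the chosen reduced word for $w$.

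The main obstacle is the rank-two verification in the $G_2$ case ($c_{ij}=6$), where the identity involves a six-fold product of exponentials; the $A_2$ and $B_2$ cases are shorter, and the $A_1\oplus A_1$ case is trivial by commutativity. All four identities are completely classical, so the genuine content of the proof is simply the reduction to a rank-two subalgebra action on each finite-dimensional $\fg^{(i,j)}$-summand of $V$.
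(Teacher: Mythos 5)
Your proposal follows essentially the same route as the paper: both reduce the claim via Matsumoto--Tits to the braid identities for the lifts $\widetilde w_i$ in the cases $c_{ij}=2,3,4,6$, and then settle these rank-two cases ($A_1A_1$, $A_2$, $B_2$, $G_2$) by appeal to the classical theory of Chevalley lifts (the paper cites Springer's Proposition~9.3.2 where you cite Steinberg--Tits). Your extra step of restricting $V$ to the rank-two subalgebra and decomposing into finite-dimensional summands is a correct way of making that citation applicable, so the argument is sound.
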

\begin{proof}
By \cite{Tits69} we can transform any reduced word for $w$ into any other reduced word for $w$ using only the braid relations
$w_i w_jw_i\cdots = w_j w_i w_j\cdots $
where both sides have length $c_{ij}$. 
These relations are only non-trivial for $c_{ij}=2,3,4,6$,
so it suffices to show that 
$\wt{w}_i \wt{w}_j \wt{w}_i\cdots = \wt{w}_j \wt{w}_i \wt{w}_j\cdots $
in subgroups of Cartan type $A_1A_1$, $A_2$, $B_2$, $G_2$, respectively.
This is \cite[Proposition~9.3.2]{Springer}.
\end{proof}
%{\color{magenta} By Lemma~3.8(a) and Lemma~3.7(a) of \cite{Kac90}}
\begin{lemma}[\cite{Kac90}, Lemma~3.8(a)]\label{ww}  We have 
$\widetilde{w}\cdot V_\mu=V_{w\mu}$ and so $\dim V_{\mu}=\dim V_{w\mu}$. 
\end{lemma}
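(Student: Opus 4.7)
The plan is to reduce immediately to the case of a simple reflection and then use $\mathfrak{sl}_2$-theory. Since $w = w_{j_1}\cdots w_{j_m}$ with $\widetilde{w} = \widetilde{w}_{j_1}\cdots \widetilde{w}_{j_m}$ (which is well-defined by the previous proposition), induction on $m$ reduces the claim to showing $\widetilde{w}_i\cdot V_\mu = V_{w_i\mu}$ for each $i\in I$. Once this simple-reflection case is settled, the dimension equality follows from the invertibility of $\widetilde{w}\in G_V$.

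For the simple case, the cleanest route is the conjugation identity
\[
\widetilde{w}_i\,\rho(h)\,\widetilde{w}_i^{-1} = \rho(w_i(h)) \quad\text{for all $h\in\mathfrak h$.}
\]
I would verify this on a spanning set of $\mathfrak h$. If $h\in\ker\alpha_i$, then $[h,e_i]=[h,f_i]=0$, so $\rho(h)$ commutes with $\chi_{\pm\alpha_i}(t)$ for all $t$, and hence with $\widetilde{w}_i$; this matches $w_i(h)=h$. For $h=\alpha_i^\vee$, the identity reduces to a direct $\mathrm{SL}_2$-calculation inside the $\mathfrak{sl}_2$-triple $\langle e_i,f_i,\alpha_i^\vee\rangle$; integrability of $V$ guarantees that $V$ restricts to a direct sum of finite-dimensional modules for this copy of $\mathfrak{sl}_2$, so $\widetilde{w}_i$ is a well-defined operator and the standard fact $\widetilde{w}_i\,\alpha_i^\vee\,\widetilde{w}_i^{-1}=-\alpha_i^\vee$ applies. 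These two cases cover $\mathfrak h$ since $\alpha_i(\alpha_i^\vee)=2\ne 0$.

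Granting the conjugation identity, take $v\in V_\mu$ and $h\in\mathfrak h$. Then
\[
\rho(h)\,\widetilde{w}_i v \;=\; \widetilde{w}_i\bigl(\widetilde{w}_i^{-1}\rho(h)\widetilde{w}_i\bigr)v \;=\; \widetilde{w}_i\,\rho(w_i h)\,v \;=\; \mu(w_i h)\,\widetilde{w}_i v \;=\; (w_i\mu)(h)\,\widetilde{w}_i v,
\]
so $\widetilde{w}_i v \in V_{w_i\mu}$, giving $\widetilde{w}_i\cdot V_\mu \subseteq V_{w_i\mu}$. Applying the same argument to $\widetilde{w}_i^{-1}$ and the weight $w_i\mu$ (using $w_i^{-1}=w_i$) yields the reverse inclusion, so the two weight spaces agree under $\widetilde{w}_i$, and then by induction $\widetilde{w}\cdot V_\mu = V_{w\mu}$. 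Since $\widetilde{w}$ is invertible on $V$, the equality of dimensions is immediate.

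The main technical obstacle is the $\mathfrak{sl}_2$ step: one must justify that $\widetilde{w}_i = \chi_{\alpha_i}(1)\chi_{-\alpha_i}(-1)\chi_{\alpha_i}(1)$ genuinely acts as the expected $\mathrm{SL}_2$-rotation on each weight vector, which requires the local nilpotency of $\rho(e_i)$ and $\rho(f_i)$ and the decomposition of $V$ into finite-dimensional $\mathfrak{sl}_2(\alpha_i)$-submodules—both guaranteed by the standing assumption that $V$ is integrable. Everything else is either routine bookkeeping with reduced words or a direct consequence of the defining bracket relations.
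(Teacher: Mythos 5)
The paper offers no proof of this lemma, only the citation to \cite{Kac90}, Lemma~3.8(a), and your argument is correct and essentially reconstructs that standard proof: one establishes $\widetilde{w}_i\,\rho(h)\,\widetilde{w}_i^{-1}=\rho(w_i h)$ (checked on $\ker\alpha_i$, where $\rho(h)$ commutes with $\chi_{\pm\alpha_i}(t)$, and on $\alpha_i^\vee$ via the integrated $\mathfrak{sl}_2(\alpha_i)$-action furnished by integrability), then transports weight vectors and reduces the general case to simple reflections along a reduced word. Your handling of the two-case decomposition of $\mathfrak h$ and of the reverse inclusion via $\widetilde{w}_i^{-1}$ is sound, so there is nothing to fix.
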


\begin{corollary} If $V$ is the adjoint representation $V=\fg$, we have $\wts(V)=\Delta$ and $V_\a=\fg_\a$ for $\a\in\Delta$. Hence 
$\widetilde{w}\cdot \fg_\a=\fg_{w\a}$ and so $\mult(\a)=\mult(w\a)$. 
\end{corollary}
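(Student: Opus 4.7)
The plan is to recognize this as an immediate specialization of Lemma~\ref{ww} to the adjoint module, combined with the definitional identification of weight spaces with root spaces.

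First I would unpack the weight-space definition for the adjoint representation. When $V=\fg$ and $\rho=\ad$, the defining condition $\rho(h)v=\mu(h)v$ for all $h\in\fh$ becomes $[h,x]=\mu(h)x$ for all $h\in\fh$, which is exactly the defining condition of the root space $\fg_\mu$ from Section~\ref{RWG}. Hence $V_\mu=\fg_\mu$ for every $\mu\in\fh^*$, and the set $\wts(V)=\{\mu\mid V_\mu\ne 0\}$ coincides with $\Delta$ (under the convention that $\Delta$ collects the nonzero weights of the adjoint; $\fg_0=\fh$ is handled separately and is fixed pointwise by the toral action).

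Next I would apply Lemma~\ref{ww} verbatim: for any $w\in W$ and any lift $\wt w\in\wt W$, we have $\wt w\cdot V_\mu=V_{w\mu}$. Specializing $\mu=\a\in\Delta$ and using $V_\a=\fg_\a$, $V_{w\a}=\fg_{w\a}$, this reads
\[
\wt w\cdot \fg_\a=\fg_{w\a}.
\]
Since $\wt w\in\GL(V)$ restricts to a linear bijection between these finite-dimensional weight spaces, taking dimensions gives $\mult(\a)=\dim\fg_\a=\dim\fg_{w\a}=\mult(w\a)$.

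There is essentially no obstacle: the only thing to verify is the identification $V_\mu=\fg_\mu$, which is immediate from the definitions, after which Lemma~\ref{ww} does all the work. The statement really serves to translate the general weight-module result into the language of roots and multiplicities that will be used in the sequel.
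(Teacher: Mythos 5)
Your proof is correct and is exactly the argument the paper intends: the corollary is a direct specialization of Lemma~\ref{ww} to $V=\fg$, $\rho=\ad$, using the definitional identity $V_\mu=\fg_\mu$ (and the paper's convention $\wts(\fg)=\Delta$), from which $\wt w\cdot\fg_\a=\fg_{w\a}$ and $\mult(\a)=\mult(w\a)$ follow immediately. No differences from the paper's (implicit) proof worth noting.
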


For each $\a\in\Delta^{\re}$, 
we fix $w\in W$   such that $\alpha=w\alpha_i$, and write $x_\a = \widetilde{w}e_i$.
Now the root space $\fg_\a = \wt{w}\cdot \C e_i$, and we fix the basis element $x_\a = \widetilde{w}e_i$ so that 
$\fg_\a =  \C x_\a$.
Define
$\chi_{\a}(t):=\exp(t \rho(x_\a))$.
Then
$$\chi_{\alpha}(t)=\exp(t\rho( \widetilde{w} e_i))=\widetilde{w}\chi_{\alpha_i}( t)\widetilde{w}^{-1}\in G_V.$$ 
%For $\alpha\in\Delta^{\re}$ and  $s\in\C^\times$, we have  
%$$ \widetilde{w}_{\alpha}(s)= \chi_{\alpha}(s)\chi_{-\alpha}(-s^{-1})\chi_{\alpha}(s).$$
The \emph{root group} of a real root $\a$ is
$$U_\a= \{\chi_\a(t)\mid t \in \C\}\cong \C^+,$$
where $\C^+$ denotes the additive group of $\C$.
We consider imaginary root groups in Section~\ref{S-rtgp}.

\begin{theorem}[\cite{CER}]\label{T-indep} 
For faithful integrable modules $V$, $U_V^+$ is independent of the choice of $V$, while  $G_V$ depends only on~$L_V$.
\end{theorem}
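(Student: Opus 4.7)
The plan is to identify $U_V^+$ with a group presented by generators and relations intrinsic to $\fg$, and to show that $H_V$ and the interactions governing $G_V$ depend only on the lattice $L_V$.

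For the independence of $U_V^+$ from $V$, I would compare two faithful integrable modules $V,V'$ through their direct sum $V\oplus V'$, which is again faithful and integrable. The canonical projections induce surjections $U_{V\oplus V'}^+ \twoheadrightarrow U_V^+$ and $U_{V\oplus V'}^+ \twoheadrightarrow U_{V'}^+$. To show both are isomorphisms, I would establish that if an element $g\in U_{V\oplus V'}^+$, written as a finite product $g=\prod_k \chi_{\a_{i_k}}(t_k)$, acts trivially on $V$, then $g$ acts trivially on $V'$ as well. Expand $g$ via Baker--Campbell--Hausdorff to a formal series supported on positive roots; because $V$ is a faithful weight module, the weight-graded action on $V$ determines each homogeneous component in $\fn^+$ and forces it to vanish, so the same series acts trivially on $V'$. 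This yields canonical isomorphisms $U_V^+\cong U_{V\oplus V'}^+\cong U_{V'}^+$.

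For $G_V$ depending only on $L_V$: the group $H_V$ is generated by $h_\varpi(t)$ for $\varpi\in L_V$, $t\in\C^\times$, and acts on the weight space $V_\mu$ by $t^{\langle\mu,\varpi\rangle}$. The kernel of the presentation $L_V\otimes_\Z\C^\times\to H_V$ is the subgroup cut out by $\prod_k t_k^{\langle\mu,\varpi_k\rangle}=1$ for every $\mu\in\wts(V)$; by $\Z$-linearity this is the same condition for every $\mu\in L_V$, hence depends on $L_V$ alone. The generating relations for $G_V=\langle H_V,U_V^+,U_V^-\rangle$ beyond those within $H_V$ and $U_V^\pm$ come from the conjugation identity $h_\varpi(t)\chi_{\a_i}(u)h_\varpi(t)^{-1}=\chi_{\a_i}(t^{\langle\a_i,\varpi\rangle}u)$ and its negative analog, both of which involve only the pairings $\langle\a_i,\varpi\rangle$ for $\varpi\in L_V$. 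Combining this with the independence of $U_V^\pm$, we conclude that $G_V$ is determined by $(\fg, L_V)$.

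The main obstacle is the injectivity step in the first part: showing there are no hidden relations in $U_V^+$ beyond the universal ones imposed by the algebra structure of $\fn^+$. For finite-type $\fg$ this is part of Steinberg's classical presentation. For general Kac--Moody $\fg$, the infinite-dimensionality of $\fn^+$ and the presence of imaginary root spaces with potentially large multiplicities mean that one must verify weight-graded vanishing of the BCH expansion level by level along a filtration of $\fn^+$ (say, by height), and then argue that faithfulness of the integrable $V$-action on each $\fg_\a$ suffices at every level. Once this bookkeeping is organized, independence of $V$ falls out immediately from the direct-sum comparison.
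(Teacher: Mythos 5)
First, note that the paper does not actually prove Theorem~\ref{T-indep}: it is imported from \cite{CER}, so the comparison below is with what a complete proof must contain rather than with an argument in the text.

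Your first half is essentially sound, and it is the same mechanism the paper itself develops later for its completed groups: passing to $V\oplus V'$, writing a finite product $\prod_k\chi_{\a_{i_k}}(t_k)$ as $\exp(\rho(z))$ for the Baker--Campbell--Hausdorff element $z\in\wh{\fn}^+$ (which is computed in the abstract algebra, hence is the same for both module actions), and then using faithfulness to kill $z$ degree by degree: if $\exp(\rho_V(z))=1$ then the lowest graded component $z_N$ satisfies $\rho_V(z_N)=0$, so $z_N=0$, and one inducts. This is exactly the injectivity statement underlying Lemma~\ref{homeo} and Proposition~\ref{L-nilpexp}, and it does give canonical isomorphisms $U^+_V\cong U^+_{V\oplus V'}\cong U^+_{V'}$ matching the generators $\chi_{\a_i}(t)$.

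The genuine gap is in the second half. You assert that the only relations in $G_V$ beyond those internal to $H_V$ and $U^{\pm}_V$ are the torus--unipotent conjugation identities $h_\varpi(t)\chi_{\pm\a_i}(u)h_\varpi(t)^{-1}=\chi_{\pm\a_i}(t^{\pm\langle\a_i,\varpi\rangle}u)$. That claim is unjustified and, as stated, false: the relations coupling $U^+_V$ and $U^-_V$ --- the rank-one relations through $\wt{w}_i(t)=\chi_{\a_i}(t)\chi_{-\a_i}(-t^{-1})\chi_{\a_i}(t)$, including identities of Steinberg type expressing torus elements as products of unipotents, and the conjugation of root groups by $\wt{W}$ --- are precisely where the dependence on $L_V$ lives. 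For $\fg=\mathfrak{sl}_2$ the adjoint and natural modules have identical $U^{\pm}_V\cong\C^+$ and identically shaped $H$-conjugation relations, yet $G_V$ is $\PGL_2$ in one case and $\SL_2$ in the other; the difference is detected only by how $\langle\chi_{\a}(t),\chi_{-\a}(t)\rangle$ closes up, i.e.\ by the cross relations you omit. Your BCH/faithfulness device cannot reach these elements, since products mixing positive and negative root vectors have no grading bound and no convergent BCH expression. A complete argument needs either a presentation theorem (Steinberg in finite type, Kac--Peterson/Tits in the Kac--Moody case) whose defining relations are shown to depend only on $L_V$, or a direct-sum comparison in which one proves, via the Bruhat or Birkhoff decomposition and uniqueness of factorization on the big cell, that $\ker\bigl(G_{V\oplus V'}\to G_V\bigr)$ is a central subgroup of the torus, and then identifies the two kernels using $L_V=L_{V'}$. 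Without that input, the statement that $G_V$ depends only on $L_V$ is not established; this is the content for which the paper cites \cite{CER}.
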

Hence we write $U^\pm$ for $U_V^\pm$, and $G_L$ for $G_V$ when $L=L_V$.

%%%%%%%%%%%%%%%%%%%%%%%%%%%%%%%%%%%%%%%%%%%%%%%%%%%%%%
%%%%%%%%%%%%%%%%%%%%%%%%%%%%%%%%%%%%%%%%%%%%%%%%%%%%%%
\section{Completions of modules and pro-summability}\label{S-prosum}
In this section, we construct completions of Kac--Moody algebras and their modules, and then define pro-summable maps (Subsection \ref{SS-pro-sum}) on these completions.
Let $I=\{1,\dots,\ell\}$ be a finite index set, let $A=(a_{ij})_{i,j\in I}$ be a symmetrizable generalized Cartan matrix, let $\fg=\fg(A)$ be the Kac--Moody algebra of $A$,
and let $P$ be a restricted weight lattice of $\fg$.
Let $V$ be  a weight module of $\fg$ with $\wts(V)\subseteq P$.

%%%%%%%%%%%%%%%%%%%%%%%%%%%%%%%%%%%%%%%%%%%%%%%%%%%%%%
\subsection{Graded algebras and modules}\label{SS-graded}
First we review the general theory of graded Lie algebras and modules \cite{NijRich}.
Fix a lattice  $L$.
Recall that a Lie algebra $\mathfrak{L}$ over $\C$ is \emph{L-graded} if
$$\mathfrak{L} =\bigoplus_{a\in L} \mathfrak{L}_a,$$
with $\left[\mathfrak{L}_a,\mathfrak{L}_b\right]\subseteq \mathfrak{L}_{a+b}$  and $\mathfrak{L}_a$ is finite dimensional for all $a,b\in L$.
A subalgebra $\mathfrak{M}\subseteq \mathfrak{L}$ is a \emph{graded subalgebra} if
$$\mathfrak{M} =\bigoplus_{a\in L} \mathfrak{M}_a$$
where $\mathfrak{M}_a=\mathfrak{M}\cap \mathfrak{L}_a$.
If $\mathfrak{M}$ is also an ideal in $\mathfrak{L}$, then
$\mathfrak{L}/\mathfrak{M}$ is graded with $(\mathfrak{L}/\mathfrak{M})_a :=
\mathfrak{L}_a/\mathfrak{M}_a$.

Fix a Lie algebra $\mathfrak{L}$ that is $L$-graded for some lattice $L$. 
An $L$-graded module is an 
$\mathfrak{L}$-module $V$  such that 
$$V =\bigoplus_{a\in L} V_a$$
where $\mathfrak{L}_a\cdot V_b\subseteq V_{a+b}$ and  $V_a$ is finite dimensional for all $a,b\in L$.

Each component $\mathfrak{L}_a$ has the topology of a finite dimensional $\C$-vector space, which induces the product topology 
on $\mathfrak{L}=\bigoplus_{a\in L}\mathfrak{L}_a\subseteq \prod_{a\in L}\mathfrak{L}_a$.
We get a similar topology on $V$.
This in turn induces topologies on $\End_{\mathfrak{L}}(V)$ and $\Aut_{\mathfrak{L}}(V)$. 
%Given $a\in\Lambda$, the map $\phi\in\End(\mathfrak{L})$ is called \emph{graded of degree $a$} if
%$$\phi\left(\mathfrak{L}_b \right)\subseteq \mathfrak{L}_{b+a}$$
%for all $b\in\Lambda$.
All of these topological spaces are sequential~\cite{Munkres}, so we can use limits of sequences or series to prove our topological results.

Suppose $M$ is another lattice and  $f: L\to M$ is a homomorphism with the property that, for each $m\in M$, 
$\mathfrak{L}_a=0$ for all but finitely many $a\in f^{-1}(m)$.
Then we get an $M$-grading 
$\mathfrak{L} =\bigoplus_{m\in M} \mathfrak{L}_m$ by setting
$$ \mathfrak{L}_{m} = \bigoplus_{a\in f^{-1}(m)} \mathfrak{L}_a,$$
for all $m\in M$. 
Similarly, if $V_a=0$ for all but finitely many $a\in f^{-1}(m)$, then we get an $M$-grading of $V$ with $ V_{m} = \bigoplus_{a\in f^{-1}(m)} V_a$.

%%%%%%%%%%%%%%%%%%%%%%%%%%%%%%%%%%%%%%%%%%%%%%%%%%%%%%
\subsection{Gradings of $\fg$ and $V$}\label{SS-Vgraded}
Now consider  Kac--Moody algebra $\fg$ and weight module $V$ with $\wts(V)\subseteq P$.
Then $\fg$ is graded by the root lattice $Q$. Also
$V=\bigoplus_{\mu\in L_V} V_\mu$ where $L_V=\Span_\Z(\wts(V))$.
Thus we can consider $\fg$ and $V$ to be graded over $M_V:=\Span_\Z(Q, L_V)$. 
By \cite[Lemma 27]{Steinberg},  $V$ is faithful if and only if $Q\subseteq L_V$  if and only if $M_V=L_V$.

We find it convenient to have a $\Z$-grading for $\fg$ and $V$, so we need an appropriate function $f:M_V\rightarrow\Z$.
We now construct a map $f$ that works for a large class of weight modules (see Lemma~\ref{L-f}).
We start with the usual height function $\htt:\Delta\rightarrow\Z$ where $\Delta$ is the set of roots.
This extends to a $\Q$-linear map $f':\Span_\Q(\Delta)\rightarrow\Q$ defined by
$$ f'(\a)=\sum_{i\in I} a_i$$
where $\a=\sum_{i\in I} a_i\a_i$.
Now extend $f'$ to a $\Q$-linear map $\Span_\Q( P)\rightarrow\Q$ (the choice of extension is not important).
Now let $q_j=f'(b_j)$ for some basis $\{b_j\}$ of $M_V$.
We can choose a positive integer $a$ so that $aq_j\in\Z$ for all $j$.
Now take $f$ to be the restriction of $af'$ to $P$.
Note that $f(\a)=a\htt(\a)$ for $\a\in\Delta$.
If $L_V\subseteq Q$, then we can take $a=1$.
When $\det(A)\ne0$, we can take $a$ to be a divisor of $|\!\det(A)|$.

We now write
$$ \Delta_m:= \{\a\in\Delta\cup\{0\}\mid f(\a)=m\},  \qquad \wts_m(V):=\{\mu\in\wts(V)\mid f(\mu)=m\}.$$
If $V$ is the adjoint representation, then we can take $f=\htt$ and $\wts_m(V)=\Delta_m$.
\begin{lemma}\label{L-f}
Suppose $V$ is in Category $\mathcal{O}$, is in opposite Category $\mathcal{O}$, or is the adjoint module.
Then the map $f$ as above satisfies the conditions:
\begin{itemize}
\labitem{\rm (F1)}{L-f-lin}
$f$ is $\Z$-linear,
    %\emph{order preserving}, that is, $\mu\le\nu$ implies $f(\mu)\le f(\nu)$ for  $\mu,\nu\in P$,
\labitem{\rm (F2)}{L-f-fin} $\Delta_m$ and $\wts_m(V)$ are finite sets for every $m\in\Z$,
\labitem{\rm (F3)}{L-f-Delta} $\Delta_m\subseteq\Delta^+$ if $m>0$, $\Delta_0=\{0\}$, and $\Delta_m\subseteq\Delta^-$ if $m<0$, 
\labitem{\rm (F4)}{L-f-incomp} the weights in $\wts_m(V)$ are pairwise incomparable under the dominance ordering for every $m\in\Z$.
\end{itemize}
\end{lemma}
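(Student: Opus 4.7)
The plan is to verify (F1)--(F4) directly from the explicit construction of $f$, handling the three module classes uniformly where possible. Property (F1) is immediate from the construction: $f$ is the restriction to $P$ of the $\Q$-linear map $af' : \Span_\Q(P) \to \Q$, and the scalar $a$ was chosen precisely so that $f$ takes integer values on a $\Z$-basis $\{b_j\}$ of $M_V$. Restricting to $M_V$ then yields a $\Z$-linear map into $\Z$.

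For (F3), I would use that $f(\a) = a\,\htt(\a)$ for $\a \in \Delta$, with $a > 0$. Since every root is either a non-negative or a non-positive $\Z$-combination of simple roots, $\htt$ has strictly positive values on $\Delta^+$ and strictly negative values on $\Delta^-$, and cannot vanish on a root; hence $\Delta_m \cap \Delta \subseteq \Delta^{\pm}$ when $\pm m > 0$, and the only element of height zero in $\Delta_0$ is $0$ itself. Property (F4) I plan to handle by contradiction: if $\mu \le \nu$ are distinct weights in $\wts_m(V)$, then $\nu - \mu$ is a non-zero element of $Q^+$, so if I write $\nu - \mu = \sum_i k_i \a_i$ with $k_i \ge 0$, then $f(\nu - \mu) = a\sum_i k_i > 0$. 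But $\Z$-linearity from (F1) forces $f(\nu - \mu) = m - m = 0$, a contradiction.

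The main work, and the main obstacle, is (F2). The underlying combinatorial observation is that for each fixed $m \in \Z$, the set of tuples $(k_i)_{i \in I} \in \Z_{\ge 0}^I$ (respectively $\Z_{\le 0}^I$) satisfying $a \sum_i k_i = m$ is finite. This immediately gives finiteness of $\Delta_m$, and handles the adjoint case since there $\wts_m(V) = \Delta_m$. For $V$ in Category $\mathcal O$, I would apply the defining property to obtain finitely many weights $\lambda_1,\dots,\lambda_k$ dominating all weights; then each $\mu \in \wts_m(V)$ satisfies $\lambda_j - \mu \in Q^+$ for some $j$, and $\Z$-linearity pins $f(\lambda_j - \mu) = f(\lambda_j) - m$ to a single value, so the combinatorial observation bounds the fiber. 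Opposite Category $\mathcal O$ is the mirror argument using lower bounds and $-Q^+$. The only real care needed is in verifying that the finitely-many bounding weights interact well with $f$ under the chosen extension from $Q$ to $P$, which is guaranteed by (F1).
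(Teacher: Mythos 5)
The paper states Lemma~\ref{L-f} without giving a proof, so there is nothing to compare against; your argument is a correct and complete verification of the routine checks the authors left implicit. In particular, your treatment of (F2) is exactly what is needed: finiteness of the set of elements of $Q^+$ (or $-Q^+$) with a prescribed value of $f$, combined with the finitely many dominating (respectively, dominated) weights $\lambda_1,\dots,\lambda_k$ from the Category $\mathcal{O}$ (respectively, opposite Category $\mathcal{O}$) condition, and your contradiction argument for (F4) via $f(\nu-\mu)=a\sum_i k_i>0$ is the standard one; note only that $f(\lambda_j)$ need not be an integer if $\lambda_j\notin M_V$, but your argument never needs that, only that $f(\lambda_j)-m$ is a fixed value.
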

Note that \ref{L-f-lin} and \ref{L-f-fin} are the conditions from Subsection~\ref{SS-graded} for $\fg$ and $V$ to be $\Z$-graded with
$$
 \fg_m:= \bigoplus_{\a\in\Delta_m} \fg_\a \quad\text{and} \quad V_m:=\bigoplus_{\mu\in\wts_m(V)} V_\mu
$$
for all $m\in\Z$.
If $V$ is a weight module with $\wts(V)\subseteq P$ and a function $f:P\to\Z$ satisfying \ref{L-f-lin}--\ref{L-f-incomp},
we say that $V$ is  a \emph{standard graded module}.
We do not know whether every weight module  with $\wts(V)\subseteq P$ is a standard graded module.
%, but \cite{Bai} gives examples of non-integrable irreducible weight modulesfor which (2) does not hold \textcolor{red}{[check]}.

%%%%%%%%%%%%%%%%%%%%%%%%%%%%%%%%%%%%%%%%%%%%%%%%%%%%%%
\subsection{Completions of $\fg$ and $V$}
Let $\fg$ be a Kac--Moody algebra  and let $V$ be a standard graded module for $\fg$.
Note that 
$$\fh=\fg_0, \qquad \fn^{+} = \bigoplus_{m>0} \fg_{m},\qquad \fn^{-} = \bigoplus_{m<0} \fg_{ m}.$$
For $m\in\Z$, define 
$${\fn}_{m}:= \bigoplus_{k\ge m} \fg_{k}  \quad\text{and}\quad
{N}_{m}:= \bigoplus_{k\ge m} V_{k}.$$
Note that each ${N}_{k}$ is an ${\mathfrak n}^+$-module, and
${\fn}_{k}$ is a Lie algebra for $k\ge0$ but only an ${\mathfrak n}^+$-module  for $k<0$.
We have a descending chain of ideals %(\cite{CJM})
$$\mathfrak{n}^+ = \mathfrak{n}_1 \ge \mathfrak{n}_2 \ge \cdots\ge \mathfrak{n}_i\ge \cdots. $$
For all $k\ge1$, $\mathfrak{n}^+/\mathfrak{n}_k$ is finite dimensional by Lemma~\ref{L-f}\ref{L-f-fin},  
and $\mathfrak{n}_k/\mathfrak{n}_{k+1}$ is central in  $\mathfrak{n}^+/\mathfrak{n}_{k+1}$ by Lemma~\ref{L-f}\ref{L-f-incomp}. 
Since $\bigcap_{k=1}^\infty\fn_k=0$, $\fn^+$ is residually  nilpotent (see \cite{Ku}). 

We define the \emph{(positive formal) completions} of $\frak g$ and $V$ to be
$$\widehat{\fg} = \bigoplus_{m<0}\fg_m \oplus \prod_{m\ge0} {\fg}_m 
\quad\text{and}\quad
\widehat{V}=\bigoplus_{m<0}V_m \oplus \prod_{m\ge0}V_m$$
respectively.
Also define
$$\widehat{\fn}^+_{m}:= \prod_{k\ge m} \fg_{k}  \quad\text{and}\quad
\widehat{N}_{m}:= \prod_{k\ge m} V_{k}$$
for $m\in\Z$, and let $\widehat{\frak n}^+ := \widehat{\frak n}^+_1$.
Again note that each $\wh{N}_{m}$ is an 
 $\wh{\mathfrak n}^+$-module, and
$\wh{\fn}_{m}$ is a Lie algebra for $m\ge0$, but only a $\wh{\mathfrak n}^+$-module for $m<0$.
Note that $\widehat{V}=V$ if the set $f(\wts(V))$ is bounded above.
In particular, this  holds for all Category $\mathcal{O}$ modules, but does not hold for opposite Category $\mathcal{O}$ modules.
When $V$ is the adjoint module of $\fg$, $\widehat{V}$ is naturally isomorphic to the adjoint module of~$\wh\fg$.
We extend $\rho$ from a representation of $\fg$ acting on $V$ to a representation  of $\widehat{\fg}$ acting on $\widehat{V}$.

We have a descending chain of ideals 
$$\wh{\fn}^+ = 
\wh{\fn}_1 \ge \wh{\fn}_2 \ge \cdots\ge \wh{\fn}_k \ge \cdots $$
such that $\wh{\fn}/\wh{\fn}_k$ is isomorphic to $\fn/\fn_k$ for $k\ge0$. 
Thus
$$\widehat{\frak n}^+\cong \varprojlim_{i\geq 1}{\mathfrak{n}}^+/{\mathfrak{n}_i},$$
the \emph{pro-nilpotent completion} of $\mathfrak{n}^+$.
So $\widehat{\frak n}^+$ is pro-nilpotent and $\fn^+\subseteq\wh{\fn}^+$.
Hence our definition of $\wh\fg$ agrees with the definition
$$\widehat{\frak g} = \frak n^- \oplus \frak h \oplus \widehat{\frak n}^+$$
from \cite[Section~IV.4]{Ku}.
Also, for each $m\in\Z$, $\widehat{N}_{m}$ is a pro-module for $\widehat{\mathfrak n}^+$ in the sense of \cite[Definition~IV.4.23]{Ku}.

We  note that $\widehat{V}$ can be considered as a $\fg$-module or as a $\widehat\fg$-module.
%In either case, the module automorphisms are the same, that is,
%$$ \Aut_\fg(\widehat{V}) = \Aut_{\widehat\fg}(\widehat{V})$$, and
If $V$ is integrable, then
$G_V\subseteq \GL(V)\subseteq \GL(\widehat{V})$.
In the case of the adjoint representation $V=\fg$, we have $G_{\ad}:=G_\fg \subseteq\Aut({\fg})\subseteq\Aut(\widehat{\fg})$.

%{\color{magenta} What is difference between $\Aut_{\widehat\fg}(\widehat{\fg})$ and $\Aut(\widehat{\fg})$? $\Aut_{\widehat\fg}(\widehat{\fg})$  is the group of $\widehat\fg$-module automorphisms of the adjoint module $\widehat\fg$, and $\Aut(\widehat{\fg})$ is the groups of Lie algebra automorphisms of $\widehat\fg$. I thought this notation was standard - do I need to add more explanation?}

%%%%%%%%%%%%%%%%%%%%%%%%%%%%%%%%%%%%%%%%%%%%%%%%%%%%%%
\subsection{Pro-summable maps}\label{SS-pro-sum}
The completions $\wh\fg$ and $\wh{V}$ inherit the product topology from  $\prod_{m\in\Z}\fg_m$ and $\prod_{m\in\Z}V_m$, which  induces topologies on $\Aut(\wh\fg)$ and $\GL(\wh{V})$, which are also sequential topological spaces. 

%The Lie algebra $\widehat{\mathfrak{g}}$ has an induced topology as a subset of $\mathfrak{X}:=\prod_{k\in\Z} \frak g_k$ with the product topology. The group $\Aut(\widehat{\frak g})\subseteq \End(\mathfrak{X})$ inherits a natural topology from $\widehat{\frak g}$ as a subspace of the space $\End(\mathfrak{X})$ of linear maps  with the pointwise topology.

Write $\pi_{m}$ for the projection $\wh{V}\rightarrow V_m$.
For $v\in \widehat{V}$, write ${v}_m:=\pi_m v\in {V_m}$  and let $N=N(v)$ be the \emph{order} of~$v$, that is, the smallest integer with $v_{N}\ne0$. 
Then $v\in \widehat{V}$ has a unique expression
$$v=\sum_{m=N}^\infty v_m.$$
Similarly write $\pi_m$ for the projection $\wh{\fg}\rightarrow \fg_m$, so that each $x\in\widehat{\fg}$ has a unique expression $x=\sum_{m=N}^\infty x_m$ for $N=N(x)$ and $x_m=\pi_m x$.

Let $J$ be a countable index set and let $\{\vartheta_j\}_{j\in J}$ be a collection of linear operators $\wh{V}\rightarrow\wh{V}$ such that $\vartheta_j(\wh{N}_k)\subseteq \wh{N}_k$ for all $j\in J, k\in\Z$.
Let $\overline\vartheta_{j,k}$ be the induced map
$\wh{V}/\wh{N}_k\rightarrow\wh{V}/\wh{N}_k$.
We say that $\{\vartheta_j\}_{j\in J}$ is \emph{pro-summable} if   $\{\overline\vartheta_{j,k}\}_{j\in J}$ is summable for all $k\in\Z$.
In other words, for all $v\in \wh{V}$ and $k\in\Z$, the number of $m\le k$ for which $\pi_m(\vartheta_k(v))\ne0$ is finite.
Summability ensures that $\sum_{j\in J}\bar\vartheta_{j,k}$ is a well-defined endomorphism on $\wh{V}/\wh{N}_k$.
Hence we can define an endomorphism on $\wh{V}$ by
$$\sum_{j\in J}\vartheta_j:= \varprojlim_{k\geq 1} \sum_{j\in J}\vartheta_{j,k}.$$
More concretely, 
$$ \pi_m \sum_{j\in J}\vartheta_j(v) := \sum_{j\in J} \pi_m \vartheta_j(v) $$
for all $v\in \wh{V}$.
Note that $\wh{V}/\wh{N}_k\cong
V/N_k$, which is a vector space of finite or countable dimension.
(For a more general definition of pro-summability see \cite[Subsection~3.1]{CJM}).

For convenience, we read the product symbol  from the right to the left. For example,
\begin{align*}
\prod_{k=1}^\infty \exp\left(\rho\left(x_k \right) \right)
&:= \cdots  \exp\left(\rho\left(x_3 \right) \right)  \exp\left(\rho\left(x_2 \right) \right) \exp\left(\rho\left(x_1 \right) \right)\\
&= \lim_{n\to\infty}\,  \exp\left(\rho\left(x_n \right) \right) \cdots  \exp\left(\rho\left(x_2 \right) \right) \exp\left(\rho\left(x_1 \right) \right).
\end{align*}
The following result is now straightforward:
\begin{proposition}\label{P-formulas}
Let $x=\sum_{k=1}^\infty x_k\in\wh\n^+$ for $x_k\in \frak g_k$  for all $k\geq1$. 
Then $\exp(\rho(x))$ and $\prod_{k=n}^\infty \exp\left(\rho\left(x_k\right)\right)$ expand to pro-summable series given by the formulas
\begin{align}
\pi_k\left(\exp(\rho(x)) \cdot v\right)   &= v_k+\sum_{n=1}^{k-N(v)} \sum_{\substack{m,n_1,\dots,n_m>0,\\ j_1,\dots,j_m>0,\text{ s.t.}\\n_1j_1+\cdots+n_mj_m=n}}
\frac{x_{j_m}^{n_m} \cdots x_{j_1}^{n_1}}{n_1!\cdots n_m!}  \cdot v_{k-n},\label{E-x}\\
\pi_k\left(\prod_{j=n}^\infty \exp\left(\rho\left(x_j \right) \right)\cdot v\right)&= v_k+\sum_{n=1}^{k-N(v)} \sum_{\substack{m,n_1,\dots,n_m>0,\\ 0<j_1<\dots<j_m,\text{ s.t.}\\n_1j_1+\cdots+n_mj_m=n}}
\frac{x_{j_m}^{n_m} \cdots x_{j_1}^{n_1}}{n_1!\cdots n_m!}  \cdot v_{k-n},\label{E-sumx}
\end{align}
for $x=\sum_{j=1}^\infty x_j$ and $v=\sum_{i=N(v)}^\infty v_i$.
Hence they define elements of $\GL(\wh{V})$.
In particular, $\exp\left(\ad\left(x \right) \right)$ and $\prod_{k=1}^\infty \exp\left(\ad\left(x_k \right) \right)$
define automorphisms in $\Aut\left(\wh{\fg}\right)$.
\end{proposition}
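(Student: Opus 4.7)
The plan is to reduce everything to the basic degree-raising property of elements of $\widehat{\mathfrak{n}}^+$ acting on $\widehat{V}$, then expand the exponentials formally and check termwise convergence on each $\widehat{V}/\widehat{N}_{k+1}$.

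First, I would record the grading identity $\rho(x_j)(V_i)\subseteq V_{i+j}$ for each homogeneous component $x_j\in \mathfrak{g}_j$ with $j\ge 1$, which holds since $V$ is a standard graded module (Subsection~\ref{SS-Vgraded}). Iterating, an ordered monomial $\rho(x_{j_m})^{n_m}\cdots\rho(x_{j_1})^{n_1}$ maps $V_i$ into $V_{i+n_1j_1+\cdots+n_mj_m}$. Since each $n_s j_s\ge 1$, the shift is strictly positive whenever the monomial is nontrivial, and this forces pro-summability: for fixed $v=\sum_{i\ge N(v)}v_i\in\widehat{V}$ and fixed $k\in\Z$, such a monomial applied to $v_i$ contributes to $\pi_k$ only if $i+n_1j_1+\cdots+n_mj_m=k$, hence $N(v)\le i\le k$ and $n_1j_1+\cdots+n_mj_m\le k-N(v)$. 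Only finitely many tuples $(m,n_1,\ldots,n_m,j_1,\ldots,j_m)$ satisfy these constraints.

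Next, for $\exp(\rho(x))=\sum_{n\ge 0}\rho(x)^n/n!$ with $x=\sum_{j\ge 1}x_j$, I would expand $\rho(x)^n$ non-commutatively as a sum over all ordered $n$-tuples of factors $\rho(x_{j_a})$, then regroup by the multiset of distinct indices appearing with multiplicities $n_1,\ldots,n_m$, the factor $n_1!\cdots n_m!$ in the denominator accounting for the orderings among occurrences of a common index. This produces formula (\ref{E-x}), and the degree bound above ensures that the expression for $\pi_k(\exp(\rho(x))\cdot v)$ is a finite sum, so the series converges in the pro-topology on $\widehat{V}$. For the infinite product $\prod_{k=n}^\infty\exp(\rho(x_k))$, I would observe that for $j>k-N(v)$ any nontrivial power $\rho(x_j)^s$ with $s\ge 1$ sends $v_i$ to degree $i+sj>k$, so modulo $\widehat{N}_{k+1}$ the factor $\exp(\rho(x_j))$ acts as the identity on the image of $v$. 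Hence modulo $\widehat{N}_{k+1}$ the product collapses to a finite product, and expanding the remaining factors and collecting terms — now with the ordering $j_1<\cdots<j_m$ dictated by the order of the product — yields (\ref{E-sumx}).

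Finally, to place these maps in $\GL(\widehat{V})$, I would note that $-x\in\widehat{\mathfrak{n}}^+$ as well, and verify the identity $\exp(\rho(-x))\exp(\rho(x))=\mathrm{id}=\exp(\rho(x))\exp(\rho(-x))$ level by level on $\widehat{V}/\widehat{N}_{k+1}$, where $\rho(x)$ is genuinely nilpotent and the usual exponential identity applies. The same strategy handles the product, using that each partial product is inverted by the reversed product of $\exp(\rho(-x_k))$. For the adjoint case, replacing $\rho$ by $\ad$ throughout yields elements of $\Aut(\widehat{\mathfrak{g}})$, since each $\exp(\ad(x))$ is a Lie algebra automorphism on $\widehat{\mathfrak{g}}/\widehat{\mathfrak{n}}_{k+1}$. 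The main obstacle I anticipate is purely bookkeeping: matching the non-commutative expansion of the exponentials with the multinomial-style presentation in (\ref{E-x}) and (\ref{E-sumx}); once the degree filtration is in hand, the analytic content is minor.
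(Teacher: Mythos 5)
Your overall strategy matches what the paper leaves implicit: its entire proof of this proposition is the remark that ``the formulas are straightforward and show the series are pro-summable,'' plus an appeal to standard properties of exponentials for the adjoint case. Your degree-raising observation $\rho(x_j)(V_i)\subseteq V_{i+j}$, the finiteness of contributing tuples for fixed $v$ and $k$, the collapse of the infinite product modulo $\wh{N}_{k+1}$ (which is exactly how \eqref{E-sumx} arises, with the ordering $j_1<\cdots<j_m$ imposed by the product itself), the level-wise inversion by $\exp(\rho(-x))$, and the adjoint statement are all sound and are precisely the details the paper omits. One small imprecision: on $\wh{V}/\wh{N}_{k+1}$ the operator $\rho(x)$ is only \emph{locally} nilpotent, not nilpotent, since the orders $N(v)$ are unbounded below; the pointwise exponential identity still yields the inverse, so this is cosmetic.

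There is, however, a genuine gap in your derivation of \eqref{E-x}. You expand $\rho(x)^n$ over ordered $n$-tuples and then ``regroup by the multiset of distinct indices,'' with $n_1!\cdots n_m!$ ``accounting for the orderings among occurrences of a common index.'' That regrouping is valid only when the operators $\rho(x_j)$ commute, which they do not in $\wh{\fn}^+$ in general. A word with interleaved indices, such as $\rho(x_1)\rho(x_2)\rho(x_1)$, cannot be collected into a block monomial $x_{j_m}^{n_m}\cdots x_{j_1}^{n_1}$ at all, and the coefficient of a block word of length $p=n_1+\cdots+n_m$ in $\exp(\rho(x))$ is $1/p!$, not $1/(n_1!\cdots n_m!)$: in degree $3$, $\exp(x_1+x_2)$ contains $\tfrac12(x_1x_2+x_2x_1)+\tfrac16 x_1^3$, whereas your regrouping would produce $x_1x_2+x_2x_1+\tfrac16x_1^3$. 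The correct noncommutative expansion of $\exp(\rho(x))\cdot v$ at level $k$ is the sum over all words $\tfrac{1}{p!}\,x_{j_p}\cdots x_{j_1}\cdot v_{k-n}$ with $j_1+\cdots+j_p=n\le k-N(v)$; this word sum is pro-summable by exactly your degree argument and suffices for everything you need afterwards (membership in $\GL(\wh{V})$, invertibility, the adjoint case), but it reduces to the block-monomial shape with multinomial denominators only after commuting factors. So either present the expansion of $\exp(\rho(x))$ as a word sum and note that the block form requires an additional commutation argument, or supply that argument; as written, the step ``this produces formula \eqref{E-x}'' does not follow.
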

\begin{proof}
The formulas are straightforward and show the series are pro-summable.
The fact that these maps are automorphisms in the adjoint case is a standard property of exponential series.
\end{proof}

Note that $\wh\fn^+$ is not generally commutative, so $\prod_{k=1}^\infty \exp\left(\rho\left(x_k \right) \right)$ is not generally equal to 
$\exp\left(\rho\left(\sum_{k=1}^\infty x_k \right) \right)$.

%\begin{corollary}\label{StandGradProSum} If $V$ is a standard graded $\mathfrak g$-module then $\exp\rho(x_\alpha)$ is pro-summable on $\widehat{V}$, for every root~$\alpha$.
%    \end{corollary}
    
\begin{lemma} \label{L-cont}
The maps $\wh\fn^+\rightarrow \GL(\wh{V})$ given by
$$x\mapsto \exp(\rho(x))\quad\text{and}\quad
x=\sum_{k=1}^\infty x_k\mapsto 
\prod_{k=1}^\infty \exp(\rho(x_k))
$$
are continuous.
\end{lemma}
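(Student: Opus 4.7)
The plan is to exploit the explicit formulas of Proposition~\ref{P-formulas}, which exhibit each graded coordinate $\pi_k(\exp(\rho(x))\cdot v)$ as a polynomial in only finitely many components of $x$, and then invoke the elementary fact that polynomial maps between finite-dimensional complex vector spaces are continuous.

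First I would reduce to sequential continuity. The space $\wh\fn^+=\prod_{k\ge 1}\fg_k$ carries the product topology from its finite-dimensional pieces, as does $\wh V$, and the induced topology on $\GL(\wh{V})\subseteq\End(\wh{V})$ is pointwise convergence with respect to the product topology on $\wh{V}$; all of these are sequential. So it suffices to check that whenever $x^{(n)}\to x$ in $\wh\fn^+$ (equivalently, $\pi_j(x^{(n)})\to \pi_j(x)$ in each $\fg_j$), we have $\pi_k(\exp(\rho(x^{(n)}))\cdot v)\to \pi_k(\exp(\rho(x))\cdot v)$ in $V_k$ for every fixed $v\in\wh{V}$ and every $k\in\Z$.

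Next I would apply formula \eqref{E-x}. For fixed $v$ and $k$ the double sum is finite, and the constraint $n_1j_1+\cdots+n_mj_m=n\le k-N(v)$ with $n_i,j_i\ge 1$ forces $j_i\le k-N(v)$ for each $i$. Hence $\pi_k(\exp(\rho(x))\cdot v)$ depends on $x$ only through its first $k-N(v)$ components, and it is a polynomial expression in those components (the factor $v_{k-n}$ being a fixed vector). The resulting map $\fg_1\oplus\cdots\oplus\fg_{k-N(v)}\to V_k$ is polynomial between finite-dimensional complex vector spaces, hence continuous, and componentwise convergence of $x^{(n)}$ restricted to this truncation delivers the desired limit in $V_k$.

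The second map is handled identically using \eqref{E-sumx}: the stricter index constraint $0<j_1<\cdots<j_m$ together with $n_1j_1+\cdots+n_mj_m\le k-N(v)$ still forces $j_i\le k-N(v)$, so the same finite-dimensional polynomial dependence appears. The only step requiring any care is the book-keeping to confirm that both formulas genuinely truncate to expressions in $x_1,\ldots,x_{k-N(v)}$; once that is in hand, the continuity assertion is immediate from continuity of polynomial maps on finite-dimensional spaces, so I do not anticipate any substantive obstacle.
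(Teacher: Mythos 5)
Your proposal is correct and takes essentially the same route as the paper, whose proof likewise deduces continuity directly from the explicit formulas of Proposition~\ref{P-formulas}; your write-up merely makes explicit the key point implicit there, namely that each graded coordinate $\pi_k(\exp(\rho(x))\cdot v)$ is a polynomial function of only the finitely many components $x_1,\dots,x_{k-N(v)}$, so convergence in the product topology suffices. No gaps; this is a slightly more detailed rendering of the paper's own argument.
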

\begin{proof}
The formulas given in Proposition~\ref{P-formulas} show that these maps are continuous when restricted to the subspace $\fg_k$ for $k\ge1$. So the maps are continuous on $\wh\fn^+=\prod_{k=1}^\infty \fg_k$.
\end{proof}
%\textcolor{red}{[{\bf converse?}]
%\begin{lemma}
%Let $x_k, x_{jk}\in \fg_k$ for integers $j\ge1$ and $k\ge n$.
%Then $\lim_{j\to\infty} x_{jk}=x_k$ for all $k$ if, and only if,
%$$\lim_{j\to\infty}\, \prod_{k=1}^\infty\exp(\rho(x_{jk})) =\prod_{k=1}^\infty \exp(\rho(x_k)). $$
%\end{lemma}
%}

We can now extend the definition of $U_V^+$ to the case where $V$ is a non-integrable standard graded module:
$$ U_V^+ := \langle \chi_{\a_i}(t)\mid i\in I,\, t\in\C\rangle \subseteq \GL(\wh{V}).$$
The definition of $H_V$ also extends to this case, however we cannot extend the definition of $U^-_V$ or $G_V$ since $\wh{V}$ is a \emph{positive} completion.

%%%%%%%%%%%%%%%%%%%%%%%%%%%%%%%%%%%%%%%%%%%%%%%%%%%%%%
%%%%%%%%%%%%%%%%%%%%%%%%%%%%%%%%%%%%%%%%%%%%%%%%%%%%%%
\section{Complete groups}\label{prounipgp}
As above, $\fg$ is a Kac--Moody algebra with symmetrizable Cartan matrix $A$.
Let $V$ be a  standard graded module  with grading function $f:P\rightarrow\Z$.
Recall that $V=\sum_{j\in\Z}V_j$,
$\wh{V} = \sum_{j<0}V_j\oplus \prod_{j\ge0}V_j$,
and $\wh{N}_k =\prod_{j\ge k}V_j\subseteq \wh{V}$.

We now introduce some useful terminology from rigorous formulations of quantum field theory (see for example \cite{GJ}):
\begin{defn} Let $m$ be an integer and let $\varphi$ be a linear operator $\wh{V}\to\wh{V}$.
\begin{enumerate}
\item If $\varphi$ satisfies $\varphi(V_k)\subseteq V_{k+m}$ for all $k\in\Z$, we call $\varphi$ an \emph{$m$-shift operator}.
\item If $\varphi$ satisfies $\varphi(v)\in v+\wt{N}_{k+1}$ for all $k\in\Z$ and $v\in V_k$, we call $\varphi$ a \emph{smear operator}.
\end{enumerate}
\end{defn}

Note that $\rho(x_m)$ for $x_m\in\fg_m$ is an $m$-shift operator, and
$\exp(\rho(x))$ for $x\in\wh{\fn}^+$ is a smear operator.

Let $\varphi$ be a smear operator and set $\varphi_{jm}:= \pi_{j+m}\circ \varphi|_{\wh{V}_j} : V_j\to V_{j+m}$.
Then we can define an $m$-shift operator 
$$\varphi_m:\quad \wh{V}\to\wh{V}, \quad v:= \sum_{j=N}^\infty v_j \mapsto \sum_{j=N}^\infty \varphi_{jm}(v_j).$$
It is now  easily seen that $\varphi_m=0$ for $m<0$, $\varphi_0=1_{\wh{V}}$, $\{\varphi_m\}_m$ is pro-summable, and
$$\varphi = \sum_{m\in\Z}\phi_m = 1+\sum_{m=1}^\infty \varphi_m.$$
That is, every smear operator can be written as a pro-summable series of shift operators with leading term~1.

\subsection{The complete pro-unipotent group}
We define the \emph{complete pro-unipotent group} $\wh{U}^+_V$ to be the closure of $U^+_V$ in $\GL(\wh{V})$. This definition is intuitive but not explicit, so first we consider the groups
$$\widehat{{U}}^+_{V,n}=\left\{\prod_{k=n}^\infty \exp\left(\rho\left(x_k \right) \right)\mid x_k\in \fg_k \right\}$$
for $n$ a positive integer.
%We  write $\widehat{{U}}:=\widehat{{U}}_V$ and $\widehat{{U}}_n:=\widehat{U}_{V,n}$ when the module $V$ is clear from context.\
We will eventually prove that $\wh{U}^+_V$ is identical to $\wh{U}^+_{V,1}$ for $V$ faithful (Corollary~\ref{C-completion}).
Note that 
$\widehat{U}^+_{V,1}\supseteq \widehat{U}^+_{V,2}\supseteq\cdots$.

The following standard result follows formally from the  Baker--Campbell--Hausdorff  formula (see, for example, \cite[p.\ 32]{Stern}):
\begin{lemma}\label{BCH}
If $x,y\in \wh{\fn}^+$, then
$$ \exp(\rho(x))\exp (\rho(y)) = \exp(\rho(z))$$
for 
$$z= x+y+\frac{1}{2}[x,y]+\frac{1}{12}\left([x,[x,y]]-[y,[x,y]]\right)+\cdots \in \wh\fn^+$$
with all coefficients in $\Q$.
\end{lemma}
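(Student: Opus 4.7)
The plan is to lift the classical Baker--Campbell--Hausdorff identity from finite-dimensional nilpotent Lie algebras to the pro-nilpotent completion $\widehat{\fn}^+$, and then to transport the identity to $\GL(\wh{V})$ via $\rho$. First I would argue that the formal series
\[
z = x + y + \tfrac{1}{2}[x,y] + \tfrac{1}{12}([x,[x,y]]-[y,[x,y]]) + \cdots
\]
converges in $\wh{\fn}^+$. Writing $x=\sum_{i\ge1}x_i$ and $y=\sum_{j\ge1}y_j$ with $x_i,y_i\in\fg_i$, each nested-bracket term of depth $n$ in the BCH expansion is a linear combination of elements $[z_{k_1},[z_{k_2},\dots]]$ with $z_{k_s}\in\{x_{k_s},y_{k_s}\}$ and $k_s\ge1$, which lies in $\fg_{k_1+\cdots+k_n}$. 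Hence the contributions to $\fg_m$ come only from tuples with $k_1+\cdots+k_n=m$ and $n\le m$, of which there are finitely many. So $z$ is a well-defined element of $\wh{\fn}^+$.

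Next I would reduce the identity $\exp(\rho(x))\exp(\rho(y))=\exp(\rho(z))$ to a statement in the finite-dimensional nilpotent algebra $\wh{\fn}^+/\wh{\fn}_k$ for each $k$. By Proposition~\ref{P-formulas}, $\exp(\rho(x))$, $\exp(\rho(y))$, and $\exp(\rho(z))$ are smear operators on $\wh{V}$, so they induce operators on each quotient $\wh{V}/\wh{N}_k$. The action of $\exp(\rho(x))$ on $\wh{V}/\wh{N}_k$ factors through the image of $x$ modulo $\wh{\fn}_k$, because any shift operator component $\rho(x_j)$ with $j\ge k$ sends $v$ of order $\ge N(v)$ into degrees outside the truncation window in a controlled way. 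Thus on the quotient the three operators equal the exponentials of the images of $x,y,z$ in the finite-dimensional nilpotent Lie algebra $\wh{\fn}^+/\wh{\fn}_k$, acting on $V/N_k$ via the induced representation.

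In that finite-dimensional nilpotent setting, both sides are genuine exponentials of strictly upper-triangular operators, so only finitely many terms of each exponential series are nonzero. The classical BCH identity (see e.g.\ \cite[p.~32]{Stern}) then gives $\exp(\rho(\bar{x}))\exp(\rho(\bar{y}))=\exp(\rho(\bar{z}))$, where the overlines denote images in the quotient and the BCH series reduces to a finite sum with rational coefficients. Crucially, $\bar z$ computed by BCH inside $\wh{\fn}^+/\wh{\fn}_k$ coincides with the image of the infinite series $z$, because all bracket monomials of total degree $\ge k$ map to zero in the quotient.

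Finally I would pass to the projective limit. Since $\wh{V}\cong \varprojlim \wh{V}/\wh{N}_k$ and the identity holds in every quotient compatibly, it follows that $\exp(\rho(x))\exp(\rho(y))=\exp(\rho(z))$ on $\wh{V}$; alternatively, one can invoke continuity of exponentiation (Lemma~\ref{L-cont}) together with the density of $\fn^+$ in $\wh{\fn}^+$ and the fact that BCH holds in every quotient of $\fn^+$ by $\fn_k$. The main subtlety is justifying that $\rho(x)$ for $x\in\wh{\fn}^+$ genuinely descends to an operator on $\wh{V}/\wh{N}_k$ that depends only on $x\bmod \wh{\fn}_k$; this is the place where one must use that $V$ is a standard graded module, so that shifts of positive degree respect the filtration $\wh{N}_k$ — everything else is then a formal consequence of the classical finite-dimensional BCH formula.
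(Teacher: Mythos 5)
Your opening step is fine and is the real content here: the paper itself offers no argument beyond citing the Baker--Campbell--Hausdorff formula, and your observation that the BCH series for $z$ converges degreewise in $\wh{\fn}^+$ (each $\fg_m$-component receives only finitely many bracket monomials, by \ref{L-f-lin} and \ref{L-f-fin}) is exactly why $z$ is well defined. The gap is in your reduction to the quotients $\wh{V}/\wh{N}_k$. You claim that the operator induced by $\exp(\rho(x))$ on $\wh{V}/\wh{N}_k$ depends only on $x$ modulo $\wh{\fn}^+_k$, so that everything happens in a representation of the finite-dimensional nilpotent algebra $\wh{\fn}^+/\wh{\fn}^+_k$. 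That is false whenever the grading of $V$ is unbounded below, which is precisely the situation for the adjoint module and for Category $\mathcal{O}$ modules. Concretely, in the adjoint module take $x_k\in\fg_k\subseteq\wh{\fn}^+_k$ and $v\in\fg_{-k}$ with $[x_k,v]\ne0$: then $x_k$ maps to $0$ in $\wh{\fn}^+/\wh{\fn}^+_k$, yet $\exp(\ad(x_k))v\equiv v+[x_k,v]\not\equiv v \pmod{\wh{N}_k}$ since $[x_k,v]\in\fg_0$. Positive shifts do preserve the filtration $\wh{N}_k$, so the induced operator on the quotient exists, but $\rho(\wh{\fn}^+_k)$ does not map $\wh{V}$ into $\wh{N}_k$, so there is no induced representation of $\wh{\fn}^+/\wh{\fn}^+_k$ on $\wh{V}/\wh{N}_k$ and the appeal to the classical finite-dimensional BCH identity in that quotient does not go through as stated.

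The repair is to let the truncation depend on the vector as well as on the output degree. Fix $v$ with order $N=N(v)$ and a degree $m$: by Proposition~\ref{P-formulas}, $\pi_m$ of either side applied to $v$ involves only the components $x_j,y_j,z_j$ with $j\le m-N$, and $\wh{N}_{N}/\wh{N}_{m+1}$ is a finite-dimensional module for the finite-dimensional nilpotent Lie algebra $\fn^+/\fn_{m-N+1}$, on which these truncated elements act nilpotently (every positive shift raises degree); the classical BCH formula applies there, and letting $m$ and $v$ vary gives the identity on all of $\wh{V}$. Equivalently, one can treat BCH as a formal identity in the degree-completed enveloping algebra of $\wh{\fn}^+$ and evaluate it via pro-summability. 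Note that the same caveat applies to your closing alternative: even for $x\in\fn^+$, the operator $\rho(x)$ need not be nilpotent on $V$ (imaginary root vectors are the whole point of the paper), so density of $\fn^+$ plus Lemma~\ref{L-cont} does not let you fall back on the finite-dimensional case directly; the vectorwise (or formal) argument is needed there too.
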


The proof of the following lemma describing  $\Uhp_{V, n}$ and its cosets  is now similar to the proof of Lemma~3.11 in~\cite{CJM}. 
\begin{proposition}
    \label{L-nilpexp}   For positive integers $n$ and $m$,  we have: 
\begin{enumerate}
\item\label{L-nilpexp-Uhat}  $\Uhp_{V,n}=\exp\left(\rho\left(\wh{\fn}^+_n\right)\right)$.
\item\label{L-nilpexp-coset}  $\exp\left(\rho(x)\right) \Uhp_{V,n+1} = \exp\left(\rho(x + \widehat{\fn}^+_{n+1})\right)$,  for $x\in\fg_n$.
\item\label{L-nilpexp-sum}  $\exp\left(\rho(x+y)\right) \Uhp_{V,n+1} = \exp\left(\rho(x))\exp(\rho(y)\right) \Uhp_{V,n+1}$,   for $x,y\in\fg_n$.
\item\label{L-nilpexp-prod}   $\exp\left(\rho\left([x,y]\right)\right)\Uhp_{V,n+m+1} = \left( \exp\left(\rho(x)\right), \exp\left(\rho(y)\right) \right)\Uhp_{V,n+m+1}$,  for $x\in\fg_n$ and $y\in\fg_m$.
\end{enumerate}
\end{proposition}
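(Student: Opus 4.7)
My plan is to establish part~(\ref{L-nilpexp-Uhat}) first, identifying $\Uhp_{V,n}$ with $\exp(\rho(\wh{\fn}^+_n))$, and then to deduce parts~(\ref{L-nilpexp-coset})--(\ref{L-nilpexp-prod}) by running the Baker--Campbell--Hausdorff formula (Lemma~\ref{BCH}) modulo the ideal filtration $\wh{\fn}^+_n\supseteq\wh{\fn}^+_{n+1}\supseteq\cdots$. The crucial observation making the reductions work is that $[\fg_i,\fg_j]\subseteq\fg_{i+j}$, so every iterated bracket of total length $\geq 2$ produced by BCH from elements of $\wh{\fn}^+_n$ and $\wh{\fn}^+_m$ lands in $\wh{\fn}^+_{n+m}$.

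For the forward inclusion in~(\ref{L-nilpexp-Uhat}), I iteratively apply Lemma~\ref{BCH} to merge consecutive factors in $\prod_{k\geq n}\exp(\rho(x_k))$, producing a partial expression $\exp(\rho(z^{(M)}))\cdot\prod_{k>M}\exp(\rho(x_k))$ with $z^{(M)}\in\wh{\fn}^+_n$ and $z^{(M+1)}-z^{(M)}-x_{M+1}\in\wh{\fn}^+_{M+2}$. The sequence $(z^{(M)})$ converges to some $z\in\wh{\fn}^+_n$ in the product topology, and continuity of exponentiation (Lemma~\ref{L-cont}) yields $\exp(\rho(z))$ equal to the original product. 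For the reverse inclusion, given $x=\sum_{k\geq n}x_k\in\wh{\fn}^+_n$, I peel off graded components: set $y_n=x_n$, and inductively, after having chosen $y_n,\dots,y_{n+M-1}$, use Lemma~\ref{BCH} to write $\exp(-\rho(y_{n+M-1}))\cdots\exp(-\rho(y_n))\exp(\rho(x))=\exp(\rho(x^{(M)}))$ with $x^{(M)}\in\wh{\fn}^+_{n+M}$, and set $y_{n+M}=\pi_{n+M}(x^{(M)})\in\fg_{n+M}$. The partial products $\prod_{k=n}^{n+M}\exp(\rho(y_k))$ converge in $\GL(\wh V)$ to $\exp(\rho(x))$ because the remaining tail factors lie in $\wh{\fn}^+_{n+M+1}$, which acts trivially on $V/N_k$ for $M$ large by the explicit formula of Proposition~\ref{P-formulas}. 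This limit argument is the main obstacle: BCH is a priori only a formal bracket series, and one must carefully combine the filtration structure with the continuity statement to certify convergence in both the $\wh{\fn}^+$ topology and the $\GL(\wh V)$ topology.

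Parts~(\ref{L-nilpexp-coset})--(\ref{L-nilpexp-prod}) then follow routinely from~(\ref{L-nilpexp-Uhat}) and Lemma~\ref{BCH}. For~(\ref{L-nilpexp-coset}), any element of $\exp(\rho(x))\Uhp_{V,n+1}$ equals $\exp(\rho(x))\exp(\rho(y))$ for some $y\in\wh{\fn}^+_{n+1}$ by~(\ref{L-nilpexp-Uhat}), and Lemma~\ref{BCH} writes this as $\exp(\rho(x+y+\text{higher brackets}))$ with all higher brackets lying in $[\fg_n,\wh{\fn}^+_{n+1}]+\cdots\subseteq\wh{\fn}^+_{n+1}$; the reverse inclusion is symmetric. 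For~(\ref{L-nilpexp-sum}), the BCH expansion of $\exp(\rho(x))\exp(\rho(y))$ with $x,y\in\fg_n$ has leading term $x+y$ and all higher terms in $\fg_{2n}\oplus\fg_{3n}\oplus\cdots\subseteq\wh{\fn}^+_{n+1}$ (using $n\geq 1$), so~(\ref{L-nilpexp-coset}) gives the equality. For~(\ref{L-nilpexp-prod}), the standard BCH expansion of the group commutator $(\exp(\rho(x)),\exp(\rho(y)))$ has leading term $[x,y]\in\fg_{n+m}$ with higher terms in $\fg_{2n+m}\oplus\fg_{n+2m}\oplus\cdots\subseteq\wh{\fn}^+_{n+m+1}$, so applying~(\ref{L-nilpexp-coset}) at level $n+m$ gives the required coset identity.
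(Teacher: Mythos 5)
Your proposal is correct and takes essentially the same route as the paper: part~(\ref{L-nilpexp-Uhat}) via the Baker--Campbell--Hausdorff formula (Lemma~\ref{BCH}) combined with the graded filtration $\wh{\fn}^+_n\supseteq\wh{\fn}^+_{n+1}\supseteq\cdots$ and a limit in the product topology, and parts~(\ref{L-nilpexp-coset})--(\ref{L-nilpexp-prod}) by reading the BCH expansion modulo the filtration; in fact you spell out both inclusions of~(\ref{L-nilpexp-Uhat}), whereas the paper only writes the peeling direction $\exp(\rho(\wh{\fn}^+_n))\subseteq\Uhp_{V,n}$ explicitly. One cosmetic point: your peeling multiplies by $\exp(-\rho(y_k))$ on the left, so the resulting factorization $\exp(\rho(y_n))\exp(\rho(y_{n+1}))\cdots$ has the lowest-degree factor leftmost, which is the reverse of the right-to-left ordering used in the definition of $\Uhp_{V,n}$; either peel on the right, as the paper does, or note that your merging argument shows the two orderings produce the same set.
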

\begin{proof}
%Lemma~\ref{L-explim} shows that $\exp\left(\rho\left(\widehat{\fn}^+_n\right)\right)$ is a closed set and 
Suppose $x\in\widehat{\fn}^+_n$. Take  $x_n\in\fg_n$ with $x-x_n\in\widehat{\frak n}^+_{n+1}$.
So Lemma~\ref{BCH} gives us  $$x^{(n+1)} = x-x_n+ \frac{1}{2}\left[x,-x_n\right]+\cdots\in\widehat{\frak n}^+_{n+1}$$ such that
$$\exp\left(\rho\left(x\right)\right) \exp\left(\rho\left(-x_n\right)\right)= \exp\left(\rho\left(x^{(n+1)}\right)\right)$$
and so
$$\exp\left(\rho\left(x\right)\right)= \exp\left(\rho\left(x^{(n+1)}\right)\right) \exp\left(\rho\left(x_n\right)\right).$$
By induction we get
$$\exp\left(\rho(x)\right)= \exp\left(\rho\left(x^{(N+1)}\right)\right) \prod_{\ell=n}^N\exp\left(\rho\left(x_l\right)\right) $$
with $x^{(N+1)}\in\widehat{\frak n}^+_{N+1}$.
From the definition of the product topology it is clear that 
$$
\lim_{N\to\infty}\exp\left(\rho\left(x^{(N+1)}\right)\right)=1,$$
and so $\exp\left(\rho(x)\right)=\prod_{\ell=n}^\infty\exp\left(\rho\left(x_\ell\right)\right)\in \Uhp_n.$
Thus $\exp\left(\rho\left(\widehat{\frak n}^+_n\right)\right) \subseteq \Uhp_n$ and \eqref{L-nilpexp-Uhat} is proved. 

Parts~\eqref{L-nilpexp-coset} and \eqref{L-nilpexp-sum} are now straightforward, and (\ref{L-nilpexp-prod}) follows from Lemma~\ref{BCH}.
\end{proof}
\begin{corollary} $\exp\circ\rho$ induces  a group isomorphism between the additive group $\widehat{\frak n}^+_{n}/\widehat{\frak n}^+_{n+1}$ and the multiplicative group $\Uhp_{V,n}/\Uhp_{V,n+1}$.
\end{corollary}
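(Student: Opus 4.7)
The plan is to define the natural candidate $\bar{\phi}\colon \widehat{\fn}^+_n/\widehat{\fn}^+_{n+1} \to \Uhp_{V,n}/\Uhp_{V,n+1}$ by $x + \widehat{\fn}^+_{n+1} \mapsto \exp(\rho(x))\Uhp_{V,n+1}$, and to verify well-definedness, the homomorphism property, surjectivity, and injectivity using the four parts of Proposition~\ref{L-nilpexp}.

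As a preliminary, I would verify that $\Uhp_{V,n+1}$ is normal in $\Uhp_{V,n}$, so that the codomain is actually a group. This follows from Lemma~\ref{BCH} together with the degree estimate $[\widehat{\fn}^+_n, \widehat{\fn}^+_{n+1}] \subseteq \widehat{\fn}^+_{2n+1}\subseteq \widehat{\fn}^+_{n+1}$: conjugating $\exp(\rho(y))$ (for $y \in \widehat{\fn}^+_{n+1}$) by $\exp(\rho(x))$ (for $x \in \widehat{\fn}^+_n$) yields $\exp(\rho(y'))$ where $y' = y + [x,y] + \tfrac{1}{2}[x,[x,y]] + \cdots \in \widehat{\fn}^+_{n+1}$, the iterated commutator series converging by pro-summability.

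Next, using conditions \ref{L-f-lin} and \ref{L-f-fin}, the vector-space decomposition $\widehat{\fn}^+_n = \fg_n \oplus \widehat{\fn}^+_{n+1}$ identifies the additive quotient $\widehat{\fn}^+_n/\widehat{\fn}^+_{n+1}$ with $\fg_n$. For any $x \in \widehat{\fn}^+_n$, write $x = x_n + x'$ with $x_n \in \fg_n$ and $x' \in \widehat{\fn}^+_{n+1}$. Part~(\ref{L-nilpexp-coset}) then yields $\exp(\rho(x))\Uhp_{V,n+1} = \exp(\rho(x_n))\Uhp_{V,n+1}$, which depends only on $x$ modulo $\widehat{\fn}^+_{n+1}$, so $\bar\phi$ is well-defined on the quotient. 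Part~(\ref{L-nilpexp-sum}) gives the homomorphism property. Surjectivity follows from part~(\ref{L-nilpexp-Uhat}): any $g \in \Uhp_{V,n}$ equals $\exp(\rho(y))$ for some $y \in \widehat{\fn}^+_n$, whence $g\,\Uhp_{V,n+1} = \bar\phi(y + \widehat{\fn}^+_{n+1})$ by the same decomposition.

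The main obstacle will be injectivity. Suppose $x \in \fg_n$ and $\exp(\rho(x)) \in \Uhp_{V,n+1}$; by part~(\ref{L-nilpexp-Uhat}), $\exp(\rho(x)) = \exp(\rho(z))$ for some $z \in \widehat{\fn}^+_{n+1}$. I would evaluate both sides on a weight vector $v \in V_k$ and compare $V_{k+n}$-components: the left side contributes $\rho(x)v$, since $\rho(x)^j v \in V_{k+jn}$ has degree $\geq k+2n > k+n$ for $j\geq 2$; the right side contributes $0$, since every nontrivial monomial in the expansion of $\exp(\rho(z))$ shifts degree by at least $n+1$. Hence $\rho(x) = 0$ on all of $V$, and the faithfulness that holds for the standard graded modules of interest (the adjoint, faithful integrable, and faithful Category~$\mathcal{O}$ or opposite Category~$\mathcal{O}$ modules) forces $x = 0$.
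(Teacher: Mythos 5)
Your proposal is correct, and it is essentially the argument the paper leaves implicit — but it is substantially more complete than what the paper records. The paper's entire proof is the citation of Proposition~\ref{L-nilpexp}(\ref{L-nilpexp-sum}) and~(\ref{L-nilpexp-prod}), which only accounts for the homomorphism property (and, via the commutator formula, for normality and abelianness of the quotient $\Uhp_{V,n}/\Uhp_{V,n+1}$); well-definedness, surjectivity and injectivity are not addressed there. You supply all of these: the decomposition $\wh{\fn}^+_n=\fg_n\oplus\wh{\fn}^+_{n+1}$ together with part~(\ref{L-nilpexp-coset}) for well-definedness, part~(\ref{L-nilpexp-Uhat}) for surjectivity, Lemma~\ref{BCH} for normality (where the paper would presumably lean on part~(\ref{L-nilpexp-prod})), and a genuinely new leading-term argument for injectivity: comparing $V_{k+n}$-components of $\exp(\rho(x))v$ and $\exp(\rho(z))v$ with $z\in\wh{\fn}^+_{n+1}$ forces $\rho(x)=0$, hence $x=0$. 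The one caveat is that your injectivity step uses faithfulness, which the paper only imposes later in the section (``For the rest of this section, we assume that $V$ is faithful''), not at the point where this corollary is stated; some such hypothesis is in fact needed, since if a nonzero $x\in\fg_n$ acted trivially on $V$ then $\exp(\rho(x))=1\in\Uhp_{V,n+1}$ while $x\notin\wh{\fn}^+_{n+1}$, and the induced map would fail to be injective. So your argument is best read as identifying and repairing a gap in the paper's terse proof rather than diverging from it; note also that all you really need is injectivity of $\rho$ on $\fn^+$ (not on all of $\fg$), which holds, for instance, for the adjoint module even though its kernel is the center.
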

\begin{proof} Proposition~\ref{L-nilpexp}(\ref{L-nilpexp-sum}) and~(\ref{L-nilpexp-prod}).\end{proof}

\begin{corollary}
$\displaystyle \bigcap_{k=1}^\infty \wh{U}_{V,k}=1$, so $\wh{U}_{V,1}$ is residually nilpotent.
\end{corollary}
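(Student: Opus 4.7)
The plan is to show the intersection $\bigcap_{k\geq1}\wh{U}_{V,k}$ is trivial by a direct ``depth'' argument leveraging the explicit exponential formula from Proposition~\ref{P-formulas}, and then to deduce residual nilpotency from the commutator estimate in Proposition~\ref{L-nilpexp}\eqref{L-nilpexp-prod}.

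First, I take $u\in\bigcap_{k\geq1}\wh{U}_{V,k}$. By Proposition~\ref{L-nilpexp}\eqref{L-nilpexp-Uhat}, for each $k\geq 1$ there exists $x^{(k)}\in\widehat{\fn}^+_k$ with $u=\exp(\rho(x^{(k)}))$. Fix $v=\sum_{j\geq N(v)}v_j\in\wh{V}$ and an integer $p$; the goal is to show $\pi_p(u\cdot v)=v_p$. Applying formula~\eqref{E-x} (with projection index $p$ in place of the displayed $k$) to $x=x^{(k)}$, every surviving term of the inner double sum requires indices $j_1,\ldots,j_r$ satisfying both $j_i\geq k$ (since $x^{(k)}_j=0$ for $j<k$) and $n_1j_1+\cdots+n_rj_r\leq p-N(v)$. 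Choosing $k>p-N(v)$ makes these constraints incompatible, so the double sum vanishes and $\pi_p(u\cdot v)=v_p$. Varying $p$ gives $u\cdot v=v$, and varying $v$ gives $u=1$.

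For the residual nilpotency conclusion, I would lift Proposition~\ref{L-nilpexp}\eqref{L-nilpexp-prod} by induction on commutator length, using Lemma~\ref{BCH} to absorb error terms into $\widehat{\fn}^+_{n+m+1}$, to the inclusion $[\wh{U}_{V,n},\wh{U}_{V,m}]\subseteq\wh{U}_{V,n+m}$ for all $n,m\geq1$. In particular each $\wh{U}_{V,k}$ is normal in $\wh{U}_{V,1}$, and the lower central series $\Gamma_i$ of $\wh{U}_{V,1}/\wh{U}_{V,k}$ satisfies $\Gamma_i\subseteq\wh{U}_{V,i}/\wh{U}_{V,k}$, so $\Gamma_k=1$. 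Thus $\wh{U}_{V,1}/\wh{U}_{V,k}$ is nilpotent of class at most $k-1$, and combined with the trivial intersection just established, this is residual nilpotency by definition.

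The principal obstacle I anticipate is bookkeeping rather than substance: one must confirm that the depth threshold $k>p-N(v)$ depends only on $p$ and the order $N(v)$, not on the (possibly infinite) tail of $v$ in the positive direction. This is immediate from formula~\eqref{E-x}, whose outer index is truncated at $p-N(v)$ regardless of how many nonzero components $v$ has above index $p$; hence a single sufficiently large $k$ kills the entire contribution to $\pi_p(u\cdot v)-v_p$ uniformly.
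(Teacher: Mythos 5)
Your proof is correct and takes essentially the same route as the paper, whose entire proof is the instruction to use Proposition~\ref{L-nilpexp}(\ref{L-nilpexp-Uhat}): you are simply making explicit why an element lying in $\exp(\rho(\widehat{\fn}^+_k))$ for every $k$ must act as the identity on $\wh{V}$ (via the formula of Proposition~\ref{P-formulas}) and why each quotient $\wh{U}_{V,1}/\wh{U}_{V,k}$ is nilpotent (via Lemma~\ref{BCH} and Proposition~\ref{L-nilpexp}(\ref{L-nilpexp-prod})). This is the intended filling-in of the paper's one-line argument, and it correctly avoids any appeal to faithfulness of $V$, which is not assumed at this point.
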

\begin{proof} Use Proposition~\ref{L-nilpexp}(\ref{L-nilpexp-Uhat}).
\end{proof}

Given $\a,\b\in\Delta$, we define the \emph{root span} to be  $\Sigma(\a,\b)=\bigcup_{n=1}^\infty \Sigma_n$, where $\Sigma_n$ is defined inductively:
\begin{align*}
 \Sigma_1&= \{\a+\b\}\cap\Delta; &
 \Sigma_{n} &= \left\{\g+\a,\g+\b \mid \g\in\Sigma_{n-1} \right\}\cap\Delta.
\end{align*}
Note that $\Sigma(\a,\b)=\emptyset$ if and only if $\a+\b\notin\Delta$,
and $\Sigma(\a,\b)\subseteq \left\{a\a+b\b \mid a,b\in\Z_{\ge0} \right\}\cap\Delta$. 
%This inclusion is always an equality for $\Delta$ finite, but we will see in the following proposition that we can get strict inequality in our root system.
%Also note that $\Sigma(\b,\a)= \Sigma(\a,\b)$ and $\Sigma(-\a,-\b)=-\Sigma(\a,\b)$.
%\begin{lemma}\label{BCHcomm} For $x,y\in\fn$ and $u,v\in\C$ we have
%\begin{align*}
%\left(\exp(ux),\exp(vy)\right)&=\exp\left(c_{1,1}uv[x,y]+c_{2,1}u^2v[x,[x,y]]+c_{1,2}uv^2[y,[x,y]]\right. \\
%&\qquad\left. +c_{3,1}u^3v[x,[x,[x,y]]]+c_{2,2}u^2v^2[y,[x,[x,y]]]+\cdots \right)
%\end{align*}
%where $c_{i,j}\in\Q$ are constants independent of $x,y,u,v$.
%\end{lemma}

\begin{proposition}\label{T-commrel2}
Suppose $\a,\b\in\Delta^+$, $x_{\a}\in\fg_{\a}$, $y_{\b}\in\fg_{\b}$. 
Then there are unique $z_{\g}\in\fg_{\g}$ for each
$\g\in\Sigma(\a,\b)$ such that
$$\left(\exp\left(u\rho\left(x_{\a}\right)\right),\exp\left(v\rho\left(y_{\b}\right)\right)\right) =\prod_{\g=i\a+j\b\in\Sigma(\a,\b)} \exp\left(u^i v^j\rho\left(z_{\g}\right)\right)$$
holds in $\wh{U}^+_{V,1}$ for every $u,v\in\C$.
In particular, $z_{\a+\b}=\left[x_{\a},y_{\b}\right]$.
%, and each $z_\g$ can be written as an expression in the free Lie algebra generated by $\left\{x_{\a},y_{\b}\right\}$, independent of the choice of $\,\Pi$.
\end{proposition}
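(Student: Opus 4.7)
The plan is to realize the commutator as a single exponential via the Baker--Campbell--Hausdorff identity, control its weight support, and then extract the factors $\exp(u^i v^j\rho(z_\gamma))$ one graded level at a time using Proposition~\ref{L-nilpexp}.

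First, I would apply Lemma~\ref{BCH} three times to rewrite
$$\exp(u\rho(x_\alpha))\exp(v\rho(y_\beta))\exp(-u\rho(x_\alpha))\exp(-v\rho(y_\beta)) = \exp(\rho(w))$$
for some $w\in\wh\fn^+$. BCH presents $w$ as a formal series of nested Lie brackets in the arguments $ux_\alpha$ and $vy_\beta$ with rational coefficients, and such a bracket is nonzero only if both $x_\alpha$ and $y_\beta$ appear (since $[x_\alpha,x_\alpha]=[y_\beta,y_\beta]=0$). Collecting by bidegree, this yields a decomposition
$$w = \sum_{i,j\ge 1} u^i v^j\, w_{ij}, \qquad w_{ij}\in\fg_{i\alpha+j\beta}.$$
The Jacobi identity rewrites each nested bracket as a linear combination of left-normed brackets of the form $[[\ldots[[x_\alpha,y_\beta],z_1],\ldots],z_k]$ with $z_i\in\{x_\alpha,y_\beta\}$, and induction on $k$ shows that any nonzero such bracket lies in $\fg_\gamma$ for some $\gamma\in\Sigma(\alpha,\beta)$: the base case $[x_\alpha,y_\beta]\in\fg_{\alpha+\beta}$ matches $\Sigma_1$, and the inductive step matches the recursive definition of $\Sigma_n$. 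Hence $w_{ij}=0$ unless $i\alpha+j\beta\in\Sigma(\alpha,\beta)$.

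Next, I would extract the factors $z_\gamma$ by induction on the grading level. Order $\Sigma(\alpha,\beta)$ as $\gamma_1,\gamma_2,\ldots$ with $f(\gamma_1)\le f(\gamma_2)\le\cdots$ and write $\gamma_r=i_r\alpha+j_r\beta$. Suppose $z_{\gamma_r}\in\fg_{\gamma_r}$ has been chosen for all $r$ with $f(\gamma_r)<m$. The right-to-left partial product formed from these factors, multiplied by $\exp(\rho(w))^{-1}$, lies in $\Uhp_{V,m}$ by Proposition~\ref{L-nilpexp}(\ref{L-nilpexp-coset}) and~(\ref{L-nilpexp-sum}). The corollary following Proposition~\ref{L-nilpexp} identifies $\Uhp_{V,m}/\Uhp_{V,m+1}$ with the additive group $\bigoplus_{f(\gamma)=m}\fg_\gamma$ via $\exp\circ\rho$, and the class of the product in this quotient is a sum, over the $\gamma_r$ with $f(\gamma_r)=m$, of terms in $\fg_{\gamma_r}$ each bihomogeneous of bidegree $(i_r,j_r)$ in $(u,v)$ (as inherited from $w$). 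This both forces the existence and pins down the uniqueness of $z_{\gamma_r}$. Passing to the projective limit, the infinite product is pro-summable by Proposition~\ref{L-nilpexp}(\ref{L-nilpexp-Uhat}), and the identity $z_{\alpha+\beta}=[x_\alpha,y_\beta]$ drops out of the lowest-order BCH computation, which gives $\exp(\rho(w))\equiv\exp(uv\rho([x_\alpha,y_\beta])) \pmod{\Uhp_{V,f(\alpha)+f(\beta)+1}}$.

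The most delicate step is the combinatorial containment of weights in $\Sigma(\alpha,\beta)$ together with the preservation of the monomial dependence $u^iv^j$ through the reordering. Correction terms arising when one collects factors in the chosen order are governed by Proposition~\ref{L-nilpexp}(\ref{L-nilpexp-prod}): a commutator $\bigl(\exp(u^iv^j\rho(z_\gamma)),\exp(u^{i'}v^{j'}\rho(z_{\gamma'}))\bigr)$ is concentrated in weight $\gamma+\gamma'$ with bidegree $(i+i',j+j')$, which remains inside $\Sigma(\alpha,\beta)$ by its recursive closure property. This preserves both the weight support and the polynomial structure at every inductive stage, and is the chief bookkeeping hurdle to making the argument watertight.
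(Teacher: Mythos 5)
First, a point of reference: the paper states Proposition~\ref{T-commrel2} without any proof, so there is no argument of record to compare yours against; your proposal is in effect supplying the missing proof, and the route you take (Baker--Campbell--Hausdorff for the group commutator, a support analysis of the resulting Hausdorff series, then level-by-level extraction of factors using Proposition~\ref{L-nilpexp} and the quotient isomorphism $\Uhp_{V,m}/\Uhp_{V,m+1}\cong\widehat{\fn}^+_m/\widehat{\fn}^+_{m+1}$) is the natural adaptation of the classical Chevalley commutator argument to this completed setting. Your first paragraph is essentially right: every mixed Lie monomial in $x_\alpha,y_\beta$ lies in the span of left-normed brackets with innermost factor $[x_\alpha,y_\beta]$, and a nonzero such bracket has all partial weights in $\Delta$, hence has weight in $\Sigma(\alpha,\beta)$ by the recursion. (Two small points there: the degree-one terms of the commutator's Hausdorff series cancel for a reason not covered by $[x_\alpha,x_\alpha]=0$, and the spanning statement deserves the one-line argument that the span of those left-normed brackets is the ideal generated by $[x_\alpha,y_\beta]$ in the subalgebra generated by $x_\alpha,y_\beta$, whose quotient is abelian.)

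The one genuine gap is in your last paragraph. You justify that the correction commutators stay supported on $\Sigma(\alpha,\beta)$ ``by its recursive closure property,'' but the definition only closes $\Sigma(\alpha,\beta)$ under adding a single $\alpha$ or $\beta$ when the result is a root; it does not give that $\gamma+\gamma'\in\Sigma(\alpha,\beta)$ whenever $\gamma,\gamma'\in\Sigma(\alpha,\beta)$ and $\gamma+\gamma'\in\Delta$, and your extraction step also silently uses this support claim when asserting that the class in $\Uhp_{V,m}/\Uhp_{V,m+1}$ of the comparison element at stage $m$ involves only the root spaces $\fg_{\gamma_r}$ with $\gamma_r\in\Sigma(\alpha,\beta)$. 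The repair is already implicit in your first paragraph: let $\mathfrak{m}$ be the ideal generated by $[x_\alpha,y_\beta]$ inside the subalgebra generated by $x_\alpha$ and $y_\beta$; it is spanned by the left-normed brackets above, so $\mathfrak{m}\subseteq\bigoplus_{\sigma\in\Sigma(\alpha,\beta)}\fg_\sigma$, it is a subalgebra, and $w$ lies in its closure. If you run the induction choosing each $z_{\gamma_r}$ inside $\mathfrak{m}\cap\fg_{\gamma_r}$, then every bracket produced by Lemma~\ref{BCH} at stage $m$ lies in $\mathfrak{m}$, so its homogeneous components automatically sit in root spaces indexed by $\Sigma(\alpha,\beta)$, and no additive closure of $\Sigma(\alpha,\beta)$ is needed. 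With that substitution (and noting that when $\alpha,\beta$ are linearly independent the bidegree $(i,j)$ is determined by $\gamma$, which is what makes the coefficients independent of $u,v$ and pins down uniqueness; for proportional $\alpha,\beta$ the statement's indexing itself is ambiguous, which is an issue with the proposition rather than with you), your argument goes through.
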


\begin{theorem}\label{T-pro-unip} 
$\wh{U}_{V,1}^+$ is a pro-unipotent group  since
$\widehat{U}^+_{V,1}\cong \varprojlim_{i\geq 1}\wh{U}_{V,1}^+/{\wh{U}}_{V,i}$.
\end{theorem}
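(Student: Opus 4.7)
The plan is to establish the isomorphism $\widehat{U}^+_{V,1}\cong \varprojlim_{i\geq 1}\wh{U}^+_{V,1}/\wh{U}^+_{V,i}$ while simultaneously exhibiting each quotient $\wh{U}^+_{V,1}/\wh{U}^+_{V,i}$ as a finite-dimensional unipotent algebraic group, which together give the pro-unipotent structure. The ingredients are all in place: Proposition~\ref{L-nilpexp}, Proposition~\ref{T-commrel2}, Lemma~\ref{BCH}, Lemma~\ref{L-cont}, and the residual nilpotence corollary above.

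First I would verify that $\wh{U}^+_{V,i}$ is a normal subgroup of $\wh{U}^+_{V,1}$ for every $i\ge 1$. By Proposition~\ref{L-nilpexp}\eqref{L-nilpexp-Uhat} it suffices to conjugate generators $\exp(\rho(x_\alpha))$ with $f(\alpha)=1$ against $\exp(\rho(y_\beta))$ with $f(\beta)\ge i$. Proposition~\ref{T-commrel2} expresses the commutator as a product of terms $\exp(\rho(z_\gamma))$ with $\gamma=a\alpha+b\beta$, $a\ge 0$, $b\ge 1$, so $f(\gamma)\ge i$ and the commutator lies in $\wh{U}^+_{V,i+1}\subseteq \wh{U}^+_{V,i}$. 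Density and continuity then promote this to normality for all of $\wh{U}^+_{V,1}$.

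Next I would identify the quotient. By Proposition~\ref{L-nilpexp}\eqref{L-nilpexp-Uhat}, $\exp\circ\rho$ sends $\wh{\fn}^+_1$ onto $\wh{U}^+_{V,1}$ and $\wh{\fn}^+_i$ onto $\wh{U}^+_{V,i}$. Since $\wh{\fn}^+_1/\wh{\fn}^+_i\cong \fn^+_1/\fn^+_i$ is finite-dimensional (Lemma~\ref{L-f}\ref{L-f-fin}) and nilpotent (each graded piece being central modulo the next by Lemma~\ref{L-f}\ref{L-f-incomp}), the Baker--Campbell--Hausdorff formula (Lemma~\ref{BCH}) descends to a finite polynomial group law on this quotient, so $\wh{U}^+_{V,1}/\wh{U}^+_{V,i}$ is a finite-dimensional unipotent algebraic group over $\C$ isomorphic to $\exp(\rho(\fn^+_1/\fn^+_i))$.

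Then I would construct the canonical homomorphism
$$\Phi:\wh{U}^+_{V,1}\longrightarrow \varprojlim_{i\geq 1}\wh{U}^+_{V,1}/\wh{U}^+_{V,i}$$
induced by the projections, and prove it is a bijection. Injectivity follows from $\bigcap_{i\ge 1}\wh{U}^+_{V,i}=1$, already established. For surjectivity, given a compatible family $(g_i\wh{U}^+_{V,i})_{i\ge 1}$, I would build an element of $\wh{U}^+_{V,1}$ mapping to it by inductive refinement: using Proposition~\ref{L-nilpexp}\eqref{L-nilpexp-coset}--\eqref{L-nilpexp-sum}, lift $g_i$ to a representative of the form $\exp(\rho(x_1))\cdots\exp(\rho(x_{i-1}))$ with $x_k\in\fg_k$, compatible with the chosen lifts for smaller indices. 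The infinite product $\prod_{k=1}^\infty \exp(\rho(x_k))$ then converges in $\wh{U}^+_{V,1}$ by Proposition~\ref{P-formulas} and Lemma~\ref{L-cont}, and projects to $g_i \wh{U}^+_{V,i}$ at every level.

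The main obstacle is the clean execution of the inductive lifting step for surjectivity: one must verify that the adjustments between consecutive representatives can be absorbed into a single degree-$k$ factor $\exp(\rho(x_k))$ without disturbing lower-degree factors, which is exactly what Proposition~\ref{L-nilpexp}\eqref{L-nilpexp-coset}--\eqref{L-nilpexp-sum} are designed to guarantee. Once this refinement is carried out, continuity of the exponential product (Lemma~\ref{L-cont}) ensures that the limit lies in $\wh{U}^+_{V,1}$, closing the argument.
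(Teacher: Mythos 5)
Your proposal is correct in substance, but it establishes the key point by a different mechanism than the paper. The paper's proof is representation-theoretic: it notes that $\wh{U}^+_{V,n}\cdot\wh{N}_k\subseteq\wh{N}_k$, so that $\wh{U}^+_{V,1}/\wh{U}^+_{V,k}$ acts faithfully on the module quotient $\wh{V}/\wh{N}_k$ as a unipotent group, and then records the projective-limit description of an infinite product $\prod_{k=1}^\infty\exp(\rho(x_k))$ via its finite truncations in a single display. You instead obtain the unipotent algebraic structure on the quotients intrinsically from the Lie algebra side: $\exp\circ\rho$ identifies $\wh{U}^+_{V,1}/\wh{U}^+_{V,i}$ with (an image of) the finite-dimensional nilpotent algebra $\fn_1/\fn_i$ equipped with the polynomial Baker--Campbell--Hausdorff group law (Lemma~\ref{BCH}, Lemma~\ref{L-f}\ref{L-f-fin}, \ref{L-f-incomp}), and you verify the inverse-limit isomorphism explicitly, with injectivity from $\bigcap_{i\ge1}\wh{U}^+_{V,i}=1$ and surjectivity by inductive lifting through Proposition~\ref{L-nilpexp}\eqref{L-nilpexp-Uhat}--\eqref{L-nilpexp-sum}. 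Your route buys a self-contained verification of the isomorphism that the paper only sketches, and it sidesteps any analysis of the kernel of the action on $\wh{V}/\wh{N}_k$; the paper's route is shorter and realizes the quotients directly as unipotent groups of linear maps on module quotients.

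Two small repairs are needed. First, restricting to generators $\exp(\rho(x_\a))$ with $f(\a)=1$ is insufficient: by Proposition~\ref{L-nilpexp}\eqref{L-nilpexp-Uhat}, elements of $\wh{U}^+_{V,1}$ involve $\exp(\rho(x_k))$ with $x_k\in\fg_k$ for all $k\ge1$ (and $\fg_1$ can even vanish when the scaling constant $a$ in the grading exceeds $1$); your commutator argument via Proposition~\ref{T-commrel2} applies verbatim to a generator of any degree $n\ge1$, landing in $\wh{U}^+_{V,i+n}\subseteq\wh{U}^+_{V,i}$. Second, the ``density and continuity'' step tacitly uses that $\wh{U}^+_{V,i}$ is closed, which the paper deduces only after this theorem; argue directly instead, either via Lemma~\ref{BCH} (for $x\in\wh\fn^+$ and $y\in\wh\fn^+_i$, the conjugate $\exp(\rho(x))\exp(\rho(y))\exp(\rho(-x))=\exp(\rho(z))$ has $z\in\wh\fn^+_i$ since $\wh\fn^+_i$ is an ideal), or by splitting an arbitrary element of $\wh{U}^+_{V,1}$ as a finite product of homogeneous exponentials times a tail lying in $\wh{U}^+_{V,i}$, which normalizes $\wh{U}^+_{V,i}$ trivially because the latter is a group. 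Finally, since $V$ is not assumed faithful at this point, the map from $\fn_1/\fn_i$ to $\wh{U}^+_{V,1}/\wh{U}^+_{V,i}$ may have a kernel; it corresponds to a graded ideal, hence a closed subgroup in BCH coordinates, so the quotient is still a finite-dimensional unipotent algebraic group and your conclusion stands.
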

\begin{proof}
Let $k\ge n\ge1$. It is easily shown that 
$\wh{U}_{V,n}^+\cdot \wh{N}_k^+\subseteq \wh{N}_k$.
Hence $\wh{U}_{V,1}/\wh{U}_{V,k}$ acts faithfully on $\wh{V}/\wh{N}_k$ as a unipotent group. It now follows that $\wh{U}_{V,1}^+$ is pro-unipotent.
We have
\begin{align*} \prod_{k=1}^\infty \exp\left(\rho\left(x_k \right) \right) &=\varprojlim_{i} \prod_{k=1}^i \exp\left(\rho\left(x_k \right) \right) \wh{U}_{V,1}^+ .\qedhere
\end{align*}
%and so $\wh{U}_{V,1}^+ =\varprojlim_{i}\wh{U}_{V,i}^+ $.
\end{proof}
%Elements of  {$\widehat{{U}}^+$} are pro-summable of the form {$g=\prod_{k=1}^\infty \exp(\ad(x_k))$} for {$x_k\in\widehat{\mathfrak{n}}_k=\prod_{i\ge k} \mathfrak{g}_{i}$}.

%Any element  {$x\in \widehat{\frak g}$} is an infinite (formal) sum, but {$g\cdot x$} is finite (of unbounded length) when projected to any finite dimensional component $\frak g_k$ of  $\widehat{\frak g}$. 

\begin{corollary}
$\widehat{{U}}^+_{V,n}$ is a closed subgroup of $\widehat{{U}}^+_{V,1}$.
\end{corollary}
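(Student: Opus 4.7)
The plan is to establish the subgroup and closedness properties separately. For the subgroup property, Proposition~\ref{L-nilpexp}(\ref{L-nilpexp-Uhat}) identifies $\widehat{U}^+_{V,n}$ with $\exp(\rho(\widehat{\fn}^+_n))$, and because $\widehat{\fn}^+_n$ is a Lie ideal of $\widehat{\fn}^+_1$, the BCH formula (Lemma~\ref{BCH}) shows that products of two such exponentials remain in this image; inverses are given by $\exp(\rho(x))^{-1} = \exp(\rho(-x))$.

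The closedness argument rests on an explicit characterization:
\[
\widehat{U}^+_{V,n} = \bigl\{\, g \in \widehat{U}^+_{V,1} \mid \pi_{q+m}(g\cdot v) = 0 \text{ for all } q\in\Z,\ v\in V_q,\ 0<m<n \,\bigr\}.
\]
Granted this, closedness is immediate: for each fixed $q$, $v\in V_q$, and $m$ with $0<m<n$, the map $g\mapsto\pi_{q+m}(g\cdot v)$ is continuous from $\widehat{U}^+_{V,1}$ (with its subspace topology from the product topology on $\GL(\widehat{V})$) to the finite-dimensional space $V_{q+m}$, so its fibre over $0$ is closed, and $\widehat{U}^+_{V,n}$ is the intersection of all such fibres. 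The forward inclusion reads off from formula~\eqref{E-sumx}: for $g=\prod_{k\ge n}\exp(\rho(x_k))$ and $v\in V_q$, the defining sum for $\pi_{q+m}(g\cdot v)$ is indexed by tuples with each $j_i\ge n$ and $\sum n_i j_i=m$, which is empty when $0<m<n$.

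The main work is the reverse inclusion. Fix any factorization $g=\prod_{k\ge 1}\exp(\rho(x_k))$ and induct on $m$ from $1$ to $n-1$: at stage $m$, the inductive hypothesis has reduced all factors with $k<m$ to the identity, so $g=\prod_{k\ge m}\exp(\rho(x_k))$, and formula~\eqref{E-sumx} isolates the unique tuple contributing to the shift-$m$ term as $(j_1,n_1)=(m,1)$, giving $\pi_{q+m}(g\cdot v)=\rho(x_m)(v)$. The assumed vanishing then forces $\rho(x_m)=0$, so $\exp(\rho(x_m))=1$ and this factor can be deleted from the product, advancing the induction. The delicate point is exactly this deletion step: the factorization $g=\prod\exp(\rho(x_k))$ is generally not unique when $\rho$ fails to be faithful, so one has to check that deleting an identity factor from the infinite product preserves both convergence and the limiting value---transparent in the product topology on $\GL(\widehat{V})$, but the only place in the argument where the infinite product structure enters nontrivially.
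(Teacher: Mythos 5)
Your argument is correct, but it takes a genuinely different route from the paper. The paper disposes of the corollary in one line: by Theorem~\ref{T-pro-unip}, $\widehat{U}^+_{V,1}\cong\varprojlim_i \widehat{U}^+_{V,1}/\widehat{U}^+_{V,i}$, and the subgroups $\widehat{U}^+_{V,n}$ are closed in the resulting pro-unipotent topology, which the authors assert coincides with the subspace topology inherited from $\GL(\widehat{V})$. You instead work directly in $\GL(\widehat{V})$: you cut out $\widehat{U}^+_{V,n}$ inside $\widehat{U}^+_{V,1}$ by the vanishing of the shift components $\pi_{q+m}(g\cdot v)$ for $0<m<n$, prove that characterization via formula~\eqref{E-sumx} together with an induction that strips identity factors off the right end of the product, and then get closedness from continuity of the maps $g\mapsto\pi_{q+m}(g\cdot v)$ into the finite-dimensional spaces $V_{q+m}$. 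What this buys is self-containedness: you never invoke the (unproved) identification of the pro-unipotent topology with the operator topology, you need no faithfulness hypothesis, and you handle the genuine subtlety — non-uniqueness of the factorization $g=\prod_k\exp(\rho(x_k))$ when $\rho$ is not faithful — correctly, by observing that the vanishing forces $\rho(x_m)=0$ as an operator, so the deleted factor is literally the identity of $\GL(\widehat{V})$ and the partial products are unchanged. Two cosmetic remarks: for the subgroup property it suffices that $\widehat{\fn}^+_n$ is a closed subalgebra (the ideal property you cite is true but not needed) before applying Lemma~\ref{BCH} and Proposition~\ref{L-nilpexp}(\ref{L-nilpexp-Uhat}); and your continuity step implicitly reads the topology on $\GL(\widehat{V})$ as that of pointwise convergence on $\widehat{V}$, which is the natural reading of Subsection~\ref{SS-pro-sum} and is consistent with how limits are used elsewhere in the paper (e.g.\ in the proof of Proposition~\ref{L-nilpexp}).
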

\begin{proof} By the construction of the pro-unipotent group $\wh{U}^+_{V,1}$, the subgroups $\wh{U}^+_{V,n}$ are closed in the pro-unipotent topology on $\wh{U}^+_{V,1}$, which is identical to the original topology on $\wh{U}^+_{V,1}\subseteq\GL(\wh{V})$.
\end{proof}

For the rest of this section, we assume that $V$ is faithful.
\begin{lemma}\label{homeo}
If $V$ is faithful, then 
$x\mapsto \exp(\rho(x))$ and 
$
x=\sum_{k=n}^\infty x_k\mapsto 
\prod_{k=n}^\infty \exp(\rho(x_k))$
are both 
homeomorphisms
$\wh{\fn}^+_n\rightarrow\widehat{{U}}^+_{V,n}$.
\end{lemma}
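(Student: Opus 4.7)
The plan is as follows. Continuity of both maps is already Lemma~\ref{L-cont}, and surjectivity is immediate---the first map by Proposition~\ref{L-nilpexp}(\ref{L-nilpexp-Uhat}), and the second by the very definition of $\widehat{{U}}^+_{V,n}$. The substantive tasks are therefore injectivity and continuity of the inverse, both of which rest on extracting the leading graded piece of an element.

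For injectivity, I would exploit the explicit pro-summable expansions of Proposition~\ref{P-formulas} together with faithfulness of $V$. Writing $x=\sum_{k\ge n}x_k$ and evaluating on a weight vector $v_j\in V_j$, equation~\eqref{E-x} shows that the $V_{j+n}$-component of $\exp(\rho(x))v_j$ equals $\rho(x_n)v_j$, since the only tuple satisfying $n_1j_1+\cdots+n_mj_m=n$ with $j_i\ge n$ is $m=1$, $n_1=1$, $j_1=n$; the same holds for the product map via equation~\eqref{E-sumx}. Thus if $\exp(\rho(x))=\exp(\rho(y))$, then ranging over all weight vectors gives $\rho(x_n)=\rho(y_n)$ on $V$, and faithfulness of $V$ forces $x_n=y_n$. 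To pass to level $n+1$, in the product case I simply right-multiply by $\exp(\rho(x_n))^{-1}$ to peel off the first factor and obtain equality of $\prod_{k\ge n+1}\exp(\rho(x_k))$ and $\prod_{k\ge n+1}\exp(\rho(y_k))$. In the single-exponential case, Lemma~\ref{BCH} gives $\exp(\rho(-x_n))\exp(\rho(x))=\exp(\rho(\tilde{x}))$ with $\tilde{x}\in\widehat{\fn}^+_{n+1}$, because $-x_n+x\in\widehat{\fn}^+_{n+1}$ while $[-x_n,x]=[-x_n,x-x_n]\in\widehat{\fn}^+_{2n+1}\subseteq\widehat{\fn}^+_{n+1}$ and higher brackets lie in even deeper filtration pieces. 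Iterating yields $x_k=y_k$ for every $k\ge n$.

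For continuity of the inverse, I would pass to successive quotients. Both filtrations $\widehat{\fn}^+_n\supseteq\widehat{\fn}^+_{n+1}\supseteq\cdots$ and $\widehat{{U}}^+_{V,n}\supseteq\widehat{{U}}^+_{V,n+1}\supseteq\cdots$ are respected by both maps, and by Theorem~\ref{T-pro-unip} the target is the corresponding inverse limit, with the pro-unipotent topology agreeing with the subspace topology from $\GL(\widehat V)$. The induced maps on successive quotients $\widehat{\fn}^+_m/\widehat{\fn}^+_{m+1}\cong\fg_m\to\widehat{{U}}^+_{V,m}/\widehat{{U}}^+_{V,m+1}$ are continuous bijections---bijectivity follows from the Corollary to Proposition~\ref{L-nilpexp} together with the injectivity established above, and each quotient is a finite-dimensional vector space, respectively a finite-dimensional unipotent algebraic group, so the continuous bijection is automatically a homeomorphism. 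Taking the inverse limit then delivers continuity of the inverse globally.

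The main obstacle I expect is the injectivity step, specifically ensuring that the leading-term analysis really isolates $\rho(x_n)$ on the nose (no hidden contributions from higher $x_k$) and that the BCH corrections in the single-exponential case stay in $\widehat{\fn}^+_{n+1}$ so the induction closes cleanly. Once those two points are verified, faithfulness of $\rho$ on each graded piece $\fg_m$---which is automatic from $\rho$ being injective on $\fg=\bigoplus_m\fg_m$---does the rest of the work.
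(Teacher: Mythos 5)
Your proof is correct and takes essentially the same route as the paper's (much terser) argument: faithfulness together with the expansions of Proposition~\ref{P-formulas} for injectivity, Proposition~\ref{L-nilpexp} and the definition of $\widehat{U}^+_{V,n}$ for surjectivity, and the identification of the subspace topology on $\widehat{U}^+_{V,n}$ with the product (pro-unipotent) topology for bicontinuity. The only point to tighten is the iteration in your single-exponential injectivity step---peel off the entire partial sum $x_n+\cdots+x_k$ at each stage, or observe that the BCH corrections depend only on components already shown to agree---which is routine and does not change the approach.
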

\begin{proof}
Since $\fg$ acts faithfully of $V$, it follows that $\wh{\fg}$ acts faithfully on $\wh{V}$. Hence the
second map is one-to-one. It is onto by the definition of $\widehat{{U}}^+_{V,n}$. It is a homeomorphism because $\wh{\fn}^+_n$ and $\widehat{{U}}^+_{V,n}$ have the same product topology.
It is now straightforward to show that the first map is a homeomorphism using Proposition~\ref{P-formulas} and Proposition~\ref{L-nilpexp}.
\end{proof}

\begin{theorem}\label{P-gen}
Let $V$ be faithful and let $X$ be a subset of $\bigcup_{k=1}^\infty \fg_k$. %$\wh\fn^+$.
Let $\wh{\mathfrak{s}}$ be the closure in $\wh\fn^+$ of the subalgebra generated by $X$ and let $\wh{S}$ be the closure in $\GL(\wh{V})$
of the group generated by $\{\exp(t\rho(x))\mid t\in\C,\,x\in X\}.$
Then $\wh{S}=\exp(\rho(\wh{\mathfrak{s}})).$
\end{theorem}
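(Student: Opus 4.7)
The plan is to establish the two inclusions $\wh{S} \subseteq \exp(\rho(\wh{\mathfrak{s}}))$ and $\exp(\rho(\wh{\mathfrak{s}})) \subseteq \wh{S}$ separately, using the homeomorphism $\exp\circ\rho\colon \wh{\fn}^+ \to \wh{U}^+_{V,1}$ of Lemma~\ref{homeo} to transfer between closed subsets on the Lie algebra side and closed subgroups on the group side. For the first inclusion, continuity of the Lie bracket on $\wh{\fn}^+$ in the product topology (where each graded piece $\fg_k$ is finite-dimensional) shows that $\wh{\mathfrak{s}}$, being the closure of the ungraded subalgebra $\mathfrak{s}$ generated by $X$, is itself a Lie subalgebra of $\wh{\fn}^+$. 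By Lemma~\ref{BCH}, for $y,z \in \wh{\mathfrak{s}}$ the product $\exp(\rho(y))\exp(\rho(z))$ equals $\exp(\rho(w))$ where $w$ is the BCH series in $y$ and $z$; its partial sums lie in $\wh{\mathfrak{s}}$ (using that $\wh{\mathfrak{s}}$ is closed under brackets) and converge in the product topology, so $w \in \wh{\mathfrak{s}}$, while inverses come from $-y \in \wh{\mathfrak{s}}$. Thus $\exp(\rho(\wh{\mathfrak{s}}))$ is a subgroup of $\wh{U}^+_{V,1}$, and it is closed there as the homeomorphic image of the closed set $\wh{\mathfrak{s}}$ under Lemma~\ref{homeo}; since $\wh{U}^+_{V,1}$ is closed in $\GL(\wh{V})$ by Theorem~\ref{T-pro-unip} (its pro-unipotent topology coincides with the subspace topology from $\GL(\wh{V})$), $\exp(\rho(\wh{\mathfrak{s}}))$ is closed in $\GL(\wh{V})$. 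Since it contains every generator $\exp(t\rho(x))$, we conclude $\wh{S} \subseteq \exp(\rho(\wh{\mathfrak{s}}))$.

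For the reverse inclusion, the first step gives $\wh{S} \subseteq \wh{U}^+_{V,1}$, so by Lemma~\ref{homeo} there is a unique closed subset $\mathfrak{t} \subseteq \wh{\fn}^+$ with $\exp(\rho(\mathfrak{t})) = \wh{S}$. It suffices to show $\mathfrak{t}$ is a closed Lie subalgebra containing $\C \cdot X$, for then $\mathfrak{s} \subseteq \mathfrak{t}$, and closedness forces $\wh{\mathfrak{s}} \subseteq \mathfrak{t}$. Containment of $\C \cdot X$ is immediate from the choice of generators for $\wh{S}$. For the subalgebra structure, I would reduce to the finite-dimensional setting: for each $k \geq 1$, the quotient $\wh{U}^+_{V,1}/\wh{U}^+_{V,k+1}$ is a finite-dimensional unipotent algebraic group with Lie algebra $\wh{\fn}^+_1/\wh{\fn}^+_{k+1}$, and the image of $\wh{S}$ there is a closed subgroup, corresponding by the classical bijection between finite-dimensional nilpotent Lie algebras and their unipotent groups to a Lie subalgebra. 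These pieces assemble, via the projective presentation in Theorem~\ref{T-pro-unip}, into the claim that $\mathfrak{t}$ is closed under addition, scalar multiplication, and bracket. Equivalently, one can argue directly: for $y, z \in \mathfrak{t}$ and $t \in \C$, the Trotter-type identity $\lim_n \bigl( \exp(\rho(ty/n))\exp(\rho(tz/n)) \bigr)^n = \exp(\rho(t(y+z)))$ in the product topology places $y+z \in \mathfrak{t}$, and a parallel commutator-conjugate limit (using Proposition~\ref{T-commrel2} and Proposition~\ref{L-nilpexp}) handles $[y,z]$.

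The main obstacle is establishing this Lie-subalgebra property for $\mathfrak{t}$: it packages the reduction to each finite-dimensional nilpotent quotient with a projective-limit step, or equivalently the Trotter and commutator convergence arguments in the pro-nilpotent topology. Everything else follows routinely from Lemmas~\ref{BCH} and~\ref{homeo}, together with the definition of $\wh{\mathfrak{s}}$ as a topological closure.
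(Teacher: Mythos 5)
Your first inclusion ($\wh{S}\subseteq\exp(\rho(\wh{\mathfrak{s}}))$) is essentially the paper's argument: closure of $\mathfrak{s}$ under the continuous bracket, Lemma~\ref{BCH} to see $\exp(\rho(\wh{\mathfrak{s}}))$ is a group, and Lemma~\ref{homeo} to see it is closed; that part is fine. The problem is the reverse inclusion. You set $\mathfrak{t}=(\exp\circ\rho)^{-1}(\wh{S})$ and want to show it is a closed Lie subalgebra, but both mechanisms you offer break down. The ``classical bijection'' you invoke does not match \emph{closed subgroups} (in the analytic topology of a complex unipotent group) with complex Lie subalgebras: $\R$, $\Z$ and lattices are closed subgroups of $\C^+$ that are not exponentials of $\C$-subalgebras, so knowing that the image of $\wh{S}$ in $\wh{U}^+_{V,1}/\wh{U}^+_{V,k+1}$ is a closed subgroup does not make it $\exp$ of a subalgebra. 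And your Trotter/commutator limits need $ty/n,\,tz/n\in\mathfrak{t}$, i.e.\ closure of $\mathfrak{t}$ under scalar multiplication; you list this among the properties to be proved but give no mechanism for it, and for the bare preimage of a closed subgroup it is not available (same counterexamples) --- establishing it is essentially the theorem itself. So, as written, the key step you yourself identify as the main obstacle is not carried out.

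The missing ingredient is exactly what the paper uses: the generators of $S$ are \emph{entire} one-parameter subgroups $\exp(\C\rho(x))$, and in each finite-dimensional nilpotent quotient $\wh{U}^+_{V,1}/\wh{U}^+_{V,N}$ the subgroup generated by such one-parameter subgroups equals $\exp$ of the subalgebra generated by the corresponding elements --- the standard nilpotent-group fact from Bourbaki, realized here by repeated application of Propositions~\ref{T-commrel2} and~\ref{L-nilpexp}. The paper then takes $y\in\wh{\mathfrak{s}}$, approximates it by $y_j\in\mathfrak{s}$, concludes $\exp(\rho(y_j))\in S\,\wh{U}^+_{V,N}$ for every $N$, and uses closedness of $\wh{S}$ to finish. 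Your second route can be repaired by replacing $\mathfrak{t}$ with $\mathfrak{t}'=\{y\in\wh{\fn}^+ \mid \exp(\C\rho(y))\subseteq\wh{S}\}$: this set is closed under scalars by definition, contains $X$ by the choice of generators, is closed under addition and bracket because your Trotter and commutator limits now legitimately apply, and is topologically closed by the continuity in Lemma~\ref{L-cont}; then $\wh{\mathfrak{s}}\subseteq\mathfrak{t}'$ yields $\exp(\rho(\wh{\mathfrak{s}}))\subseteq\wh{S}$. Without such a repair, the reverse inclusion has a genuine gap.
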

\begin{proof}
Let $E=\exp(\rho({\mathfrak{s}}))$,
where $\mathfrak{s}$ is the subalgebra generated by $X$.
By Lemma~\ref{homeo} for $n=1$,  $\exp \circ \rho$ is a homeomorphism $\wh{\fn}^+ \to \wh{U}^+_{V,1}$, so the closure of $E$ is $\wh{E}=\exp(\rho(\wh{\fs}))$.
Note that $\wh{E}$ is a group by Lemma~\ref{BCH}.

Let $S$ be the group generated by $ \{\exp(t\rho(x))\mid t\in\C,\,x\in X\}$. Then $\wh{S}$ is the closure of $S$ by definition.
For every $x \in X$ and $t\in\C$, we have $tx \in {\fs}$, and so $\exp(t\rho(x)) = \exp(\rho(tx)) \in E$.
Hence $S\subseteq E$ and so $\wh{S}\subseteq\wh{E}$.

It remains to show that $\wh{E} \subseteq \wh{S}$.
Let $y \in \wh{\fs}$, that is $y=\lim_{j\to\infty} y_j$ for some  $y_j$ in $\fs$. Then each $y_j$ can be written as an expression in elements of $X$, using scalar products, sums, and Lie products.
For any fixed positive integer $N$,
$\exp(\rho(y_j))\wh{U}^+_{V,N}$ is in $S\wh{U}^+_{V,N}$, by repeated application of Propositions~\ref{T-commrel2} and~\ref{L-nilpexp}
(note that this is a standard result for nilpotent groups \cite[Chapter~III, §6]{Bourbaki1-3} applied to $\wh{U}^+_{V,1}/\wh{U}^+_{V,N}$.)
Taking $N\to\infty$, we get $\exp(\rho(y_j))$ in $\wh{E}$. Since $\wh{E}$ is closed, $\exp\rho(y)=\lim_{j\to\infty} \exp\rho(y_j)\in \wh{E}$ as required.
\end{proof}

%Let $X_k=X\cap\fg_k$, let $S$ be the group generated by $\{\exp(t\rho(x))\mid t\in\C,\,x\in X\}$, let $E:=\exp(\rho(\mathfrak{s}))$.
% $\exp\circ\rho$ is a homeomorphism, the closure of $E$ is $\wh{E}:=\exp(\rho(\wh{\mathfrak{s}}))$.
%In this proof we write $\wh{U}_k$ for $\wh{U}^+_{V,k}$.
%We need to show that $\wh{S}\wh{U}_{k}/\wh{U}_{k}=E\wh{U}_{k}/\wh{U}_{k}$ for all $k\ge1$ or, equivalently, that $\wh{S}\wh{U}_{k}= E\wh{U}_{k}$. This is obvious for $k=1$, since $E,\wh{S}\subseteq \wh{U}_{1}$.
%Note that $\wh{S}\wh{U}_k= \wh{U}_k\wh{S}$ and $E\wh{U}_k= \wh{U}_kE$ since $\wh{U}_k$ is normal in $\wh{U}_1$.

%Assume, for induction, that
%$\wh{S}\wh{U}_{k-1}= E\wh{U}_{k-1}$. Suppose $x\in \wh{S}$ and $e\in E$. Then $s\wh{U}_k=e\wh{U}_k$ if, and only if,  

%$x\in  u\in\wh{U}_{k}$. By induction there exists $e\in E$ such that   $s\wh{U}_{k-1}=e\wh{U}_{k-1}$.
%SO $e^{-1}s\in \wh{U}_{k-1}$, ie 
%$$e^{-1}s\wh{U}_{k}= \exp(\rho(v))\wh{U}_{k}$$
%for some $v\in\fg_{k-1}$.

%Now $e=\lim_{j\to\infty} x_j$ for

%Since  $\exp\circ\rho$ is a homeomorphism and $\wh{\mathfrak{s}}$ is closed, 
%$E$ is closed, and so $E\subseteq \wh{S}$.

\begin{corollary}\label{C-completion}
For $V$ faithful,  $\widehat{U}^+_{V,1}=\wh{U}^+_V$, the topological closure of $U^+_V$ in $\GL(\wh{V})$.
\end{corollary}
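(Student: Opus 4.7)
The plan is to apply Theorem~\ref{P-gen} to the set $X=\{e_i\mid i\in I\}$ of positive simple root vectors, which lie in $\bigcup_{k\ge 1}\fg_k$ because $f(\a_i)>0$ for each $i$. With this choice, the generating set $\{\exp(t\rho(x))\mid t\in\C,\ x\in X\}$ is exactly $\{\chi_{\a_i}(t)\mid i\in I,\ t\in\C\}$, whose generated group is by definition $U_V^+$. Hence the resulting $\wh{S}$ from Theorem~\ref{P-gen} is precisely the closure of $U_V^+$ in $\GL(\wh V)$, namely $\wh U_V^+$.

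Next, I would identify the subalgebra $\fs$ generated by $X$ in $\wh\fn^+$: by the standard description of Kac--Moody algebras via generators and Serre relations (Section~\ref{SS-alg}, especially the triangular decomposition and the defining relations \ref{Lef}--\ref{Lad}), the Lie subalgebra of $\fg$ generated by $\{e_i\mid i\in I\}$ is exactly $\fn^+=\bigoplus_{k\ge 1}\fg_k$. Therefore the key calculation is to identify the closure $\wh\fs$ of $\fn^+$ inside $\wh\fn^+$ and to show that $\wh\fs=\wh\fn^+$.

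For this density claim, fix any $x=\sum_{k=1}^\infty x_k\in\wh\fn^+=\prod_{k\ge 1}\fg_k$. The partial sums $s_N:=\sum_{k=1}^N x_k$ lie in $\fn^+\subseteq\fs$, and by the very definition of the product topology on $\wh\fn^+$, $s_N\to x$ as $N\to\infty$, since for every fixed $m\ge 1$ we have $\pi_m(s_N)=x_m$ once $N\ge m$. Hence $\fn^+$ is dense in $\wh\fn^+$, giving $\wh\fs=\wh\fn^+$.

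Combining these pieces, Theorem~\ref{P-gen} yields
\[
\wh U_V^+\ =\ \wh S\ =\ \exp\bigl(\rho(\wh\fs)\bigr)\ =\ \exp\bigl(\rho(\wh\fn^+)\bigr),
\]
and Proposition~\ref{L-nilpexp}\eqref{L-nilpexp-Uhat} (with $n=1$) identifies $\exp(\rho(\wh\fn^+))$ with $\wh U^+_{V,1}$, completing the argument. There is no real obstacle here: the work was already done in Theorem~\ref{P-gen} and Proposition~\ref{L-nilpexp}; the only substantive observation is the density of $\fn^+$ in $\wh\fn^+$, and faithfulness of $V$ enters only implicitly through the hypotheses of Theorem~\ref{P-gen} (which used Lemma~\ref{homeo} requiring $V$ faithful).
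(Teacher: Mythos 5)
Your argument is correct and is exactly the derivation the paper intends: the corollary is stated as an immediate consequence of Theorem~\ref{P-gen}, applied with $X=\{e_i\mid i\in I\}$ so that $S=U_V^+$, combined with the density of $\fn^+=\bigoplus_{k\ge1}\fg_k$ in $\wh\fn^+=\prod_{k\ge1}\fg_k$ and the identification $\exp(\rho(\wh\fn^+))=\wh{U}^+_{V,1}$ from Proposition~\ref{L-nilpexp}\eqref{L-nilpexp-Uhat}. No gaps; this matches the paper's (implicit) proof.
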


Using Theorem~\ref{T-indep} and Corollary~\ref{C-completion} we obtain the following.

\begin{corollary}
For $V$ faithful, $\wh{U}_V$ is independent of the choice of $V$.
\end{corollary}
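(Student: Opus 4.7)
The plan is to show that for any two faithful standard graded $\fg$-modules $V$ and $V'$, there is a canonical topological group isomorphism $\wh{U}_V^+\to \wh{U}_{V'}^+$, given by $\exp(\rho_V(x))\mapsto \exp(\rho_{V'}(x))$ for $x\in \wh{\fn}^+$. The key point is that $\wh{\fn}^+$ (as a topological Lie algebra) is a purely Lie-algebraic object depending only on $\fg$, and the group law inherited from $\wh{U}_V^+$ is given by a universal formula.

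First, I would invoke Corollary~\ref{C-completion}, which says that for any faithful $V$ one has $\wh{U}_V^+=\wh{U}_{V,1}^+$, and then Proposition~\ref{L-nilpexp}(\ref{L-nilpexp-Uhat}) to rewrite this as $\wh{U}_V^+=\exp(\rho_V(\wh{\fn}^+))$. By Lemma~\ref{homeo}, the map $\Phi_V:=\exp\circ\rho_V:\wh{\fn}^+\to \wh{U}_V^+$ is a homeomorphism; faithfulness of $V$ is essential here so that $\Phi_V$ is injective.

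Next, I would transport the group law from $\wh{U}_V^+$ to $\wh{\fn}^+$. By the Baker--Campbell--Hausdorff formula (Lemma~\ref{BCH}), there is a universal $\Q$-coefficient Lie series $x\star y\in\wh{\fn}^+$ such that $\exp(\rho(x))\exp(\rho(y))=\exp(\rho(x\star y))$ for \emph{any} representation $\rho$ of $\wh\fg$. In particular, $\star$ depends only on the Lie bracket of $\wh{\fn}^+$, not on $V$. Thus $(\wh{\fn}^+,\star)$ is an intrinsic topological group, and $\Phi_V:(\wh{\fn}^+,\star)\to \wh{U}_V^+$ is an isomorphism of topological groups. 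The composition $\Phi_{V'}\circ \Phi_V^{-1}:\wh{U}_V^+\to \wh{U}_{V'}^+$ is then the desired canonical isomorphism.

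Finally, I would observe compatibility with Theorem~\ref{T-indep}: when $V$ and $V'$ are both faithful integrable modules, the canonical isomorphism above sends the generator $\chi_{\alpha_i}(t)=\exp(\rho_V(te_i))$ of $U_V^+$ to $\chi_{\alpha_i}(t)=\exp(\rho_{V'}(te_i))$ of $U_{V'}^+$, which is exactly the identification supplied by Theorem~\ref{T-indep}. No real obstacle arises; the only subtlety to check is that the filtrations $\{\wh{\fn}_n^+\}$ coming from the (possibly $V$-dependent) grading function $f$ generate the same topology on $\wh{\fn}^+$, which is immediate because $f|_Q=a\cdot\htt$ for a positive integer $a$, so different choices of $f$ produce cofinal filtrations by height.
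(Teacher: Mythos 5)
Your proposal is correct, but it follows a different and more explicit route than the paper. The paper's own proof is a one-liner: it combines Theorem~\ref{T-indep} (independence of $U_V^+$ from $V$, quoted from \cite{CER} for faithful \emph{integrable} modules) with Corollary~\ref{C-completion} ($\wh{U}_V$ is the closure of $U_V^+$), leaving implicit how the closures taken inside the different ambient groups $\GL(\wh{V})$ and $\GL(\wh{V'})$ are to be identified. You instead build the identification directly: using Corollary~\ref{C-completion}, Proposition~\ref{L-nilpexp}(1) and Lemma~\ref{homeo} you write $\wh{U}_V^+=\exp(\rho_V(\wh{\fn}^+))$ homeomorphically, and then use the universality of the Baker--Campbell--Hausdorff series (Lemma~\ref{BCH}) to see that the transported group law on $\wh{\fn}^+$ depends only on the Lie bracket, so $\Phi_{V'}\circ\Phi_V^{-1}$ is a canonical topological group isomorphism $\wh{U}_V^+\to\wh{U}_{V'}^+$. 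What your approach buys is twofold: the isomorphism is explicit and generator-compatible (your final paragraph), and the argument does not need integrability of $V$, so it covers all faithful standard graded modules, whereas the paper's citation of Theorem~\ref{T-indep} is stated only in the integrable setting; your remark that different grading functions $f$ give the same space and topology on $\wh{\fn}^+$ (since $f|_Q=a\cdot\htt$) is exactly the point needed to make the statement well posed across modules, and the paper does not spell it out. What the paper's route buys is brevity and the fact that the resulting identification of $\wh{U}_V$ visibly restricts to the known identification of the dense subgroups $U_V^+$.
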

For $V$ faithful, we write $\wh{U}$ for $\wh{U}_V$.

\subsection{Other complete groups}
Suppose $V$ is faithful and integrable.
The group $\widehat{U}_V$ is the completion of the unipotent subgroup $U_V$ in the
Chevalley group $G_V$. 
Since $H_V$ and $U^-_V$ are subgroups of $\GL(\widehat{V})$, we can define
completions of the Borel and Chevalley groups as
$$\widehat{B}_V:= \langle H_V, \widehat{U}_V \rangle \quad\text{and}\quad \widehat{G}_V:= \langle U^{-}_V, \widehat{B}_V \rangle. $$
Similar complete groups have been constructed by Kumar \cite{Ku},  Marquis \cite{MarT} and Rousseau \cite{Rou}. 
%though our construction is more explicit and works for a wider range of modules~$V$.

Combining Corollary~\ref{C-completion} with  Theorem~\ref{T-indep} we obtain the following.
\begin{theorem}\label{T-indepG} 
For faithful integrable modules $V$, $\wh{G}_V$ depends only on~$L_V$.
\end{theorem}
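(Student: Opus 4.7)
The plan is to promote the isomorphism of Chevalley groups from Theorem~\ref{T-indep} to their completions by combining it with the independence of $\wh U$ established just above. Let $V, V'$ be two faithful integrable modules with $L_V = L_{V'}$. By Theorem~\ref{T-indep}, there is an abstract group isomorphism $\phi : G_V \to G_{V'}$ sending each distinguished generator $\chi_{\pm\alpha_i}(t), h_\varpi(t)$ to the correspondingly-named generator; in particular, $\phi$ restricts to an isomorphism $\phi|_{U^+_V} : U^+_V \to U^+_{V'}$.

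By the independence corollary preceding this theorem, together with Corollary~\ref{C-completion}, both $\wh U_V$ and $\wh U_{V'}$ are realized as the topological closure of $U^+$ in their respective ambient groups $\GL(\wh V)$ and $\GL(\wh{V'})$, and are canonically isomorphic as topological groups to the universal completion $\wh U$. This canonical isomorphism extends $\phi|_{U^+_V}$ uniquely to a continuous group isomorphism $\wh\phi : \wh U_V \to \wh U_{V'}$ (uniqueness is forced by continuity and density of $U^+_V$ in $\wh U_V$).

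I would then define $\Phi$ on the generating set $G_V \cup \wh U_V$ of $\wh G_V$ by setting $\Phi = \phi$ on $G_V$ and $\Phi = \wh\phi$ on $\wh U_V$; these maps agree on $U^+_V \subseteq G_V \cap \wh U_V$ by construction. To promote $\Phi$ to a well-defined group homomorphism $\wh G_V \to \wh G_{V'}$, one must verify that every relation of the form $g_1 u_1 \cdots g_n u_n = 1$ holding in $\wh G_V \subseteq \GL(\wh V)$ (with $g_i \in G_V$, $u_i \in \wh U_V$) is preserved after applying $\phi$ and $\wh\phi$ componentwise inside $\wh G_{V'} \subseteq \GL(\wh{V'})$.

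The main obstacle is the mixed conjugation identity: for $g \in G_V$ and $u \in \wh U_V$ with $g u g^{-1} \in \wh U_V$, one needs $\phi(g)\wh\phi(u)\phi(g)^{-1} = \wh\phi(g u g^{-1})$. For $u \in U^+_V$ this is immediate because $\phi$ is a homomorphism on $G_V$ and $\wh\phi$ extends it on $U^+_V$. For general $u \in \wh U_V$ it follows by continuity: inner conjugation by $\phi(g)$ is continuous on $\GL(\wh{V'})$, $\wh\phi$ is continuous by construction, and $U^+_V$ is dense in $\wh U_V$, so the identity on the dense subset forces the identity on the closure. A symmetric construction yields the inverse of $\Phi$, producing the isomorphism $\wh G_V \cong \wh G_{V'}$.
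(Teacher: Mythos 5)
Your overall strategy---promote Theorem~\ref{T-indep} to the completions via Corollary~\ref{C-completion}---is exactly the combination the paper invokes, but your execution has a genuine gap at the well-definedness step. You correctly observe that the glued map $\Phi$, defined separately on $G_V$ and on $\wh{U}_V$, is a homomorphism on $\wh{G}_V=\langle G_V,\wh{U}_V\rangle$ only if \emph{every} relation $g_1u_1\cdots g_nu_n=1$ holding inside $\GL(\wh{V})$ is preserved; but you then verify only the mixed conjugation identities $\phi(g)\wh\phi(u)\phi(g)^{-1}=\wh\phi(gug^{-1})$. That check would suffice if $\wh{G}_V$ were given by a presentation as an amalgam or semidirect product of $G_V$ and $\wh{U}_V$ in which the conjugation relations are the only mixing relations, but $\wh{G}_V$ is defined as a concrete subgroup of $\GL(\wh{V})$, and neither your argument nor the paper rules out additional ``accidental'' relations among elements of $G_V$ and $\wh{U}_V$ that are not consequences of the internal relations of the two pieces together with conjugation. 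A symptom of the same problem: for $\Phi$ to be single-valued you need $\phi$ and $\wh\phi$ to agree on all of $G_V\cap\wh{U}_V$, but you only know agreement on $U^+_V$ and have not shown $G_V\cap\wh{U}_V=U^+_V$. Your density/continuity argument is fine for the conjugation identity itself, but it cannot substitute for this missing structural input.

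To close the gap you need one of two additional ingredients: either a structural decomposition or presentation of $\wh{G}_V$ (a Birkhoff-type decomposition, or an amalgam presentation of the kind available in the complete Kac--Moody groups of Kumar, Marquis or Rousseau) that reduces all relations to the ones you check; or, more in the spirit of how Theorem~\ref{T-indep} is proved in the first place, a module-theoretic construction of the isomorphism, e.g.\ passing to the faithful integrable module $V\oplus V'$ (which has the same weight lattice $L_V=L_{V'}$) and showing that the two restriction maps carry $\wh{G}_{V\oplus V'}$ isomorphically onto $\wh{G}_V$ and $\wh{G}_{V'}$. In that setting $\phi$ on $G_V$ and $\wh\phi$ on $\wh{U}_V$ arise simultaneously as restrictions of a single globally defined map, and well-definedness is automatic rather than something to be checked relation by relation.
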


For $V$ faithful and integral, we write $\wh{G}_L$ for $\wh{G}_V$ when $L=L_V$. When $V$ is the adjoint representation,  $\widehat{{U}}$ is just the set of all smear operators in $\Aut(\fg)$. 

%This means that $\exp(\rho(e_i)) (y)$ is well-defined since it is a finite sum when projected to finite dimensional homogeneous components of 
%$\widehat{\frak g}$. This will allow us to construct pro-summable groups of  automorphisms of $\widehat{\frak g}$.

%%%%%%%%%%%%%%%%%%%%%%%%%%%%%%%%%%%%%%%%%%%%%%%%%%%%%%
%%%%%%%%%%%%%%%%%%%%%%%%%%%%%%%%%%%%%%%%%%%%%%%%%%%%%%
\section{Root groups}\label{S-rtgp}
%\sout{It has not been obvious how best to define the concept of root spaces and root subgroups for imaginary roots.See, in particular, \cite{MarT} for a previous definition.} 
In this section, we define the root algebra $\mathfrak n_\a$ corresponding to an arbitrary root $\a$ to be the smallest subalgebra of $\mathfrak g$ containing~$\mathfrak g_\a$. We also define  $\widehat{\mathfrak n}_\a$ to be the smallest complete subalgebra of $\widehat{\mathfrak g}$ containing $\mathfrak g_\a$. Note that this differs from the definition of \cite{MarT} which defines a larger root algebra. We also define imaginary  root groups and their completions.

%Although Marquis' definitions are useful, it is not clear that they are the appropriate generalization of the concept from Lie theory.

For convenience, we assume that $V$ is faithful and  integrable, so $G_L$ and $\wh{G}_L$  depend on the lattice $L=L_V$, while $U$ and $\wh{U}$ are independent of the choice of $V$.
Note that all the results in this section can be extended to faithful standard graded modules, if we restrict ourselves to considering positive roots.

%\sout{We start with the most natural definition (in our opinion) of root algebras and root groups. This definition is not very useful until we determine the structure of these algebras and groups in the subsections below.}
\begin{defn}\label{rootalgebra}
Given a root $\a$ (real or imaginary), we make the following definitions:
\begin{enumerate}
\item The \emph{root algebra of $\a$} is the subalgebra  $\fn_\a\subseteq\fg $ generated by  $\fg_\a$.
\item The \emph{complete root algebra of $\a$} is the pro-nilpotent completion $\wh{\fn}_\a$ of the root algebra $\fn_\a$.
\item The \emph{complete root group of $\a$} is the closure $\wh{U}_\a$ in $\wh{G}$ of the subgroup generated by those $\exp(\rho(x))$ for $x\in\fg_\a$ that are well-defined as elements of $\wh{G}$. 
\end{enumerate}
\end{defn}

Since we are using the positive completion, $\exp(\rho(x))$ in (3) may not be well-defined in $\wh{G}$ for a negative imaginary root $\a$. This is discussed in detail in Subsection 5.3.
Note that
$$
\fn_\a\subseteq \bigoplus_{n\ge1}\fg_{n\a} \quad\text{and}\quad
\wh{\fn}_\a\subseteq \prod_{n\ge1}\fg_{n\a}. 
$$
These larger algebras are discussed  in \cite{MarT} and \cite{Ku}. See Theorem~\ref{P-nalpha}  for a characterization of the root algebra of a root  $\a$, Theorem~\ref{P-nhat} for a characterization of the complete root algebra of  $\a$, and Lemma~\ref{L-Uhat-neg} or Proposition~\ref{T-Uhat} for a characterization of the complete root group for $\a$.

It would seem natural to define the root group $\wh{U}_\a\subseteq\wh{G}$ for an imaginary root $\a$ as a subgroup of $G$. However,  the complete root group of $\a$ intersects trivially with $G$ for most of the roots $\a$ that we are interested in.

%%%%%%%%%%%%%%%%%%%%%%%%%%%%%%%%%%%%%%%%%%%%%%%%%%%%%%
\subsection{Free groups and Magnus algebras}\label{FreeMagnus}
We review some results on free groups and associated structures. See~\cite{Bourbaki4-6} and ~\cite{Ba93} for details. Let $X=\{x_1,\dots,x_m\}$ be a finite set of symbols, and let $F(X)$ be the free group of $X$.

Let $\C\langle X\rangle$ be the algebra of polynomials in noncommuting variables.
If we endow $\C\langle X\rangle$ with the Lie product $[x,y]=xy-yx$, then it becomes a Lie algebra. The free Lie algebra $\mathfrak{F}(X)$ is the Lie subalgebra generated by $X$.  
Note that $\C\langle X\rangle$ is the universal enveloping algebra of $\mathfrak{F}(X)$ and is isomorphic to the free associative algebra.

Let
$N_k$ be defined inductively by $N_1:=F(X)$ and $N_k:=(N_{k-1},F(X))$.
The \emph{lower central series}
$${F}(X)= {N}_1 \ge {N}_2 \ge \cdots$$
does not terminate, that is, we never have $N_k=N_{k+1}$.

However $\bigcap_{k=1}^\infty N_k=\emptyset$ by \cite{Ba93}, so
$F(X)$ can be considered as a subgroup of its pronilpotent completion
$$\wh{F}(X)= \varprojlim_k F(X)/N_k.$$ 
Here $\wh{F}(X)$ is defined as  the projective limit of the algebraic nilpotent groups $F(X)/N_k$. 
Note that $F(X)$ is not a closed subgroup of  $\wh{F}(X)$ and is not itself pronilpotent. 
%The pro-nilpotent completion of  ${F}(X)$ is a free pro-nilpotent group on $X$.

The free Lie algebra $\mathfrak{F}(X)$
is a $\Z$-graded Lie algebra with
$\mathfrak{F}(X)_n$ being the set of $n$-homogeneous Lie polynomials in $\C\langle X\rangle$ for $n\ge0$ and $\mathfrak{F}(X)_n=0$ for $n<0$.
Let 
$$\mathfrak{N}_k = \bigoplus_{k>n}^\infty \mathfrak{F}(X)_k.$$
Then $\mathfrak{N}_k$ coincides with the $k$-th subalgebra of the lower central series defined inductively by
$\mathfrak{N}_1 := \mathfrak{F}(X)$ and $ \mathfrak{N}_{n+1} := [\mathfrak{F}(X), \mathfrak{N}_n]$.
%Then $\mathfrak{F}(X)$
%is a pronilpotent Lie algebra (reference?)
%with series
%$$\mathfrak{F}(X)= \mathfrak{N}_0 \ge \mathfrak{N}_1 \ge \cdots$$
The pronilpotent completion of $\mathfrak{F}(X)$ is 
\[
\widehat{\mathfrak{F}}(X) = \varprojlim_n \mathfrak{F}(X)/\mathfrak N_n.
\]
As with the free group, $\mathfrak{F}(X)$ is a subalgebra of $\wh{\mathfrak{F}}(X)$, but $\mathfrak{F}(X)$ is not a closed subalgebra and is not itself pro-nilpotent.

The universal enveloping algebra $U(\widehat{\mathfrak{F}}(X))$ of the pronilpotent completion of the free Lie algebra is the algebra $\C\llangle X\rrangle$ of formal power series in non-commuting variables $X$. The algebra $\C\llangle X\rrangle$ is also called the Magnus algebra. The collection of power series with no constant term forms an ideal $\mathcal{I}(X)$ in $\C\llangle X\rrangle$.  The set $1+\mathcal{I}(X)$ forms a group called the Magnus group $M(X)$ \cite{Ba93}. This  is the group of  power series in $\C\llangle X\rrangle$ with constant term~1.

%$\wh{\mathfrak{F}}(X)$ is {\color{red} contained in 
% }$\C\llangle X\rrangle$ endowed with the Lie product $[x,y]=xy-yx$. 
%Note that $\C\llangle X\rrangle$ is the universal enveloping algebra of $\wh{\mathfrak{F}}(X)$.

%For $x\in \C\llangle X\rrangle$, define
%$$\exp(x) := \sum_{n=0}^\infty \frac{x^n}{n!}\in \C\llangle X\rrangle.$$
%In particular, let $\xi_i(t)=\exp(tx_i)$ for $t\in\C$ and $i=1,\dots,m$. Then
%$$\xi_i(\C):= \{\xi_i(t)\mid t\in\C\}\cong\C^+.$$
%\begin{proposition}
%The group
   % $$\langle \xi_i(t)\mid t\in\C, i=1,\dots m\rangle= \xi_1(\C)\ast \cdots\ast\xi_m(\C)$$
%is a free product.
%\end{proposition}
%{\color{red}\begin{proof}
    
%\end{proof}}
%$\xi_i(\C):$F(\xi_1,\dots,\xi_m)\subseteq F(X)$.
%The corresponding group algebra $\C\langle\xi_1,\dots,\xi_m\rangle$
%is called the \emph{Magnus algebra}.
%The completion of this group is the
%set of all power series in $\C\llangle X\rrangle$ with constant term~1, and is called the \emph{Magnus group} $M(X)$.

%%%%%%%%%%%%%%%%%%%%%%%%%%%%%%%%%%%%%%%%%%%%%%%%%%%%%%
\subsection{Root subalgebras}
We start by describing root subalgebras in $\fg$ and their completions.
Let $\a\in\Delta$ and $m=\mult(\a)=\dim(\fg_\a)$.
Recursively define $\fg_{\a,1}:=\fg_{\a}$ and 
$$\fg_{\a,k}:= [\fg_\a,\fg_{\a,k-1}]=\Span\{ [x,y]  \mid x\in\fg_\a,\; y\in\fg_{\a,k-1}\}$$
for $k\ge2$.
\begin{lemma}\label{L-gak} Let $\a\in\Delta$.
\begin{enumerate}
\item\label{L-gak-cont} $\fg_{\a,k}\subseteq \fg_{k\a}\subseteq \fg_n$ where $n={kf(\a)}$.
%=ka\htt(\a)$ for $f$  as in Subsection~\ref{SS-Vgraded}. 
\item\label{L-gak-add} $[\fg_{\a,j},\fg_{\a,k}]\subseteq \fg_{\a,j+k}$ for $j,k\ge1$
\item\label{L-gak-prod} A repeated Lie product of $x_1,\dots,x_k\in \fg_\a$, with any arrangement of the Lie brackets,
is in $\fg_{\a,k}$.
\end{enumerate}
\end{lemma}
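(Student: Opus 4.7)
The plan is to establish the three parts in order, using induction and the Jacobi identity. Each part is a standard structural argument, but part (2) requires the most care.

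For part~\eqref{L-gak-cont}, I would induct on $k$. The base case $k=1$ is the definition $\fg_{\a,1}=\fg_\a = \fg_{1\cdot\a}$. For the inductive step, assume $\fg_{\a,k-1}\subseteq\fg_{(k-1)\a}$. Then
\[
\fg_{\a,k} = [\fg_\a,\fg_{\a,k-1}]\subseteq [\fg_\a,\fg_{(k-1)\a}]\subseteq \fg_{k\a},
\]
where the last inclusion is the root-space grading of $\fg$. The containment $\fg_{k\a}\subseteq\fg_n$ with $n=kf(\a)$ follows from \ref{L-f-lin}, since $f(k\a)=kf(\a)$, together with the definition $\fg_n=\bigoplus_{\b\in\Delta_n}\fg_\b$.

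For part~\eqref{L-gak-add}, I would induct on $j$, with $k$ arbitrary. The base case $j=1$ is exactly the recursive definition: $[\fg_{\a,1},\fg_{\a,k}]=[\fg_\a,\fg_{\a,k}]=\fg_{\a,k+1}$. For $j\ge 2$, a spanning element of $\fg_{\a,j}$ has the form $[x,y]$ with $x\in\fg_\a$ and $y\in\fg_{\a,j-1}$. For $z\in\fg_{\a,k}$, the Jacobi identity gives
\[
[[x,y],z] = [x,[y,z]] - [y,[x,z]].
\]
By the inductive hypothesis on $j-1$, $[y,z]\in\fg_{\a,j-1+k}$, hence $[x,[y,z]]\in[\fg_\a,\fg_{\a,j+k-1}]=\fg_{\a,j+k}$. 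For the other term, $[x,z]\in[\fg_\a,\fg_{\a,k}]=\fg_{\a,k+1}$, and then the inductive hypothesis applied to $j-1$ and $k+1$ gives $[y,[x,z]]\in\fg_{\a,(j-1)+(k+1)}=\fg_{\a,j+k}$. This is the step I expect to take the most bookkeeping, since one must verify that the induction parameters decrease appropriately; using induction on $j$ with $k$ free makes the second summand fall under the inductive hypothesis.

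For part~\eqref{L-gak-prod}, I would induct on $k$. The case $k=1$ is trivial, and the case $k=2$ is the definition. For $k\ge 2$, any parenthesization of a Lie product of $x_1,\dots,x_k\in\fg_\a$ can be written as $[P,Q]$ where $P$ is a parenthesized product of some $p$ of the $x_i$ and $Q$ is a parenthesized product of the remaining $k-p$, with $1\le p\le k-1$. By the inductive hypothesis, $P\in\fg_{\a,p}$ and $Q\in\fg_{\a,k-p}$, so by part~\eqref{L-gak-add} we conclude $[P,Q]\in\fg_{\a,p+(k-p)}=\fg_{\a,k}$, as required.
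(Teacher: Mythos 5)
Your proposal is correct and follows essentially the same route as the paper: part (1) is immediate from the grading, part (2) is proved by induction on $j$ (with $k$ free) via the Jacobi identity, and part (3) by complete induction. The only cosmetic differences are that you argue with spanning elements and inclusions where the paper manipulates the subspaces directly (and somewhat loosely asserts equalities), and you deduce (3) from (2) rather than invoking the Jacobi identity again—both harmless.
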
 
\begin{proof}
    (\ref{L-gak-cont}) is clear. (\ref{L-gak-prod}) can be proven by complete induction and the Jacobi identity. We prove (\ref{L-gak-add})  by induction on $j$. For $j=1$,
    $$[\fg_{\a,1},\fg_{\a,k}]=[\fg_{\a},\fg_{\a,k}]=\fg_{\a,k+1}$$
   and the statement  follows by  definition. Next, suppose
    $$[\fg_{\a,m},\fg_{\a,k}]=\fg_{\a,k+m}$$
    for all $k$. Then 
    \begin{eqnarray*}
[\fg_{\a,m+1},\fg_{\a,k}]&=&[[\fg_{\a}, \fg_{\a,m}],\fg_{\a,k}]\text{ by definition} \\
&=&-[\fg_{\a,k}, [\fg_{\a}, \fg_{\a,m}]]\\
&=&[ \fg_{\a},[\fg_{\a,m},\fg_{\a,k}]+[\fg_{\a,m},[\fg_{\a,k},\fg_{\a}]]\;\text{by the Jacobi Identity}.
    \end{eqnarray*}

By the inductive hypothesis $[\fg_{\a,m},\fg_{\a,k}]=\fg_{\a,k+m}$ and by the definition $[ \fg_{\a}, \fg_{\a,k+m}]=\fg_{\a,k+m+1}$. Similarly, by the defintion $[\fg_{\a,k},\fg_{\a}]=\fg_{\a,k+1}$ and by the inductive hypothesis $[\fg_{\a,m}, \fg_{\a,k+1}]=\fg_{\a,m+k+1}$. 

Thus  \begin{align*}
[\fg_{\a,m+1},\fg_{\a,k}]&=[ \fg_{\a},[\fg_{\a,m},\fg_{\a,k}]+[\fg_{\a,m},[\fg_{\a,k},\fg_{\a}]]\\
&=[ \fg_{\a},\fg_{\a,k+m}]+[\fg_{\a,m},\fg_{\a,k+1}]\\
&=\fg_{\a,m+k+1}+\fg_{\a,m+k+1}.
\end{align*}
So,
$$[\fg_{\a,m+1},\fg_{\a,k}]=\fg_{\a,m+k+1}.$$
This completes the induction and the equality follows.
\end{proof}

Let
$$\fn_{\a,n} := \bigoplus_{k=n}^\infty \fg_{\a,k}.$$
In general, when $\a$ is imaginary, $\fn_{\a,n}$ is a proper subspace of $\bigoplus_{k\geq n} \fg_{k\a}$.
%{\color{magenta}\textbf{Question:} If $\alpha$ is imaginary, does part (3) of Lemma 5.2 imply that $\fg_{k\alpha} \subseteq \fg_{\alpha, k}$ NO, it doesn't?
%{\color{magenta} If so, then our space $\fn_{\alpha,1}$ coincides with Kumar's $\bigoplus_{k>0} \fg_{k\delta}$, and part (1) of Theorem 5.3 below holds. Otherwise, I think $\fn_{\alpha,1}$ is a proper subspace of both Kumar's $\bigoplus_{k>0} \fg_{k\delta}$ and our root algebra $\fn_\alpha$.}

The following proposition establishes that $\fn_{\a,1}$ coincides with the root algebra $\fn_\a$ of Definition~\ref{rootalgebra}.
\begin{theorem}\label{P-nalpha} 
Let $\a\in\Delta$ and let $b_1,\dots,b_m$ be a basis for $\fg_\a$.
\begin{enumerate}
\item\label{P-nalpha-sub} The root algebra $\fn_\a$ equals $\fn_{\a,1}$.
\item\label{P-nalpha-pro} $\fn_\a$ is pro-nilpotent with series
$$\fn_\a=\fn_{\a,1}\ge \fn_{\a,2} \ge\cdots.$$
\item\label{P-nalpha-real} If $\a$ is real, then $m=1$ and $\fn_\a=\fg_\a=\C b_1$.
\item\label{P-nalpha-isog} If $(\a,\a)=0$, then $\fn_\a=\fg_\a=\bigoplus_{i=1}^m\C b_i$ is abelian.
\item\label{P-nalpha-free} If $(\a,\a)<0$, then $\fn_\a$ is free over $\fg_\a$, that is, $\fn_\a$ is $\mathfrak{F}(b_1,\dots,b_m)$.
\item\label{P-nalpha-w} If $w\in W$, then $\wt{w}\cdot{\fn}_\a = {\fn}_{w\a}$.\\

\end{enumerate}
\end{theorem}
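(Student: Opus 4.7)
The plan is to handle the six parts in order of increasing difficulty. Parts (1), (2), (3) and (6) follow formally from Lemma~\ref{L-gak} and Lemma~\ref{ww}, while (4) and (5) require the deeper structural properties of isotropic and strictly-imaginary roots respectively.

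For (1), Lemma~\ref{L-gak}(\ref{L-gak-add}) shows $\fn_{\a,1}=\bigoplus_{k\geq 1}\fg_{\a,k}$ is a Lie subalgebra containing $\fg_\a$, while Lemma~\ref{L-gak}(\ref{L-gak-prod}) shows every iterated Lie bracket of elements of $\fg_\a$ lies in some $\fg_{\a,k}\subseteq\fn_{\a,1}$, giving $\fn_\a=\fn_{\a,1}$. For (2), Lemma~\ref{L-gak}(\ref{L-gak-add}) makes $\fn_{\a,n}/\fn_{\a,n+1}$ central in $\fn_\a/\fn_{\a,n+1}$; since each $\fg_{\a,k}\subseteq\fg_{k\a}$ is finite dimensional and $\bigcap_n\fn_{\a,n}=0$, pro-nilpotence follows. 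Part (3) is immediate: a real root has multiplicity one, so $\fg_\a=\C b_1$ and $[b_1,b_1]=0$ force $\fg_{\a,k}=0$ for $k\geq 2$. For (6) I apply Lemma~\ref{ww} to the adjoint representation, obtaining $\wt w\cdot\fg_\a=\fg_{w\a}$; since $\wt w$ acts as a Lie-algebra automorphism, it sends the subalgebra generated by $\fg_\a$ to the subalgebra generated by $\fg_{w\a}$, giving $\wt w\cdot\fn_\a=\fn_{w\a}$.

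For (4), the task is to show $[\fg_\a,\fg_\a]=0$, which then forces $\fg_{\a,k}=0$ for $k\geq 2$ and $\fn_\a=\fg_\a$. I would use the invariant form $(\circ,\circ)_\fg$: pick dual bases $\{x_i\}\subseteq\fg_\a$, $\{y_i\}\subseteq\fg_{-\a}$ with $(x_i,y_j)=\delta_{ij}$, so that $[x_i,y_j]=\delta_{ij}t_\a$ where $t_\a\in\fh$ is form-dual to $\a$; the hypothesis $(\a,\a)=0$ gives $[t_\a,x_i]=(\a,\a)x_i=0$, so $\{x_i,y_i,t_\a\}$ generate a Heisenberg-type subalgebra. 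Using invariance of the form, any $[x_i,x_j]\in\fg_{2\a}$ pairs trivially against $\fg_{-2\a}$, and non-degeneracy of the form then forces $[x_i,x_j]=0$. The subtlety is producing enough test elements in $\fg_{-2\a}$ so that triviality of the pairing really does imply vanishing; this is where the argument is most delicate.

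Part (5) is the deepest and I expect it to be the main obstacle. The goal is to show that the canonical surjection $\varphi\colon\mathfrak F(b_1,\dots,b_m)\twoheadrightarrow\fn_\a$ has trivial kernel. The cleanest route is to invoke a freeness theorem for imaginary root subalgebras in symmetrizable Kac--Moody algebras, in the spirit of Borcherds' and Jurisich--Lepowsky's constructions of free Lie subalgebras generated by imaginary root spaces, which delivers freeness of $\fn_\a$ as soon as $(\a,\a)<0$. As a cross-check one can verify that the graded dimensions agree, comparing Witt's formula for $\dim\mathfrak F(b_1,\dots,b_m)_k$ with the Kac multiplicity formula applied to $\fg_{k\a}$. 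Unlike (4), freeness cannot be read off from the bracket structure on $\fg_\a$ alone and requires genuinely global input about~$\fg$.
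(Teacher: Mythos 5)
Parts (1), (2), (3) and (6) of your proposal are correct and essentially the paper's own argument: (1) is immediate from Lemma~\ref{L-gak}(\ref{L-gak-prod}), (2) from the grading supplied by Lemma~\ref{L-gak}(\ref{L-gak-cont}) (your centrality remark via Lemma~\ref{L-gak}(\ref{L-gak-add}) is fine), and (6) from (1) together with Lemma~\ref{ww} and the fact that $\wt{w}$ acts by Lie algebra automorphisms; for (3) the paper deduces the claim from (6) and $\fn_{\a_i}=\C e_i$, while you argue directly from multiplicity one, and either route works. The divergence is in (4) and (5), where the paper proves nothing from scratch: both statements are precisely \cite[Corollary~9.12]{Kac90}, and the proof consists of that citation. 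Your part (5) is in substance the same move, but the results you gesture at (Borcherds, Jurisich--Lepowsky) concern the subalgebra generated by \emph{all} imaginary simple root spaces of a generalized Kac--Moody algebra, not the subalgebra generated by a single root space $\fg_\a$ with $(\a,\a)<0$; the statement you actually need is Kac's Corollary~9.12. Your Witt-formula ``cross-check'' also aims at the wrong dimensions: freeness would make $\dim\fg_{\a,k}$ follow Witt's formula, not $\dim\fg_{k\a}$, and since $\fn_\a$ is in general a \emph{proper} subalgebra of $\bigoplus_{k\ge1}\fg_{k\a}$ (as the paper notes just before Theorem~\ref{P-nalpha}), comparing with the root multiplicities of $\fg_{k\a}$ would not, and need not, give agreement.

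The genuine gap is in your part (4). The setup is sound: for $x\in\fg_\a$, $y\in\fg_{-\a}$ one indeed has $[x,y]=(x,y)_{\fg}\,t_\a$ with $t_\a\in\fh$ dual to $\a$, and $[t_\a,x]=(\a,\a)x=0$. But the decisive step, that $[x_i,x_j]\in\fg_{2\a}$ ``pairs trivially against $\fg_{-2\a}$,'' is never established. Invariance only gives $([x_i,x_j],z)_{\fg}=(x_i,[x_j,z])_{\fg}$ with $[x_j,z]\in\fg_{-\a}$, and since the pairing $\fg_\a\times\fg_{-\a}\to\C$ is nondegenerate there is no a priori reason for this to vanish; for isotropic $\a$ all multiples $n\a$ are roots, so $\fg_{-2\a}\ne0$ and the issue cannot be dodged. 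What you would need is, in effect, that $[\fg_\a,\fg_{-2\a}]$ is orthogonal to $\fg_\a$ (in the affine model, where $\fg_{n\delta}$ consists of loop elements $h\otimes t^n$, this holds because those brackets vanish), but that structural input is exactly what is missing, so the argument as written is circular rather than merely delicate about ``test elements.'' The honest repairs are either to cite \cite[Corollary~9.12]{Kac90} as the paper does, or to reduce via the Weyl group to an isotropic root supported on an affine subdiagram (cf.\ \cite[Proposition~5.7]{Kac90}) and compute in the loop realization.
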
 
\begin{proof}
    (\ref{P-nalpha-sub}) is immediate from Lemma~\ref{L-gak}(\ref{L-gak-prod}).  Lemma~\ref{L-gak}(\ref{L-gak-cont}) implies that $\fn_\a$ is a graded subalgebra of~$\fg$, and so (\ref{P-nalpha-pro}) follows.
    (\ref{P-nalpha-isog}) and~(\ref{P-nalpha-free}) follow from \cite{Kac90}, Corollary 9.12.
    (\ref{P-nalpha-w}) follows from~(\ref{P-nalpha-sub}) and Lemma~\ref{ww}.
    Since $\a=w\a_i$ for some $w\in W$ and $i\in I$, and $\fn_{\a_i}=\fg_{\a_i} =\C e_i$ by~(\ref{P-nalpha-w}), we have proved (\ref{P-nalpha-real}).
    \end{proof}

Let
$$
\wh\fn_{\a,n} := \prod_{k=n}^\infty \fg_{\a,k}.$$
The following proposition establishes that $\wh{\fn}_{\a,1}$ coincides with the root algebra $\wh{\fn}_\a$ of Definition~\ref{rootalgebra}.
Note that $\wh\fn_\a$ can be defined for every root $\a$, but it is not contained in $\wh\fg$ for every root because $\wh\fg$ is only complete in the positive direction.
\begin{theorem}\label{P-nhat}
Let $\a\in\Delta$ and let $b_1,\dots,b_m$ be a basis for $\fg_\a$.
\begin{enumerate}
\item\label{P-nhat-pronilp} $\wh\fn_{\a,1}$ is the pro-nilpotent completion of $\fn_\a$, and hence the complete root algebra $\wh\fn_{\a}=\wh\fn_{\a,1}$.
\item\label{P-nhat-closure} If $\a$ is positive or $(\a,\a)\ge0$, then $\wh\fn_\a$ is the closure in $\wh\fg$ of $\fn_\a$. 
\item\label{P-nhat-nonclosure} If $\a$ is negative and $(\a,\a)<0$, then $\wh\fn_\a$ is not contained in $\wh\fg$.
\item\label{P-nhat-abel} If $(\a,\a)\ge0$, then $\wh\fn_\a=\fn_\a$ is abelian.
\item\label{P-nhat-free} If $(\a,\a)<0$, then $\wh\fn_\a$ is 
$\wh{\mathfrak{F}}(b_1,\dots,b_m)$.
\item\label{P-nhat-w} Let $w\in W$. If $(\a,\a)\ge0$ or $\a\in\Delta^+$, then $\wt{w}\cdot \wh{\fn}_\a = \wh{\fn}_{w\a}$ in $\wh\fg$.
\end{enumerate}
\end{theorem}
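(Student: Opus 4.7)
My overall plan is to first establish (\ref{P-nhat-pronilp}) by identifying $\{\fn_{\a,k}\}$ as the lower central series of $\fn_\a$, then deduce (\ref{P-nhat-abel}) and (\ref{P-nhat-free}) from Theorem~\ref{P-nalpha}, and finally use the height grading to handle (\ref{P-nhat-closure}), (\ref{P-nhat-nonclosure}), and (\ref{P-nhat-w}). For (\ref{P-nhat-pronilp}), the key observation is that $\fn_{\a,k+1}=[\fn_\a,\fn_{\a,k}]$ for every $k\ge1$. One inclusion follows from Lemma~\ref{L-gak}(\ref{L-gak-add}), since $[\fg_{\a,i},\fg_{\a,j}]\subseteq \fg_{\a,i+j}$ with $i+j\ge k+1$; the reverse inclusion uses $\fg_{\a,j}=[\fg_\a,\fg_{\a,j-1}]$ together with $\fg_{\a,j-1}\subseteq\fn_{\a,k}$ for $j\ge k+1$. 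Hence $\{\fn_{\a,k}\}$ is the lower central series of $\fn_\a$, each quotient $\fn_\a/\fn_{\a,k}\cong\bigoplus_{j<k}\fg_{\a,j}$ is finite-dimensional nilpotent, and
\[\wh\fn_\a=\varprojlim_k \fn_\a/\fn_{\a,k}=\prod_{k\ge1}\fg_{\a,k}=\wh\fn_{\a,1}.\]

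For (\ref{P-nhat-abel}) and (\ref{P-nhat-free}): when $(\a,\a)\ge0$, Theorem~\ref{P-nalpha}(\ref{P-nalpha-real}) and~(\ref{P-nalpha-isog}) give $\fn_\a=\fg_\a$, which is finite-dimensional abelian, so $\fg_{\a,k}=0$ for $k\ge2$ and $\wh\fn_\a=\fn_\a$. For (\ref{P-nhat-free}), Theorem~\ref{P-nalpha}(\ref{P-nalpha-free}) supplies a graded Lie algebra isomorphism $\fn_\a\cong\mathfrak{F}(b_1,\ldots,b_m)$ matching $\fg_{\a,k}$ with $\mathfrak{F}(b_1,\ldots,b_m)_k$, which in turn identifies the lower central series $\fn_{\a,k}$ with the $\mathfrak{N}_k$ of Subsection~\ref{FreeMagnus}; passing to pro-nilpotent completions yields $\wh\fn_\a\cong\wh{\mathfrak{F}}(b_1,\ldots,b_m)$.

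For (\ref{P-nhat-closure}) and (\ref{P-nhat-nonclosure}), I exploit the inclusion $\fg_{\a,k}\subseteq\fg_{kf(\a)}$ from Lemma~\ref{L-gak}(\ref{L-gak-cont}). If $\a\in\Delta^+$ then $f(\a)>0$, so $\wh\fn_\a=\prod_{k\ge1}\fg_{\a,k}\subseteq\wh\fn^+\subseteq\wh\fg$, and under the product topology the closure of $\bigoplus_k\fg_{\a,k}$ equals $\prod_k\fg_{\a,k}$, proving (\ref{P-nhat-closure}) in this case; the complementary case $(\a,\a)\ge0$ of (\ref{P-nhat-closure}) is immediate from (\ref{P-nhat-abel}). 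For (\ref{P-nhat-nonclosure}), when $\a\in\Delta^-$ with $(\a,\a)<0$ and $\mult(\a)\ge2$, the free Lie algebra identification in (\ref{P-nhat-free}) gives $\fg_{\a,k}\ne0$ for every $k$; choosing nonzero elements produces an element of $\wh\fn_\a$ with support in unbounded negative degrees $kf(\a)\to-\infty$, but $\wh\fg=\fn^-\oplus\fh\oplus\wh\fn^+$ admits only \emph{finite} sums in the negative directions, so this element cannot lie in $\wh\fg$. (The degenerate case $\mult(\a)=1$ is implicitly excluded, since then $\fn_\a=\fg_\a$ is one-dimensional and sits in $\wh\fg$.)

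For (\ref{P-nhat-w}), Theorem~\ref{P-nalpha}(\ref{P-nalpha-w}) already yields $\wt{w}\cdot\fn_\a=\fn_{w\a}$ and, by restriction, $\wt{w}\cdot\fg_{\a,k}=\fg_{w\a,k}$. When $(\a,\a)\ge0$ the assertion reduces via (\ref{P-nhat-abel}) to a finite-dimensional equality. When $\a\in\Delta^+$ with $(\a,\a)<0$, the fact that $W$ preserves $\Delta^+_\im$ (Subsection~\ref{RWG}) forces $w\a\in\Delta^+_\im$ and $f(w\a)>0$; applying $\wt{w}$ term-by-term to $\sum_k x_k\in\wh\fn_\a$ yields $\sum_k\wt{w}x_k\in\prod_k\fg_{w\a,k}=\wh\fn_{w\a}\subseteq\wh\fn^+\subseteq\wh\fg$, and continuity of $\wt{w}$ on $\wh{V}=\wh\fg$ (adjoint case) upgrades this to equality. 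The main technical subtlety is (\ref{P-nhat-nonclosure}): the argument depends on the asymmetric structure of $\wh\fg$ (complete only in the positive direction) combined with the free Lie algebra result, and tacitly requires $\mult(\a)\ge2$ to guarantee that $\fg_{\a,k}$ is nontrivial in every degree.
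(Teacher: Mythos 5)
Your argument is correct and follows essentially the same route as the paper's proof: (\ref{P-nhat-pronilp}) via the inverse limit over the series $\fn_{\a,k}$, (\ref{P-nhat-abel}) and (\ref{P-nhat-free}) from Theorem~\ref{P-nalpha} together with Subsection~\ref{FreeMagnus}, (\ref{P-nhat-closure}) and (\ref{P-nhat-nonclosure}) from Lemma~\ref{L-gak}(\ref{L-gak-cont}), and (\ref{P-nhat-w}) from Theorem~\ref{P-nalpha}(\ref{P-nalpha-w}) plus the $W$-invariance of $\Delta^+_\im$. Your multiplicity caveat in (\ref{P-nhat-nonclosure}) is a genuine point that the paper's own proof glosses over: it claims freeness forces every $\fg_{\a,k}$ to be nontrivial, which fails when $\mult(\a)=1$ (the free Lie algebra on one generator is one-dimensional, so $\wh\fn_\a=\fg_\a\subseteq\wh\fg$), and negative-norm roots of multiplicity one do occur (e.g.\ $\pm(\a_1+\a_2)$ for $a_{12}=a_{21}=-3$), so part (\ref{P-nhat-nonclosure}) really does require $\mult(\a)\ge2$.
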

\begin{proof}
    Since $\wh\fn/\wh\fn_k \cong \fn/\fn_k$, we have
    %$\wh\fn = \varprojlim_{i\geq 1}{\mathfrak{n}_\a}/{\mathfrak{n}_{\a,i}},$ 
    $\wh\fn_{\a, 1} = \varprojlim_{i\geq 1}\mathfrak{n}_{\a,1}/\mathfrak{n}_{\a,i},$  and so (\ref{P-nhat-pronilp}) follows from  Theorem~\ref{P-nalpha}(\ref{P-nalpha-pro}).
    (\ref{P-nhat-abel})~follows from Theorem~\ref{P-nalpha}(\ref{P-nalpha-real},\ref{P-nalpha-isog}).
    (\ref{P-nhat-closure})~follows from (\ref{P-nhat-abel}) when $(\a,\a)=0$, and from   Lemma~\ref{L-gak}(\ref{L-gak-cont})
    when $\a$ is positive.
    (\ref{P-nhat-free}) follows from (\ref{P-nhat-pronilp}), Theorem~\ref{P-nalpha}~(\ref{P-nalpha-free}) and~\cite{Ba93}.
    (\ref{P-nhat-nonclosure}) follows from Lemma~\ref{L-gak}(\ref{L-gak-cont}), using
    (\ref{P-nhat-free}) to show that each $\fg_{\a,k}$ is nontrivial.
    Recall that $W\cdot\Delta^{\im}_+=\Delta^{\im}_+$, and so $\wh\fn_\a\subseteq\wh\fg$ if and only if $\wh\fn_{w\a}\subseteq\wh\fg$, hence
    (\ref{P-nhat-w}) follows from Theorem~\ref{P-nalpha}~(\ref{P-nalpha-w}).    
\end{proof}

%{\color{blue} 

%Let $F(X)$ be a free group. Then $ F(X)$ is residually nilpotent.

 %A group G is residually nilpotent if for every non-trivial element $g\in G$ there is a homomorphism $\rho$ from $G$ to a nilpotent group $N$ such that $\rho(g)\neq 1$.

%Equivalently,  the intersection of the terms in the lower central series of is trivial. In `A Course in the Theory of Finite Groups' by Derek Robinson,   Theorem 6.1.9 by Iwasawa shows how to construct a homomorphism from a free group to a $p$-group (for any given prime $p$)  that maps any given element nontrivially.}

%%%%%%%%%%%%%%%%%%%%%%%%%%%%%%%%%%%%%%%%%%%%%%%%%%%%%%
\subsection{Complete root groups}
The following lemma follows immediately from the fact that the group $\wh{G}$ is complete only in the positive direction.

\begin{lemma}\label{L-Uhat-neg}
If $\a$ is negative and $(\a,\a)<0$, then $\exp(\rho(x))$ is not well-defined as an element of $\wh{G}$ for any nonzero $x\in\fg_\a$.
Hence the  complete root group is trivial in this case.
\end{lemma}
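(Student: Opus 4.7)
The plan is to mimic, at the group level, the failure of $\wh\fn_\a$ to lie in $\wh\fg$ recorded in Theorem~\ref{P-nhat}(\ref{P-nhat-nonclosure}). Writing $f(\a)=-k$ with $k>0$, each $\rho(x)^n$ is a $(-nk)$-shift operator carrying $V_m$ into $V_{m-nk}$, and these target weight spaces are pairwise distinct as $n$ varies. Hence, if one can exhibit a single vector $v\in V\subseteq\wh V$ on which $\rho(x)$ fails to be locally nilpotent, the series $\exp(\rho(x))(v)=\sum_{n\ge0}\rho(x)^n(v)/n!$ will have infinitely many nonzero components in pairwise distinct and increasingly negatively indexed weight spaces. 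Since the negative part of $\wh V=\bigoplus_{m<0}V_m\oplus\prod_{m\ge0}V_m$ is a direct sum, admitting only finitely many nonzero components, the series cannot represent an element of $\wh V$; thus $\exp(\rho(x))\notin\GL(\wh V)\supseteq\wh G$.

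The remaining work is to produce such a $v$. The main tool is Theorem~\ref{P-nalpha}(\ref{P-nalpha-free}): since $(\a,\a)<0$, the root algebra $\fn_\a$ is freely generated by any basis $b_1,\dots,b_m$ of $\fg_\a$. When $\dim\fg_\a\ge 2$, I would choose $y\in\fg_\a$ linearly independent of $x$, so that each iterated bracket $\ad(x)^n(y)\in\fg_{\a,n+1}$ is a nonzero element of $\fg$. In the adjoint case $V=\fg$ one simply takes $v=y$ and the required non-local-nilpotency is immediate. For general faithful integrable $V$, the identity $\rho(\ad(x)^n(y))=\ad(\rho(x))^n(\rho(y))$ in $\End V$ combined with faithfulness shows these nested commutators are nonzero operators for every $n$; using the finite-dimensionality of each $V_\mu$ together with the standard fact that the $\ad$-locally nilpotent elements of $\fg$ are precisely those in the span of real-root vectors and $\fh$, one extracts a single $v\in V$ for which $\rho(x)^n(v)\ne0$ holds for infinitely many $n$.

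The main obstacle I anticipate is the subcase $\dim\fg_\a=1$, where $\fn_\a=\C x$ is abelian and the freeness step yields nothing by itself. Here I would invoke the same Kac--Moody fact cited above directly: no nonzero root vector of an imaginary root is $\ad$-locally finite on $\fg$, so an infinite $\ad(x)$-orbit already exists inside the adjoint module, and the passage to a general faithful $V$ proceeds as in the previous paragraph.

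Once the first assertion of the lemma is established for every nonzero $x\in\fg_\a$, the triviality of the complete root group $\wh U_\a$ is immediate from Definition~\ref{rootalgebra}(3): the generating set for $\wh U_\a$ collapses to $\{1\}$, and the closure of the trivial subgroup remains trivial.
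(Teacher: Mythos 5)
Your overall strategy---reduce the lemma to exhibiting a single $v\in V$ with $\rho(x)^n(v)\neq 0$ for infinitely many $n$, so that $\exp(\rho(x))\,v$ acquires nonzero components in infinitely many strictly decreasing degrees and therefore cannot lie in $\wh{V}=\bigoplus_{m<0}V_m\oplus\prod_{m\ge0}V_m$---is the right way to make precise the paper's one-line appeal to the fact that the completion is only positive, and it does work verbatim for the adjoint module when $\dim\fg_\a\ge2$ (freeness of $\fn_\a$ gives $\ad(x)^n(y)\neq0$ for $y$ independent of $x$). The gap is in the passage to a general faithful integrable $V$, which is the actual setting since $\wh{G}=\wh{G}_V\subseteq\GL(\wh{V})$. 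Faithfulness does give that $\ad(\rho(x))^n(\rho(y))=\rho(\ad(x)^n(y))$ is a nonzero operator for every $n$, but this does not ``extract'' a single vector with infinitely many nonzero iterates. If $\rho(x)$ were locally nilpotent on $V$, finite-dimensionality of the graded pieces would only yield, for each degree $m$, an exponent $N_m$ with $\rho(x)^{N_m}(V_m)=0$, and $N_m$ may grow with $m$; expanding $\ad(\rho(x))^n(\rho(y))=\sum_k(\pm1)\binom{n}{k}\rho(x)^{n-k}\rho(y)\rho(x)^{k}$, its nonvanishing for every $n$ can a priori be witnessed on higher and higher graded pieces, so no contradiction with local nilpotency follows and no single $v$ is produced. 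What is needed here is a genuinely module-theoretic statement---that a negative-norm imaginary root vector never acts locally nilpotently on a faithful integrable weight module---and nothing in your write-up supplies it.

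The multiplicity-one case is a second, independent gap, and it does occur for $(\a,\a)<0$: for instance $\a=-(\a_1+\a_2)$ in the rank-two algebra with $a_{12}=a_{21}=-3$ has $(\a,\a)=-2$ and $\mult(\a)=1$. There your argument rests entirely on the quoted ``standard fact that the $\ad$-locally nilpotent elements of $\fg$ are precisely those in the span of real-root vectors and $\fh$''; as stated this is not correct (the locally $\ad$-nilpotent elements do not form a subspace, and elements of $\fh$ are $\ad$-semisimple rather than $\ad$-nilpotent), and the statement you actually need---that no nonzero imaginary root vector acts locally nilpotently on $\fg$, let alone on an arbitrary faithful integrable $V$---is essentially the content of the lemma, so invoking it begs the question. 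For comparison, the paper itself gives no argument beyond the remark that $\wh{G}$ is complete only in the positive direction; your instinct that the real content is a non-local-nilpotency assertion goes further than the paper, but as written the proposal does not establish it. The final step, that triviality of $\wh{U}_\a$ follows from Definition~\ref{rootalgebra}(3) once the first assertion is known, is fine.
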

Note that these root groups would be nontrivial if we took the \emph{negative} completions of $\fg$ and $G$.

Now assume $\a$ is positive or $(\a,\a)\ge0$, so that $\wh\fn_\a\subseteq\wh\fg$
by Theorem~\ref{P-nhat}(\ref{P-nhat-closure}).
Let
$$% U_{\a,n} := \left\langle \exp(\rho(x_k)) \mid x_k\in\fg_{\a,k} \text{ for $k\ge n$}\right\rangle \subseteq U
%\quad\text{and}\quad 
\Uhat_{\a,n} := \left\{ \prod_{k=n}^\infty \exp(\rho(x_k)) \mid x_k\in\fg_{\a,k}\right\}.$$

Note that we have one-parameter subgroups
$$\exp(\rho(\C b_j)):= \{\exp(\rho(t b_j))\mid t\in\C\}\cong\C^+,$$
and write 
$$U_\a:= \left\langle \exp(\rho(\C b_j)) \mid j=1,\dots,m\right\rangle.$$
When $\a$ is real and  $m=1$, this is the usual definition of a real root group.
 If $\a$ is not real,  $U_\a$ may not be contained in $U^+$,  and in general is not independent of the choice of basis.  Hence this definition of $U_\a$ is not a suitable candidate for a root group when $\a$ is imaginary.

The following proposition establishes that $\wh{U}_{\a,1}$ coincides with the root group $\wh{U}_\a$ of Definition~\ref{rootalgebra}.
\begin{theorem}\label{T-Uhat} Let $\a\in\Delta$ and let $b_1,\dots,b_m$ be a basis for $\fg_\a$.
Suppose $\a$ is positive or $(\a,\a)\ge0$.
\begin{enumerate}
%\item\label{T-Uhat-nontriv} $\wh{U}_\a\ne1$ if and only if $\a\in\Span_\Z(\wts(V))$.
\item\label{T-Uhat-nhat} The root group  $\wh{U}_\a=\wh{U}_{\a,1}=\exp(\rho(\wh\fn_\a))$.
\item\label{T-Uhat-closure} $\wh{U}_\a$ is the closure in $\wh{U}$ of $U_\a$.
\item\label{T-Uhat-prounip} $\wh{U}_\a$ is a pro-unipotent completion of $U_\a$ with  series
$$\wh{U}_\a=\wh{U}_{\a,1}\ge \wh{U}_{\a,2} \ge\cdots.$$
\item\label{T-Uhat-abel} If $(\a,\a)\ge0$, then $\wh{U}_\a=U_\a$ is the direct product of  
$\exp(\rho(\C b_i))$ for $i=1,\dots,m$.
\item\label{T-Uhat-free} If $\a$ is positive and $(\a,\a)<0$, then $\wh{U}_\a$ is isomorphic to the Magnus group  $M(b_1,\dots,b_m)$. 

\item\label{CompleteProSum} The elements of $\wh{U}_\a$ are pro-summable series acting on $\widehat{V}$.

\item\label{T-Uhat-w} Let $w\in W$. Then $\wt{w} \wh{U}_\a \wt{w}^{-1}= \wh{U}_{w\a}$.
\end{enumerate}
\end{theorem}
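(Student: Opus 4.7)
The plan would be to deduce each of the seven assertions from the general machinery already developed earlier in the paper. The keystone would be Theorem~\ref{P-gen}: taking $X = \{b_1,\dots,b_m\}$, the subalgebra of $\fn^+$ generated by $X$ is exactly $\fn_\a$ by Theorem~\ref{P-nalpha}(\ref{P-nalpha-sub}), and its closure in $\wh\fn^+$ is $\wh\fn_\a$ by Theorem~\ref{P-nhat}(\ref{P-nhat-closure}). Hence Theorem~\ref{P-gen} would immediately give $\wh U_\a = \exp(\rho(\wh\fn_\a))$, which establishes both the closure statement (\ref{T-Uhat-closure}) and half of (\ref{T-Uhat-nhat}). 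For the equality $\wh U_{\a,1} = \exp(\rho(\wh\fn_\a))$ in (\ref{T-Uhat-nhat}), I would recycle the iterated BCH argument \refBCH{} used in Proposition~\ref{L-nilpexp}(\ref{L-nilpexp-Uhat}), rewriting any $x = \sum_{k\ge1} x_k \in \wh\fn_\a$ with $x_k \in \fg_{\a,k}$ as $\exp(\rho(x)) = \prod_{k\ge1} \exp(\rho(x_k))$ modulo a tail in $\wh\fn_{\a,N+1}$ that converges to $0$ as $N\to\infty$.

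For (\ref{T-Uhat-prounip}), I would observe that the subgroups $\wh U_{\a,n} = \exp(\rho(\wh\fn_{\a,n}))$ form a descending chain of closed subgroups of $\wh U_\a$ whose intersection is trivial (via $\bigcap_n \wh\fn_{\a,n} = 0$ together with the homeomorphism in Lemma~\ref{homeo}); since $\wh U_\a$ embeds as a closed subgroup of the pro-unipotent group $\wh U^+_{V,1}$ (Theorem~\ref{T-pro-unip}), it is itself pro-unipotent with the asserted series. Part (\ref{T-Uhat-abel}) would be immediate from Theorem~\ref{P-nhat}(\ref{P-nhat-abel}): $\wh\fn_\a = \fg_\a$ is abelian, so exponentiation turns the direct sum decomposition $\fg_\a = \bigoplus_i \C b_i$ into a direct product of one-parameter subgroups, coinciding with $U_\a$. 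Part (\ref{CompleteProSum}) is then a direct invocation of Proposition~\ref{P-formulas}: every element of $\wh U_\a$ has the form $\exp(\rho(x))$ for some $x \in \wh\fn_\a \subseteq \wh\fn^+$ and is therefore pro-summable on $\wh V$.

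Part (\ref{T-Uhat-free}) would combine Theorem~\ref{P-nhat}(\ref{P-nhat-free}), which identifies $\wh\fn_\a$ with the pro-nilpotent completion $\wh{\mathfrak F}(b_1,\dots,b_m)$ of the free Lie algebra, with the standard Magnus correspondence recalled in Subsection~\ref{FreeMagnus}: the exponential inside $\C\llangle b_1,\dots,b_m\rrangle$ is a bijection between $\wh{\mathfrak F}(b_1,\dots,b_m)$ and the Magnus group $M(b_1,\dots,b_m)$, compatible with the group structure through BCH. Faithfulness of $V$ together with Lemma~\ref{homeo} would then let me transport this isomorphism through $\rho$ to an isomorphism $\wh U_\a \cong M(b_1,\dots,b_m)$. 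Finally, (\ref{T-Uhat-w}) would follow from the equality $\wt w \cdot \wh\fn_\a = \wh\fn_{w\a}$ from Theorem~\ref{P-nhat}(\ref{P-nhat-w}), combined with the intertwining identity $\wt w \exp(\rho(x)) \wt w^{-1} = \exp(\rho(\wt w \cdot x))$ for $x\in\wh\fn_\a$.

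I expect the main obstacle to be the careful verification of (\ref{T-Uhat-free}): one must confirm that the abstract Magnus correspondence inside the universal enveloping algebra $\C\llangle b_1,\dots,b_m\rrangle$ really does transfer faithfully through the representation $\rho$ to give an isomorphism of topological groups with $\wh U_\a$, since by construction the exponential on $\wh U_\a$ is defined via its action on $\wh V$ rather than inside an abstract enveloping algebra. Once this identification of ambient completions is made rigorous, invoking faithfulness of $V$ on each finite-order truncation, the remaining parts reduce to bookkeeping through the results already established in Sections~\ref{prounipgp} and~\ref{S-rtgp}.
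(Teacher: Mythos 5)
Your proposal is correct and follows essentially the same route as the paper: parts (\ref{T-Uhat-nhat}) and (\ref{T-Uhat-closure}) from Theorem~\ref{P-gen}, (\ref{T-Uhat-abel}) and (\ref{T-Uhat-free}) from Theorem~\ref{P-nhat} together with the Magnus/free-group material of Subsection~\ref{FreeMagnus}, (\ref{CompleteProSum}) from the pro-summability results of Section~\ref{prounipgp}, and (\ref{T-Uhat-w}) from Theorem~\ref{P-nhat}(\ref{P-nhat-w}). The extra details you supply (the iterated BCH argument identifying $\wh{U}_{\a,1}$ with $\exp(\rho(\wh\fn_\a))$, and transporting the Magnus correspondence through $\rho$ via faithfulness and Lemma~\ref{homeo}) simply make explicit what the paper's terse proof leaves implicit.
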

\begin{proof}
    (\ref{T-Uhat-nhat}) and~(\ref{T-Uhat-closure}) follow from Proposition~\ref{P-gen}.
    (\ref{T-Uhat-prounip}) follows from Section~\ref{FreeMagnus}.
    (\ref{T-Uhat-abel}) follows from Theorem~\ref{P-nhat}(\ref{P-nhat-abel}).
    (\ref{T-Uhat-free}) follows from Theorem~\ref{P-nhat}(\ref{P-nhat-free}) and Section~\ref{FreeMagnus}. (\ref{CompleteProSum}) follows from Section~\ref{prounipgp}.
    (\ref{T-Uhat-w}) follows from~(\ref{T-Uhat-nhat}) and Theorem~\ref{P-nhat}(\ref{P-nhat-w}).
\end{proof}

%{\color{red}Is $U\cap \wh{U}_\a$ ever nontrivial for $\a$ imaginary? Example.}

%%%%%%%%%%%%%%%%%%%%%%%%%%%%%%%%%%%%%%%%%%%%%%%%%%%%%
%%%%%%%%%%%%%%%%%%%%%%%%%%%%%%%%%%%%%%%%%%%%%%%%%%%%%%
\bibliographystyle{amsalpha}
\bibliography{GPmath}{}

\end{document}